\newtheorem{lem}{Lemma}
\newtheorem{thm}{Theorem}
\DeclareMathOperator{\Tr}{Tr}
\begin{document}
 \title[Large gaps]{Large gaps of  CUE and GUE}

\author[Feng and Wei]{Renjie Feng and Dongyi Wei}
 \address{Sydney Mathematical Research Institute, The University of Sydney.}
\email{renjie.feng@sydney.edu.au}
\address{School of Mathematical Sciences, Peking University, Beijing, China, 100871.}

 \email{jnwdyi@pku.edu.cn}

\date{\today}
   \maketitle
	\begin{abstract}		In this article, we study the largest gaps of the classical random matrices of CUE and GUE, 
and show that after rescaling, the  limiting densities  are given by  the Gumbel distributions.
	\end{abstract} 
\section{Introduction}\label{intro}
	In random matrix theory, the typical spacings between eigenvalues of classical random matrices have been well understood for a long time \cite{AGZ, DG}. But there are only few results known for  the extremal spacings.
	The rescaling limits of the smallest gaps of CUE and GUE  (where the point processes of  eigenvalues are both determinintal point processes) were proved by Vinson and he also suggest the decay order of the largest gap \cite{V}.   Later on, Soshnikov studied the smallest gaps   for the general determinantal point processes with translation invariant kernels \cite{So}, and proved that the point processes of the smallest gaps after rescaling are asymptotic to the Poisson point processes. In \cite{BB}, Ben Arous-Bourgade derived the smallest gaps for CUE and GUE, and they further proved the decay order of the largest gap for these two ensembles which confirmed Vinson's prediction.  The proofs in \cite{BB, So, V} highly depend on the determinantal structures of the point processes.  For the point processes without determinantal structures, in  \cite{FW}, we developed a new technique based on the Selberg integral to prove the smallest gaps for the circular log-gas $\beta$-ensemble for any positive integer $\beta$. As special cases, our result implies the limiting distributions of the smallest gaps of the classical random matrices of COE, CUE and CSE. The same technique is further applied to GOE \cite{FTW}. Recently, a completely different approach has been employed to address the smallest gaps for GSE, leveraging its Pfaffian structure \cite{FLY}. 
	
	In this paper, we will derive the rescaling limits of the largest gaps of CUE and GUE by their determinantal structures. The main results are that the laws of the rescaling limits of the $k$-th largest gaps are given by  the Gumbel distributions for any fixed positive integer $k$. These results are further shown to be universal for general Wigner matrices \cite{SB, LLM}. 
	
	To state our results, let's first consider CUE. Let $u_n$ be a Haar-distributed  unitary
	matrix $U(n)$ over $\mathbb{C}^n$.  Suppose $u_n$ has eigenvalues $e^{i\theta_k}$'s with ordered eigenangles
	$0< \theta_1 < \cdots < \theta_n < 2\pi$. Let $m_1>m_2>\cdots$ be the largest gaps between successive eigenangles
	of $u_n$ i.e., $m_k\ (1\leq k\leq n)$ is the decreasing rearrangement of $\theta_{k+1}-\theta_k\ (1\leq k\leq n)$ with
	$ \theta_{k+n}=\theta_k+2\pi.$  In  \cite{BB},   Ben Arous-Bourgade proved that for any $p>0$ and $l_n=n^{o(1)},$  the following limit holds:
	$$\frac{nm_{l_n}}{\sqrt{32\ln n}}\buildrel\hbox{\footnotesize $L^p$}\over\longrightarrow 1.$$
	In this article, we  further derive the rescaling limit law for $m_k$ as follows.
	\begin{thm}\label{thm1}Let's denote $m_k$ as the $k$-th largest gap of CUE and $$ \tau_k=(2\ln n)^{\frac{1}{2}}(nm_k-(32\ln n)^{\frac{1}{2}})/4-(3/8)\ln(2\ln n),$$ then 
	the number of the rescaling point process $\{\tau_k\}_{k=1}^n$ falling in $[x,+\infty)$ tends to a Poisson random variable with mean $e^{c_1-x}$ for any fixed $x\in\mathbb R$. Here, $c_1=c_0+\ln(\pi/2) $ where $c_0=\frac1{12}\ln 2+3\zeta'(-1)$ and $\zeta(x)$ is the Riemann zeta function.  This further implies that,  for any bounded interval $ I\subset\mathbb{R}$ and any fixed positive integer $k$,  the limiting density for the $k$-th largest gap is given by the Gumbel distribution,
		$$\lim_{n\to+\infty}\mathbb P(\tau_k\in I)=\int_I\frac{e^{k(c_1-x)}}{(k-1)!} e^{-e^{c_1-x}}dx.$$ In particular, the limiting density for the largest gap $\tau_1$ is
		$$e^{c_1-x} e^{-e^{c_1-x}}.$$
		
	\end{thm}
	Let's sketch the main ideas to prove Theorem \ref{thm1}.  First, by the uniform asymptotic expansion \eqref{1} of the gap probability for a given arc of the circle to be free of eigenvalues,  we can find the correct rescaling formula for the largest gap $m_k$ and our crucial observation  is  the  rescaling limit  \eqref{2} in Lemma \ref{obs}.  For a sequence of decreasing point processes, we will first prove Lemma \ref{lem1} which provides a  criterion for the convergence of the number of points falling in $[x, +\infty)$   to a Poisson random variable.  Lemma \ref{lem1} implies that  Theorem \ref{thm1} will be proved by the upper bound \eqref{13} and the lower bound \eqref{14}. The upper bound can be proved by the negative association property of the determinantal point processes.
	The lower bound is  the most essential part of the whole proof, which is based on the  asymptotic splitting formula \eqref{42} for the gap probabilities in Lemma \ref{splt}. The proof of Lemma \ref{splt}  is further  based on Lemma \ref{lem15} and Lemma \ref{lem18} for the eigenvalue estimates of some symmetric operators.
	

	For GUE,  the joint density of the eigenvalues is \begin{align}\label{6}\frac{1}{Z_n}e^{-n\sum\limits_{i=1}^n\lambda_i^2/2}\prod_{1\leq i<j\leq n}|\lambda_i-\lambda_j|^2\end{align}with respect to the Lebesgue product measure  on the simplex $ \lambda_1<\cdots <\lambda_n.$ And the empirical spectral distribution   converges in probability to the
	semicircle law \cite{AGZ}\begin{align*}\rho_{sc}(x)=\sqrt{(4-x^2)_+}/(2\pi),\end{align*}where we denote $f_+:=\max(f,0).$
	
	For the largest gaps of GUE, the result is completely different inside the bulk and on the edge of the semicircle law. On the edge,  the largest gap is of order $n^{-2/3}$ which is indicated by the Tracy-Widom law \cite{AGZ}; while inside the bulk, the largest gap is of order $\sqrt{\log n}/n$ \cite{BB, V}. To be more precise,  given $I=[a,b] $ which is a compact subinterval of $(-2,2)$, let $m_1^*>m_2^*>\cdots$ be the largest gaps
	of type $ \lambda_{i+1}- \lambda_{i}$ with $\lambda_{i+1}, \lambda_{i}\in I$,  then Ben Arous-Bourgade \cite{BB} showed that for any $p>0$ and $l_n=n^{o(1)},$ the following limit holds: 
	$$\left(\inf_I\sqrt{4-x^2} \right)\frac{nm_{l_n}^*}{\sqrt{32\ln n}}\buildrel\hbox{\footnotesize $L^p$}\over\longrightarrow 1.$$
	Regarding the GUE case, we have \begin{thm}\label{thm2} Given  $I=[a,b] $ which is a compact subinterval of $(-2,2)$, let $m_k^*$ be the $k$-th largest gap of GUE falling in $I$,  we denote $S(I)=\inf_I\sqrt{4-x^2}$ and $$ \tau_k^*=(2\ln n)^{\frac{1}{2}}(nS(I)m_k^*-(32\ln n)^{\frac{1}{2}})/4+(5/8)\ln(2\ln n),$$ then 
		 the number of the rescaling point process
		$\{\tau_k^*\}$  falling in $[x,+\infty)$ tends to a Poisson random variable with mean $e^{c_2-x}$ for any fixed $x$. Here, the constant $c_2=c_0+M_0(I)$ depending on $I$, where $c_0=\frac1{12}\ln 2+3\zeta'(-1)$ and $M_0(I)=(3/2)\ln(4-a^2)-\ln(4|a|)$ if  $a+b<0,$ $M_0(I)=(3/2)\ln(4-b^2)-\ln(4|b|)$ if $a+b>0$ and $M_0(I)=(3/2)\ln(4-a^2)-\ln(2|a|)$ if $a+b=0$.	This further implies that, for any bounded interval $ I_1\subset\mathbb{R}$, the limiting density for the $k$-th largest gap falling in $I$ is given by  the Gumbel distribution,
		$$\lim_{n\to+\infty}\mathbb P(\tau_k^*\in I_1)=\int_{I_1}\frac{e^{k(c_2-x)}}{(k-1)!} e^{-e^{c_2-x}}dx.$$  In particular, the limiting density for the largest gap $\tau_1^*$ is
		$$e^{c_2-x} e^{-e^{c_2-x}}.$$
		
	\end{thm}
	
	Note that the constant $M(I)$ depends on whether or not $I$ is a symmetric subinterval about the origin, as the semicircle law is symmetric around the origin.
		
	The starting point to prove Theorem \ref{thm2} is  the observation  \eqref{21} in Lemma \ref{another}, which is another rescaling limit regarding the gap probability  for CUE.  Similar to the CUE case, we still need to prove the upper bound \eqref{33} and lower bound \eqref{34}.  And another key ingredient to prove the GUE case is the comparisons of the kernels and the Fredholm determinants between CUE and GUE in the proofs of Lemma \ref{lem23} and Lemma \ref{lem25}.

	As a final remark,   the determinantal
structures of the CUE and GUE play a crucial role in the analyses presented in this paper and in \cite{BB}. It is worth
noting that even the decay orders of the largest gaps for other ensembles,
such as the COE and CSE which have Pfaffian structures, remain unknown. In fact, let $m_{\beta}$ be the largest gaps of C$\beta$E where $\beta>0$.  Indicated by  the large gap probability of Theorem 5 in \cite{VV}, we propose the following conjecture: $$nm_{\beta}/\sqrt{\frac{64}\beta \ln n}\buildrel\hbox{\footnotesize $L^p$}\over\longrightarrow 1,$$
for any $p>0$ as $n\to+\infty$. 
If  this conjecture can be proven, then one can further try to find the rescaling limits of the largest gaps of C$\beta$E, which are expected to follow Gumbel distributions as well.  


	\section{A criterion for the Poisson convergence}One of the key ingredients is the following lemma for general decreasing point processes.
	\begin{lem}\label{lem1}Let $\chi^{(n)}=\sum\limits_{k=1}^{k_n}\delta_{\tau_k^{(n)}} $ be a sequence of point processes
		on $ \mathbb{R}$ such that the sequence $\tau_k^{(n)}\ (1\leq k\leq k_n)$ is decreasing for every fixed $n$, $f\in C^2(\mathbb{R})$ satisfies $f(x)>0,\ f'(x)<0,\ f''(x)>0$ for $x\in\mathbb{R}$ and $\lim\limits_{x\to+\infty}f'(x)=0.$ Assume that for every positive integer $k$ and $x_1,\cdots,x_k\in \mathbb{R}$, we have\begin{align}\label{43}&\lim_{n\to+\infty}\mathbb{E}\sum_{i_1,\cdots,i_{k}\ \text{all distinct}}\prod_{j=1}^k(\tau_{i_j}^{(n)}-x_j)_+=\prod_{j=1}^kf(x_j).
		\end{align}
		Then for $A=(x,+\infty)$ or $A=[x,+\infty)$, we have the convergence \begin{align}\label{44}{\chi}^{(n)}(A)\buildrel\hbox{\footnotesize law}\over\longrightarrow{\chi}(A),
		\end{align}where ${\chi}(A) $ is a Poisson random variable with mean $-f'(x).$
		Furthermore, for any bounded interval $ I\subset\mathbb{R}$, we have the limiting distribution,
		$$\lim_{n\to+\infty}\mathbb P(\tau_k^{(n)}\in I)=\int_I\frac{f''(x)(-f'(x))^{k-1}}{(k-1)!} e^{f'(x)}dx.$$ Here, we denote $\tau_k^{(n)}=-\infty $ for $k>k_n.$\end{lem}
	
	\begin{proof}For $a<b,\ x\in \mathbb{R}$, we simply have $$(b-a)\mathds{1}_{\{x\geq b\}}\leq(x-a)_+-(x-b)_+\leq (b-a)\mathds 1_{\{x>a\}},$$then for $a_1<a_{-1}$, we have \begin{align*}(a_{-1}-a_1)^k\prod_{j=1}^k\mathds 1_{\{\tau^{(n)}_{i_j}\geq a_{-1}\}}\leq\prod_{j=1}^k((\tau^{(n)}_{i_j}-a_{1})_+-(\tau^{(n)}_{i_j}-a_{-1})_+)
			\\ =\sum_{\varepsilon_1,\cdots,\varepsilon_{k}\in\{\pm1\}}\prod_{j=1}^k\varepsilon_{j}(\tau^{(n)}_{i_j}-a_{\varepsilon_{j}})_+
			\leq(a_{-1}-a_1)^k\prod_{j=1}^k\mathds 1_{\{\tau^{(n)}_{i_j}>a_{1}\}}.\end{align*}We denote $$ \rho^{(n,k)}=\sum_{i_1,\cdots,i_{k}\ \text{all distinct}}\delta_{\tau_{i_1}^{(n)},\cdots,\tau_{i_k}^{(n)}},$$  then we have $$\rho^{(n,k)}(A^k)=\dfrac{({\chi}^{(n)}(A))!}{({\chi}^{(n)}
			(A)-k)!}$$ for every interval $A\subset\mathbb{R}$. By taking summation over distinct points, we have \begin{align*}&(a_{-1}-a_1)^k\rho^{(n,k)}([a_{-1},+\infty)^k)
			\\ \leq&\sum_{i_1,\cdots,i_{k}\ \text{all distinct}}\sum_{\varepsilon_1,\cdots,\varepsilon_{k}\in\{\pm1\}}\prod_{j=1}^k\varepsilon_{j}(\tau^{(n)}_{i_j}-a_{\varepsilon_{j}})_+
			\\ \leq&(a_{-1}-a_1)^k\rho^{(n,k)}((a_{1},+\infty)^k).\end{align*}Using \eqref{43}, taking expectation and the limit, we have\begin{align*}&(a_{-1}-a_1)^k\limsup_{n\to+\infty}\mathbb{E}\rho^{(n,k)}([a_{-1},+\infty)^k)\\ \leq&\sum_{\varepsilon_1,\cdots,\varepsilon_{k}\in\{\pm1\}}\lim_{n\to+\infty}\mathbb{E}\sum_{i_1,\cdots,i_{k}\ \text{all distinct}}\prod_{j=1}^k\varepsilon_{j}(\tau^{(n)}_{i_j}-a_{\varepsilon_{j}})_+
			\\=&\sum_{\varepsilon_1,\cdots,\varepsilon_{k}\in\{\pm1\}}\prod_{j=1}^k\left(\varepsilon_{j}f(a_{\varepsilon_{j}})\right)=
			\left(f(a_{1})-f(a_{-1})\right)^k\\ \leq&(a_{-1}-a_1)^k\liminf_{n\to+\infty}\mathbb{E}\rho^{(n,k)}((a_{1},+\infty)^k).
		\end{align*}For every $ x\in\mathbb{R}$ and $ \delta>0,$ taking $ (a_1,a_{-1})=(x,x+\delta)$ and $ (a_1,a_{-1})=(x-\delta,x)$, we will  have\begin{align*}((f(x)-f(x+\delta))/\delta)^k\leq\liminf_{n\to+\infty}
			\mathbb{E}\rho^{(n,k)}((x,+\infty)^k)\\ \leq\limsup_{n\to+\infty}\rho^{(n,k)}([x,+\infty)^k)\leq
			((f(x-\delta)-f(x))/\delta)^k.
		\end{align*}Letting $ \delta\to 0+$ and using $\rho^{(n,k)}((x,+\infty)^k)\leq\rho^{(n,k)}([x,+\infty)^k) $, we have the following  convergence of the
		factorial moments, \begin{align*}\lim_{n\to+\infty}
			\mathbb{E}\dfrac{({\chi}^{(n)}(A))!}{({\chi}^{(n)}
				(A)-k)!}=\lim_{n\to+\infty}
			\mathbb{E}\rho^{(n,k)}(A^k)=(-f'(x))^k,
		\end{align*}where $A=(x,+\infty)$ or $A=[x,+\infty), $ 
		which implies the convergence of \eqref{44}.
		
		Now for every $k\geq 0,\ k\in\mathbb{Z}$, we have\begin{align*}\lim_{n\to+\infty}\mathbb{P}(\chi^{(n)}(A)=k)=\mathbb{P}(\chi(A)=k)=\left(-f'(x)\right)^ke^{f'(x)}/k!.
		\end{align*}Therefore, for $A=(x,+\infty)$ or $A=[x,+\infty)$, we have\begin{align}\label{45}\lim_{n\to+\infty}\mathbb{P}(\tau_k^{(n)}\in A)=\lim_{n\to+\infty}\mathbb{P}(\chi^{(n)}(A)\geq k)=\mathbb{P}(\chi(A)\geq k)=\varphi_k\left(-f'(x)\right),
		\end{align} where \begin{align*}\varphi_k\left(\lambda\right)=1-\sum_{j=0}^{k-1}\frac{\lambda^j}{j!}e^{-\lambda},
		\end{align*} thus \begin{align*}\ \varphi_k(0)=0,\ \varphi_k'\left(\lambda\right)=-\sum_{j=1}^{k-1}\frac{\lambda^{j-1}}{(j-1)!}e^{-\lambda}+
			\sum_{j=0}^{k-1}\frac{\lambda^j}{j!}e^{-\lambda}=\frac{\lambda^{k-1}}{(k-1)!}e^{-\lambda}\end{align*}and
		\begin{align*}\varphi_k\left(\lambda\right)=\int_0^{\lambda}\varphi_k'\left(s\right)ds=
			\int_0^{\lambda}\frac{s^{k-1}}{(k-1)!}e^{-s}ds.\end{align*}Changing variables $s=-f'(x)$, we have
		\begin{align}\label{46}\varphi_k\left(-f'(a)\right)=
			\int_0^{-f'(a)}\frac{s^{k-1}}{(k-1)!}e^{-s}ds=
			\int_a^{+\infty}\frac{f''(x)(-f'(x))^{k-1}}{(k-1)!}e^{f'(x)}dx\end{align}for every $a\in\mathbb{R}.$ Now for any bounded interval $ I\subset\mathbb{R}$, we can write $I=(a,b)$ or $I=(a,b]$ or $I=[a,b)$ or $I=[a,b]$ where $a<b$, thus $I=A_1\setminus A_2$ with $A_1=(a,+\infty)$ or $A_1=[a,+\infty)$ and $A_2=(b,+\infty)$ or $A_2=[b,+\infty)$, and by \eqref{45} and \eqref{46} we have
		\begin{align*}\lim_{n\to+\infty}\mathbb{P}(\tau_k^{(n)}\in I)=\lim_{n\to+\infty}\mathbb{P}(\tau_k^{(n)}\in A_1)-
			\lim_{n\to+\infty}\mathbb{P}(\tau_k^{(n)}\in A_2)\\=\varphi_k\left(-f'(a)\right)-\varphi_k\left(-f'(b)\right)=
			\int_a^{b}\frac{f''(x)(-f'(x))^{k-1}}{(k-1)!}e^{f'(x)}dx.
		\end{align*}This completes the proof.\end{proof}

	\section{The CUE case}
	\subsection{A rescaling limit}
	For CUE, the gap probability of having no eigenvalue in an arc of size $2\alpha$ is
	equal to the Toeplitz determinant $$D_n(\alpha)=\det_{1\leq j,k\leq n}\left(\frac{1}{2\pi}\int_{\alpha}^{2\pi-\alpha}e^{i(j-k)\theta}d\theta\right).$$ All the asymptotics we need  are direct consequences of
	the precise analysis of $D_n(\alpha) $ given by Deift et al. \cite{DIKZ}. More precisely they proved that for some sufficiently large $s_0$ and any $ \varepsilon> 0$,
	uniformly in $s_0/n<\alpha<\pi-\varepsilon,$ one has  \begin{align}\label{1}\ln D_n(\alpha)=n^2\ln\cos\frac{\alpha}{2}-\frac{1}{4}\ln\left(n\sin\frac{\alpha}{2}\right)+c_0+O\left(\frac{1}{n\sin(\alpha/2)}\right),
	\end{align}
	here $c_0=\frac 1{12}\ln 2+3\zeta'(-1)$ where $\zeta(x)$ is the Riemann zeta function. 
	
	We denote\begin{equation}\label{fx}F_n(x)=\frac{8x+3\ln(2\ln n)}{2n(2\ln n)^{\frac{1}{2}}}+\frac{(32\ln n)^{\frac{1}{2}}}{n},\end{equation} then we have $$m_k=F_n(\tau_k),$$
	where $m_k$ and $\tau_k$ are as defined in Theorem \ref{thm1}.
	
	From the definition of $F_n(x) $, we have\begin{align}\label{10}\tau_k-x=(F_n(\tau_k)-F_n(x))(n/4)(2\ln n)^{\frac{1}{2}}=(m_k-F_n(x))(n/4)(2\ln n)^{\frac{1}{2}}, \end{align} and for every fixed $x,$ we have \begin{align}\label{11}\lim\limits_{n\to+\infty}\dfrac{n F_n(x)}{(32\ln n)^{\frac{1}{2}}}=1,\ \lim\limits_{n\to+\infty}n F_n(x)=+\infty,\ \lim\limits_{n\to+\infty}n^{\gamma} F_n(x)=0,\ \forall\ \gamma<1.\end{align}For every fixed $ \alpha\in(0,\pi),$ by \eqref{1} we have\begin{align}\label{12}&\lim_{n\to+\infty}(n/4)(2\ln n)^{\frac{1}{2}}D_n(\alpha)=0.
	\end{align}
	Another important consequence of \eqref{1} is the following rescaling limit
	\begin{lem}\label{obs}\begin{align}\label{2}\lim_{n\to+\infty}n(2\ln n)^{\frac{1}{2}} D_n(F_n(x)/2)=e^{c_0-x}.\end{align}\end{lem}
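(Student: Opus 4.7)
The plan is to substitute $\alpha = F_n(x)/2$ directly into the Deift--Its--Krasovsky--Zhou expansion \eqref{1} and show that all the $\ln n$ terms cancel to leave exactly $c_0 - x - \ln\bigl(n(2\ln n)^{1/2}\bigr) + o(1)$. By \eqref{11}, we have $nF_n(x) \to +\infty$ and $F_n(x) \to 0$, so for every fixed $x$ the value $\alpha = F_n(x)/2$ lies in the range $s_0/n < \alpha < \pi - \varepsilon$ once $n$ is large, and \eqref{1} is applicable. The task is then to compute each of the three pieces $n^2\ln\cos(\alpha/2)$, $-\tfrac14\ln(n\sin(\alpha/2))$, and the error.

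The main step is to expand $n^2\ln\cos(F_n(x)/4)$. Using $\ln\cos y = -y^2/2 - y^4/12 + O(y^6)$ with $y = F_n(x)/4$, the leading contribution is $-n^2 F_n(x)^2/32$, and the $y^4$ correction is $O((\ln n)^2/n^2) = o(1)$ by \eqref{11}. Writing $nF_n(x) = A + B$ with
\[
A = (32\ln n)^{1/2}, \qquad B = \frac{8x + 3\ln(2\ln n)}{2(2\ln n)^{1/2}},
\]
one gets $A^2 = 32\ln n$, $2AB = 32x + 12\ln(2\ln n)$, and $B^2 = O((\ln\ln n)^2/\ln n)$, so
\[
n^2\ln\cos(F_n(x)/4) = -\ln n - x - \tfrac{3}{8}\ln(2\ln n) + o(1).
\]

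For the second piece I use $\sin(F_n(x)/4) = (F_n(x)/4)\bigl(1 + O(F_n(x)^2)\bigr)$ and $nF_n(x)/4 = (2\ln n)^{1/2}\bigl(1 + O(\ln\ln n/\ln n)\bigr)$, giving
\[
-\tfrac14\ln\bigl(n\sin(F_n(x)/4)\bigr) = -\tfrac18\ln(2\ln n) + o(1).
\]
The error term in \eqref{1} is $O(1/(n\sin(\alpha/2))) = O(1/(2\ln n)^{1/2}) = o(1)$. Adding the three contributions and $c_0$, the $\tfrac{3}{8}$ and $\tfrac{1}{8}$ log-log terms combine to $\tfrac{1}{2}\ln(2\ln n)$, yielding
\[
\ln D_n(F_n(x)/2) = -\ln n - \tfrac12\ln(2\ln n) + c_0 - x + o(1),
\]
and exponentiation finishes the proof.

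The only delicate point is tracking the sub-leading constant in $n^2 F_n(x)^2/32$: the definition of $F_n$ is rigged precisely so that the cross term $2AB$ produces exactly $32x + 12\ln(2\ln n)$, which combines with the $-\tfrac14\ln(n\sin(\alpha/2))$ piece to cancel the $\ln(2\ln n)$ dependence in just the right way. Everything else is routine Taylor expansion, and the validity of \eqref{1} in the relevant range is guaranteed by \eqref{11}.
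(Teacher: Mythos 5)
Your proof is correct and follows essentially the same route as the paper: substitute $\alpha=F_n(x)/2$ into the Deift--Its--Krasovsky--Zhou asymptotic \eqref{1}, use \eqref{11} to justify applicability and to show the error term vanishes, Taylor-expand $n^2\ln\cos(\alpha/2)$ so the cross term $2AB/32$ produces $x+\tfrac{3}{8}\ln(2\ln n)$, and match this with $-\tfrac14\ln(n\sin(\alpha/2))=-\tfrac18\ln(2\ln n)+o(1)$. Your $A+B$ bookkeeping for $nF_n(x)$ is just a cosmetic repackaging of the paper's direct expansion of $n^2F_n^2(x)/32$; the substance is identical.
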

	\begin{proof}Let $\alpha_n=F_n(x)/2,$ then by \eqref{11} we have $\alpha_n\to 0,\ n\alpha_n\to+\infty$ as $ n\to+\infty,$ thus $s_0/n<\alpha_n<\pi-\varepsilon $ for $n$ sufficiently large, and\begin{equation}\label{dds}\lim\limits_{n\to+\infty}\frac{1}{n\sin(\alpha_n/2)}=\lim\limits_{n\to+\infty}\frac{2}{n\alpha_n}
			\lim\limits_{n\to+\infty}\frac{\alpha_n/2}{\sin(\alpha_n/2)}=0.\end{equation}Thus, by \eqref{1} we have\begin{equation}\label{3}\lim\limits_{n\to+\infty}\left(\ln D_n(\alpha_n)-n^2\ln\cos\frac{\alpha_n}{2}+\frac{1}{4}\ln\left(n\sin\frac{\alpha_n}{2}\right)-c_0\right)=0.
		\end{equation}By \eqref{11} we have\begin{align*}\lim\limits_{n\to+\infty}\frac{(2\ln n)^{\frac{1}{2}}}{n\sin(\alpha_n/2)}&=\lim\limits_{n\to+\infty}\frac{(2\ln n)^{\frac{1}{2}}}{n\alpha_n/2}
			\lim\limits_{n\to+\infty}\frac{\alpha_n/2}{\sin(\alpha_n/2)}=\lim\limits_{n\to+\infty}\frac{(2\ln n)^{\frac{1}{2}}}{n\alpha_n/2}\\&=\lim\limits_{n\to+\infty}\frac{(2\ln n)^{\frac{1}{2}}}{nF_n(x)/4}=\lim\limits_{n\to+\infty}\frac{(32\ln n)^{\frac{1}{2}}}{nF_n(x)}=1,\end{align*} and thus we have \begin{align}\label{4}\lim\limits_{n\to+\infty}\left(\frac{1}{8}\ln (2\ln n)-\frac{1}{4}\ln\left(n\sin\frac{\alpha_n}{2}\right)\right)=0.
		\end{align}By \eqref{11} and  the Taylor expansion $\ln \cos y=-y^2/2+O(y^4)$ as $y\to 0$, we have \begin{align*}n^2\ln\cos\frac{\alpha_n}{2}+\frac{n^2\alpha_n^2}{8}=n^2O(\alpha_n^4)=n^2O(F_n^4(x))=O\left((n^{1/2} F_n(x))^4\right)\to 0,\end{align*} and\begin{align*}\frac{n^2\alpha_n^2}{8}=\frac{n^2F_n^2(x)}{32}&=\frac{32\ln n}{32}+\frac{8x+3\ln(2\ln n)}{(2\ln n)^{\frac{1}{2}}}\frac{(32\ln n)^{\frac{1}{2}}}{32}+\frac{(8x+3\ln(2\ln n))^2}{32\cdot4\cdot(2\ln n)}\\&=\ln n+\frac{8x+3\ln(2\ln n)}{8}+o(1)\end{align*}as $n\to+\infty$, which implies\begin{align}\label{5}\lim\limits_{n\to+\infty}\left(n^2\ln\cos\frac{\alpha_n}{2}+\ln n+x+\frac{3\ln(2\ln n)}{8}\right)=0.
		\end{align}By \eqref{3}\eqref{4}\eqref{5}, we have\begin{align*}\lim\limits_{n\to+\infty}\left(\ln D_n(\alpha_n)+\ln n+x+\frac{\ln(2\ln n)}{2}-c_0\right)=0,\end{align*}and thus we have \begin{align*}\lim\limits_{n\to+\infty}\ln\left(n(2\ln n)^{\frac{1}{2}} D_n(\alpha_n)\right)= c_0-x.\end{align*}As $\alpha_n=F_n(x)/2$, we finally have\begin{align*}&\lim_{n\to+\infty}n(2\ln n)^{\frac{1}{2}} D_n(F_n(x)/2) =e^{ c_0-x},\end{align*}which completes the proof of \eqref{2}.\end{proof}

	\subsection{The strategy to prove Theorem \ref{thm1}}Now we take $c_1=c_0+\ln(\pi/2),\ f(x)=e^{c_1-x}=(2\pi)e^{c_0-x}/4,$ then we have $-f'(x)=f''(x)=e^{c_1-x}. $ Thanks to Lemma \ref{lem1}, for every positive integer $k$, $x_1,\cdots,x_k\in \mathbb{R}$,  and $\tau_{j}$ is as defined in Theorem \ref{thm1},  if we can prove the following convergence\begin{align}\label{41}&\lim_{n\to+\infty}\mathbb{E}\sum_{i_1,\cdots,i_{k}\ \text{all distinct}}\prod_{j=1}^k(\tau_{i_j}-x_j)_+
		=(2\pi)^k\prod_{j=1}^k\left(e^{c_0-x_j}/4\right),
	\end{align}then Theorem \ref{thm1} will be proved.
	
	We need to introduce some notations. For a set $A\subset\mathbb{R}$, we denote $A(\text{mod}\ 2\pi):=\{x+2\pi k|x\in A,k\in \mathbb{Z}\}\cap[0,2\pi).$ Then $I(x,a):=[x,x+a](\text{mod}\ 2\pi)$ is an arc of size $a$ for $a\in(0,2\pi)$. For $0< \theta_1 < \cdots < \theta_n < 2\pi$ and $ \theta_{k+n}=\theta_k+2\pi,$ denote $J_k(a):=\{x\in [0,2\pi)|I(x,a)\subset(\theta_k,\theta_{k+1})(\text{mod}\ 2\pi)\}$ for $a\in(0,2\pi),\ 1\leq k\leq n,$ then we have $J_k(a)=(\theta_k,\theta_{k+1}-a)(\text{mod}\ 2\pi)$ for $\theta_{k+1}-\theta_k>a $ and $J_k(a)=\emptyset$ for $\theta_{k+1}-\theta_k\leq a, $ thus $J_k(a)$ is an arc of size $(\theta_{k+1}-\theta_k-a)_+,$ moreover, $J_k(a)\subset(\theta_k,\theta_{k+1})(\text{mod}\ 2\pi) $ and $J_k(a)\cap J_l(a)=\emptyset $ for $k\neq l.$ Now let the set \begin{align*}&\Sigma_k(a_1,\cdots,a_k):=\bigcup_{i_1,\cdots,i_{k}\ \text{all distinct}}\prod_{j=1}^kJ_{i_j}(a_j)\subset[0,2\pi)^k,
	\end{align*}then this is in fact a disjoint union and\begin{align*}|\Sigma_k(a_1,\cdots,a_k)|&=\sum_{i_1,\cdots,i_{k}\ \text{all distinct}}\prod_{j=1}^k(\theta_{i_j+1}-\theta_{i_j}-a_j)_+\\ &=\sum_{i_1,\cdots,i_{k}\ \text{all distinct}}\prod_{j=1}^k(m_{i_j}-a_j)_+,
	\end{align*}here, we denote $|X|$ as the $k$-dimensional Lebesgue measure of a set $X\subset\mathbb{R}^k.$ By \eqref{11}, for every fixed $x_1,\cdots,x_k\in \mathbb{R}$, there exists $N_0>0$ such that $0<2s_0/n<F_n(x_j)<1<2\pi$ for $n>N_0,\ 1\leq j\leq k.$ Now we always assume $n>N_0.$ By \eqref{10}, we have \begin{align*}&\sum_{i_1,\cdots,i_{k}\ \text{all distinct}}\prod_{j=1}^k(\tau_{i_j}-x_j)_+
		\\=&(n/4)^k(2\ln n)^{\frac{k}{2}}\sum_{i_1,\cdots,i_{k}\ \text{all distinct}}\prod_{j=1}^k(m_{i_j}-F_n(x_j))_+\\=&(n/4)^k(2\ln n)^{\frac{k}{2}}|\Sigma_k(F_n(x_1),\cdots,F_n(x_k))|.
	\end{align*}For fixed $x_1,\cdots,x_k\in \mathbb{R}$ and  $y_1,\cdots,y_k\in [0,2\pi)$, let's denote  \begin{align*} &\phi_{k,n}(y_1,\cdots,y_k):= (n/4)^k(2\ln n)^{\frac{k}{2}}\times \\ &\mathbb{P}\Big((y_1,\cdots,y_k)\in\Sigma_k(F_n(x_1),\cdots,F_n(x_k))\Big),\end{align*} then we can rewrite \begin{align*}&\mathbb{E}\sum_{i_1,\cdots,i_{k}\ \text{all distinct}}\prod_{j=1}^k(\tau_{i_j}-x_j)_+
		\\=&\mathbb{E}(n/4)^k(2\ln n)^{\frac{k}{2}}|\Sigma_k(F_n(x_1),\cdots,F_n(x_k))|\\=&\int_{[0,2\pi)^k}\phi_{k,n}(y_1,\cdots,y_k)dy_1\cdots dy_k.
	\end{align*}Hence, \eqref{41} will be the direct consequence of the following two inequalities and the
	dominated convergence theorem: we  will prove the upper bound \begin{align}\label{13}&\limsup_{n\to+\infty}\sup_{y_1,\cdots,y_k\in [0,2\pi)}\phi_{k,n}(y_1,\cdots,y_k)\leq \prod_{j=1}^k\left(e^{c_0-x_j}/4\right);
	\end{align}  and if all $y_k$'s are distinct, then we will prove the lower bound \begin{align}\label{14}&\liminf_{n\to+\infty}\phi_{k,n}(y_1,\cdots,y_k)\geq \prod_{j=1}^k\left(e^{c_0-x_j}/4\right).
	\end{align}
	\subsection{The proof of Theorem \ref{thm1}} Let's prove Theorem \ref{thm1}.
	\subsubsection{An equivalent condition}
	We first need  the following equivalent condition for a point $(y_1,\cdots,y_k)$ in  the set $\Sigma_k(a_1,\cdots,a_k)$. \begin{lem}\label{lem2}$(y_1,\cdots,y_k)\in\Sigma_k(a_1,\cdots,a_k) $ is equivalent to the following conditions: (i) $I(y_l,a_l)\cap I(y_j,a_j)=\emptyset $ for $1\leq l<j\leq k,$ and (ii) $ \theta_l\not\in I(y_j,a_j),$ for $1\leq j\leq k,\ 1\leq l\leq n$, and (iii) $\{ \theta_1,\cdots ,\theta_n \}\cap(y_p,y_q)\neq\emptyset,\ \{ \theta_1,\cdots ,\theta_n \}\setminus[y_p,y_q]\neq\emptyset$ for every $p,q\in\{1,\cdots,k\}$ such that $y_p<y_q.$\end{lem}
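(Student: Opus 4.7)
The plan is to prove the equivalence by unfolding the definition $\Sigma_k(a_1,\ldots,a_k)=\bigcup_{i_1,\ldots,i_k\ \text{all distinct}}\prod_{j=1}^kJ_{i_j}(a_j)$ and chasing both implications. Recall that $J_{i}(a)$ consists of those $x\in[0,2\pi)$ for which the closed arc $I(x,a)$ is contained in the open gap $(\theta_i,\theta_{i+1})(\mathrm{mod}\ 2\pi)$, so $(y_1,\ldots,y_k)\in\Sigma_k(a_1,\ldots,a_k)$ means exactly that one can assign pairwise distinct indices $i_1,\ldots,i_k$ with $I(y_j,a_j)\subset(\theta_{i_j},\theta_{i_j+1})(\mathrm{mod}\ 2\pi)$ for each $j$.

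For the forward direction, I will take such an assignment and read off (i), (ii), (iii) one at a time. Condition (ii) is immediate since an open gap contains no $\theta_l$. For (i), gaps with distinct labels are disjoint open arcs, and so are their subsets $I(y_l,a_l)$ and $I(y_j,a_j)$. For (iii), if $y_p<y_q$ with $p\ne q$, then $y_p$ and $y_q$ sit in different gaps; travelling from $y_p$ to $y_q$ along $[0,2\pi)$ must cross at least one $\theta_l$, which yields $(y_p,y_q)\cap\{\theta_1,\ldots,\theta_n\}\ne\emptyset$, while travelling the other way around the circle must also cross at least one $\theta_l$, which yields $\{\theta_1,\ldots,\theta_n\}\setminus[y_p,y_q]\ne\emptyset$.

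For the backward direction, I will start from (ii) to produce the candidate indices. Since each closed arc $I(y_j,a_j)$ is connected, of size $a_j\in(0,2\pi)$, and avoids every $\theta_l$, it must lie in a single open gap of the partition of the circle by $\{\theta_1,\ldots,\theta_n\}$; denote its index by $i_j$. The remaining task is to check that the $i_j$ are pairwise distinct. Condition (i) already rules out $y_p=y_q$ for $p\ne q$, so WLOG $y_p<y_q$. Assume for contradiction that $i_p=i_q$ and split on whether the common gap $(\theta_{i_p},\theta_{i_p+1})(\mathrm{mod}\ 2\pi)$ wraps across $0$: if it does not, then $(y_p,y_q)$ is contained in the open gap and contains no $\theta_l$, contradicting the first clause of (iii); if it wraps, then $i_p=n$ and the gap equals $(\theta_n,2\pi)\cup[0,\theta_1)$, and a short case check on which side of the wrap each of $y_p,y_q$ sits leads either to the same contradiction with the first clause or to $[y_p,y_q]\supset\{\theta_1,\ldots,\theta_n\}$, which contradicts the second clause of (iii).

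The only mildly delicate point is the wrap-around analysis, and this is precisely what forces condition (iii) to carry two clauses: the first clause handles a non-wrapping gap (no $\theta_l$ between $y_p$ and $y_q$), while the second clause handles the configuration where $y_p$ and $y_q$ straddle the point $0$ inside the same wrapping gap (all $\theta_l$ trapped in $[y_p,y_q]$). Everything else is a routine translation between the definition of $\Sigma_k$ and the geometric content of the three conditions.
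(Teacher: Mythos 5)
Your proof is correct and follows essentially the same structure as the paper's: forward by reading off (i)--(iii) directly from the gap assignment, backward by reconstructing the indices $i_j$ from (ii) and then invoking (iii), with a wrap-around case split, to force the $i_j$ to be pairwise distinct. The only stylistic difference is that your forward argument for (iii) is phrased informally as \emph{travelling around the circle}, whereas the paper spells out the three cases $i_p,i_q\neq n$, $i_p\neq i_q=n$, and $i_p=n\neq i_q$ and exhibits explicit witnesses $\theta_l\in(y_p,y_q)$ and $\theta_{l'}\notin[y_p,y_q]$ in each.
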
\begin{proof}If $(y_1,\cdots,y_k)\in\Sigma_k(a_1,\cdots,a_k) $, then we can find $i_1,\cdots,i_{k}\in\{1,\cdots,n\}$ {all distinct} such that $y_j\in J_{i_j}(a_j),$ thus $I(y_j,a_j)\subset(\theta_{i_j},\theta_{i_j+1})(\text{mod}\ 2\pi),$ and $I(y_l,a_l)\cap I(y_j,a_j)\subseteq(\theta_{i_l},\theta_{i_l+1})(\text{mod}\ 2\pi)\cap(\theta_{i_j},\theta_{i_j+1})(\text{mod}\ 2\pi)=\emptyset $ for $1\leq l<j\leq k,$ since $i_l\neq i_j,$ which gives (i).
		
		Since $0< \theta_1 < \cdots < \theta_n < 2\pi$, we have $\theta_l\not\in (\theta_{j},\theta_{j+1})(\text{mod}\ 2\pi) $ for $1\leq j,l\leq n$. Thus for $1\leq j\leq k,\ 1\leq l\leq n$, we have $\theta_l\not\in (\theta_{i_j},\theta_{i_j+1})(\text{mod}\ 2\pi) $ and $I(y_j,a_j)\subset(\theta_{i_j},\theta_{i_j+1})(\text{mod}\ 2\pi),$ which implies (ii) $ \theta_l\not\in I(y_j,a_j).$
		
		For every $p,q\in\{1,\cdots,k\},$ such that $y_p<y_q,$ we have $i_p\neq i_q.$ If $i_p,i_q\neq n$ then we have $y_p\in I(y_p,a_p)\subset(\theta_{i_p},\theta_{i_p+1})(\text{mod}\ 2\pi)=(\theta_{i_p},\theta_{i_p+1}),$ and similarly $y_q\in(\theta_{i_q},\theta_{i_q+1})$. Therefore, $\theta_{i_p}<y_p<y_q<\theta_{i_q+1} $ and $i_p<{i_q+1}, $ since $i_p,i_q\in \mathbb{Z},$ we have $i_p\leq i_q$, since $i_p\neq i_q,$ we have $i_p< i_q$ and $i_p+1\leq i_q.$ Thus $0<\theta_{i_p}<y_p<\theta_{i_p+1}\leq \theta_{i_q}<y_q $ and $\theta_{i_p+1}\in(y_p,y_q), $ $\theta_{i_p}\not\in[y_p,y_q], $ which implies (iii).
		
		If $i_p\neq i_q= n$, then we have $y_p\in (\theta_{i_p},\theta_{i_p+1})$ and $y_q\in(\theta_{i_q},\theta_{i_q+1})(\text{mod}\ 2\pi)=(\theta_{n},2\pi)\cup[0,\theta_1)$.  Thus $\theta_1\leq \theta_{i_p}<y_p<y_q, $ which implies $y_q\not\in[0,\theta_1)$ and $y_q\in(\theta_{n},2\pi).$ Now we have $i_p<n,$ $0<\theta_{i_p}<y_p<\theta_{i_p+1}\leq \theta_{n}, $ and $\theta_{i_p+1}\in(y_p,y_q), $ $\theta_{i_p}\not\in[y_p,y_q], $ which implies (iii).
		
		If $i_p=n\neq i_q$,  then we have $y_p\in (\theta_{n},2\pi)\cup[0,\theta_1)$ and $y_q\in(\theta_{i_q},\theta_{i_q+1}),\ i_q<n.$ Thus $y_p<y_q<\theta_{i_q+1}\leq\theta_{n}, $ which implies $y_p\not\in(\theta_{n},2\pi)$ and $y_p\in[0,\theta_1).$ Now we have  $y_p<\theta_{1}\leq \theta_{i_q}<y_q<\theta_{i_q+1}<\pi $ and $\theta_{1}\in(y_p,y_q), $ $\theta_{i_q+1}\not\in[y_p,y_q], $ which also implies (iii). Now we finish the proof of the first part.
		
		Conversely if (i)(ii)(iii) are true, by (ii) there exists a unique $i_{j}\in\{1,\cdots,n\}$ such that $I(y_j,a_j)\subset(\theta_{i_j},\theta_{i_j+1})(\text{mod}\ 2\pi), $ by (i) we know that all $y_k$'s are distinct.
		
		If $i_p=i_q$ for some $p,q\in\{1,\cdots,k\}$ with $p\neq q,$ we can assume $y_p<y_q.$ If $i_p=i_q<n$, then we have $y_p\in (\theta_{i_p},\theta_{i_p+1})$ and $y_q\in(\theta_{i_q},\theta_{i_q+1})=(\theta_{i_p},\theta_{i_p+1}),$ thus $\theta_{i_p}<y_p<y_q<\theta_{i_p+1}, $ and $\{ \theta_1,\cdots ,\theta_n \}\cap(y_p,y_q)=\emptyset,$ which contradicts (iii).
		
		If $i_p=i_q=n$, then we have $y_p\in (\theta_{i_p},\theta_{i_p+1})(\text{mod}\ 2\pi)=(\theta_{n},2\pi)\cup[0,\theta_1)$ and \\ $y_q\in(\theta_{i_q},\theta_{i_q+1})(\text{mod}\ 2\pi)=(\theta_{n},2\pi)\cup[0,\theta_1).$ Thus,  if $y_q<\theta_1$, then $ y_p<y_q<\theta_{1}$ and $\{ \theta_1,\cdots ,\theta_n \}\cap(y_p,y_q)=\emptyset;$ if $y_p>\theta_n$, then $\theta_{n}< y_p<y_q$ and $\{ \theta_1,\cdots ,\theta_n \}\cap(y_p,y_q)=\emptyset;$ if $y_p\leq \theta_n,\ y_q\geq\theta_{1}$, then $y_p\in [0,\theta_1)$,$y_q\in(\theta_{n},2\pi),$ and $\{ \theta_1,\cdots ,\theta_n \}\setminus[y_p,y_q]=\emptyset.$ All the 3 cases contradict (iii).
		
		Therefore, we must have $i_p\neq i_q$ for every $p,q\in\{1,\cdots,k\},\ p\neq q,$ i.e., $i_1,\cdots,i_{k}\in\{1,\cdots,n\}$ are {all distinct}, and $I(y_j,a_j)\subset(\theta_{i_j},\theta_{i_j+1})(\text{mod}\ 2\pi),\ y_j\in J_{i_j}(a_j),$ which implies $(y_1,\cdots,y_k)\in\Sigma_k(a_1,\cdots,a_k). $ This completes the proof.\end{proof}
	
	\subsubsection{Upper bound}The proof of the upper bound is based on the following negative correlation of the vacuum events for the determinantal point processes. We refer to Lemma 3.8 in \cite{BB} for its proof.
	
	\begin{lem}\label{lem3}Let $\xi^{(n)} $ be the
		point process associated to the eigenvalues of Haar-distributed unitary matrix
		(resp., an element of the GUE). Let $I_1$ and $I_2$ be compact disjoint subsets
		of $[0, 2\pi)$ (resp., $ \mathbb{R}$). Then\begin{align}\label{15}&\mathbb{P}(\xi^{(n)}(I_1\cup I_2)=0)\leq \mathbb{P}(\xi^{(n)}(I_1)=0) \mathbb{P}(\xi^{(n)}(I_2)=0).
	\end{align}\end{lem}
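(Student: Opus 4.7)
The plan is to exploit the determinantal structure of the CUE and GUE eigenvalue processes. For both ensembles, the point process $\xi^{(n)}$ is determinantal with a self-adjoint, positive, trace-class kernel $K_n$ (the orthogonal projection in $L^2$ onto the first $n$ Fourier modes for CUE, and onto the first $n$ Hermite functions for GUE). For any compact set $I$ in the state space,
$$\mathbb{P}\bigl(\xi^{(n)}(I)=0\bigr) = \det\bigl(I - K_n|_I\bigr),$$
where $K_n|_I$ denotes the restriction of the integral operator to $L^2(I)$. Thus Lemma \ref{lem3} reduces to the purely operator-theoretic inequality
$$\det\bigl(I - K_n|_{I_1 \cup I_2}\bigr) \le \det\bigl(I - K_n|_{I_1}\bigr)\, \det\bigl(I - K_n|_{I_2}\bigr).$$

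Since $I_1$ and $I_2$ are disjoint, decomposing $L^2(I_1 \cup I_2) = L^2(I_1) \oplus L^2(I_2)$ puts the restricted kernel into block form
$$K_n|_{I_1 \cup I_2} = \begin{pmatrix} K_1 & K_{12} \\ K_{12}^* & K_2 \end{pmatrix}, \qquad K_j := K_n|_{I_j},$$
where $K_{12}$ is the integral operator from $L^2(I_2)$ to $L^2(I_1)$ with kernel $K_n(x,y)$. Each $K_j$ is a self-adjoint positive trace-class contraction with $\|K_j\|_{\mathrm{op}} < 1$: if $1$ were an eigenvalue then the eigenvector, extended by zero, would be a real-analytic function (a trigonometric polynomial in the CUE case, a Hermite-weighted polynomial in the GUE case) vanishing on the nonempty open set $I_j^c$, forcing it to be identically zero. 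Hence $I - K_1$ is invertible, and the Schur complement identity for block Fredholm determinants gives
$$\det\bigl(I - K_n|_{I_1 \cup I_2}\bigr) = \det(I - K_1)\,\det\bigl((I - K_2) - M\bigr), \quad M := K_{12}^*(I - K_1)^{-1} K_{12}.$$

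The final step is to show $\det\bigl((I - K_2) - M\bigr) \le \det(I - K_2)$. First, $M = B^*B \ge 0$ with $B := (I - K_1)^{-1/2} K_{12}$. Next, positivity of $I - K_n|_{I_1 \cup I_2}$ combined with the Schur complement characterization of positive semidefiniteness implies $(I - K_2) - M \ge 0$; equivalently, the self-adjoint operator $T := (I - K_2)^{-1/2} M (I - K_2)^{-1/2}$ satisfies $0 \le T \le I$. From the factorization
$$(I - K_2) - M = (I - K_2)^{1/2}(I - T)(I - K_2)^{1/2},$$
and the fact that every eigenvalue $\lambda_j(T)$ lies in $[0,1]$, so that $\det(I - T) = \prod_j (1 - \lambda_j(T)) \le 1$, we conclude
$$\det\bigl((I - K_2) - M\bigr) = \det(I - K_2)\,\det(I - T) \le \det(I - K_2),$$
which completes the argument.

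The main obstacle is not the algebra, which is short, but the functional-analytic bookkeeping: checking that $K_j, M, T$ are all trace class so that the Fredholm determinants are well defined, confirming the strict invertibility of $I - K_1$, and verifying positivity of the Schur complement. For the smooth CUE and GUE kernels on compact sets these are all classical, so the argument proceeds cleanly.
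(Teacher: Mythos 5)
The paper does not prove this lemma; it is cited verbatim from Lemma~3.8 of Ben Arous--Bourgade \cite{BB}, so there is no in-paper proof to compare against. Your argument is correct and self-contained. After the block decomposition over $L^2(I_1)\oplus L^2(I_2)$, the Schur-complement factorization of the Fredholm determinant together with positivity of the Schur complement is precisely the Fischer inequality $\det M\le \det M_{11}\,\det M_{22}$ for the positive semi-definite block operator $M=\text{Id}-\chi_{I_1\cup I_2}K_n\chi_{I_1\cup I_2}$. Your invertibility argument for $\text{Id}-K_1$ --- that an eigenfunction with eigenvalue $1$ would have to lie in the range of $P_n$, hence be a trigonometric polynomial (CUE) or a Hermite-weighted polynomial (GUE) vanishing on a nonempty open set, hence identically zero --- is exactly the right way to rule out the degenerate boundary case $\|K_1\|=1$, and since $K_n$ is a rank-$n$ orthogonal projection, the trace-class and determinant-multiplicativity bookkeeping you flag at the end reduces to finite-dimensional linear algebra. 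One tiny edge case worth mentioning explicitly: if one of $I_1,I_2$ is empty the inequality is trivially an equality, which is needed to cover the situation where $I_1$ exhausts the whole circle; your ``nonempty open complement'' step tacitly assumes both sets are proper. Otherwise this is a clean argument that could stand in place of the external citation.
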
By monotone convergence theorem, we have $$ \mathbb{P}(\xi^{(n)}(\cup_{j=1}^{+\infty}J_j)=0)=\lim\limits_{k\to+\infty}\mathbb{P}(\xi^{(n)}(\cup_{j=1}^{k}J_j)=0),$$ thus \eqref{15} is also true if $I_1$ and $I_2$ are disjoint $F_{\sigma}$ subsets (i.e. $I_k=\cup_{j=1}^{+\infty}I_{k,j}$ and $I_{k,j} $ are compact), especially the subsets in the form of $ (a,b)(\text{mod}\ 2\pi)$ or $ [a,b](\text{mod}\ 2\pi)$. By induction, for disjoint $F_{\sigma}$ subsets $I_1,\cdots, I_k$, we also have \begin{align}\label{16}&\mathbb{P}(\xi^{(n)}(\cup_{j=1}^{k}I_j)=0)\leq \prod_{j=1}^{k}\mathbb{P}(\xi^{(n)}(I_j)=0) .
	\end{align}By definition of $ D_n(\alpha),$ for $a\in(0,2\pi),\ x\in\mathbb{R},$ we have \begin{align}\label{17}&\mathbb{P}(\xi^{(n)}(I(x,a))=0)=D_n(a/2) .
	\end{align} We consider the point process $$ \xi^{(n)}=\sum_{i=1}^n\delta_{\theta_i}.$$ For fixed $x_1,\cdots,x_k\in \mathbb{R},\ n>N_0,$ let's denote $$A_n:=\Big\{(y_1,\cdots,y_k)\in[0,2\pi)^k\big |I(y_i,F_n(x_i))\cap I(y_j,F_n(x_j))=\emptyset,\ \forall\ 1\leq i<j\leq k\Big \}.$$ 
	If $ (y_1,\cdots,y_k)\in A_n,$ then all $y_k$'s are distinct, let \begin{align}\label{18}I_{n,k}= \cup_{j=1}^kI(y_j, F_n(x_j)), J_{n,k, j}=(z_j,z_{j+1})(\text{mod}\ 2\pi),\ 1\leq j\leq k,\end{align} here, $z_j (1\leq j\leq k)$ is the increasing rearrangement of $y_j (1\leq j\leq k)$ and $z_{k+1}=z_1+2\pi.$ Then $I_{n,k}$ is a disjoint union and by Lemma \ref{lem2} we have\begin{align}\label{19}&\phi_{k,n}(y_1,\cdots,y_k)=(n/4)^k(2\ln n)^{\frac{k}{2}}\times\\&\nonumber \mathbb{P}(\xi^{(n)}(I_{n,k})=0,\ \xi^{(n)}(J_{n,k,j})>0,\ \forall\ 1\leq j\leq k).
	\end{align} By \eqref{16} and \eqref{17} we have\begin{align*}\phi_{k,n}(y_1,\cdots,y_k)&\leq(n/4)^k(2\ln n)^{\frac{k}{2}}\mathbb{P}(\xi^{(n)}(I_{n,k})=0)\\ &\leq(n/4)^k(2\ln n)^{\frac{k}{2}}\prod_{j=1}^k\mathbb{P}(\xi^{(n)}(I(y_j, F_n(x_j)))=0)\\ &=(n/4)^k(2\ln n)^{\frac{k}{2}}\prod_{j=1}^kD_n(F_n(x_j)/2).
	\end{align*}For  $ (y_1,\cdots,y_k)\in [0,2\pi)^k\setminus A_n$, by Lemma \ref{lem2},  we have\begin{align*}&\phi_{k,n}(y_1,\cdots,y_k)=0\leq(n/4)^k(2\ln n)^{\frac{k}{2}}\prod_{j=1}^kD_n(F_n(x_j)/2).
	\end{align*}Therefore,  by \eqref{2} we always have\begin{align*}&\sup_{y_1,\cdots,y_k\in [0,2\pi)}\phi_{k,n}(y_1,\cdots,y_k)\leq(n/4)^k(2\ln n)^{\frac{k}{2}}\prod_{j=1}^kD_n(F_n(x_j)/2)\\&=\prod_{j=1}^k(n(2\ln n)^{\frac{1}{2}}D_n(F_n(x_j)/2)/4)\to \prod_{j=1}^k\left(e^{c_0-x_j}/4\right), \,\,\, n\to+\infty, 
	\end{align*} which gives the upper bound \eqref{13}.
	\subsubsection{Lower bound}
	Now we consider the lower bound.
	
	If all $y_k$'s are distinct, let $z_j$ be the increasing rearrangement of $y_j$ and $z_{k+1}=z_1+2\pi$ as above. By  \eqref{11}, there further exists $N_1>N_0$ (depending only on $x_1,\cdots,x_k$ and  $ y_1,\cdots,y_k$) such that $0<2s_0/n<F_n(x_j)<\min\{z_{i+1}-z_i|1\leq i\leq k\}/2$ for $n>N_1,\ 1\leq j\leq k.$ Then we have $(y_1,\cdots,y_k)\in A_n$ for $n>N_1,$ and we can still use the notation \eqref{18} and formula \eqref{19} in this case. The proof of the lower bound is based on the following asymptotic splitting property,
	\begin{lem}\label{splt}
		\begin{align}\label{42}&\lim_{n\to+\infty}\mathbb{P}(\xi^{(n)}(I_{n,k})=0)/\prod_{j=1}^k\mathbb{P}(\xi^{(n)}(I(y_j, F_n(x_j)))=0)=1.
	\end{align}\end{lem}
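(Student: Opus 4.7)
The plan is to establish \eqref{42} by controlling the ratio from both sides. The upper bound (ratio $\leq 1$) is immediate from the negative-correlation inequality \eqref{16} together with the identification \eqref{17} of vacuum probabilities as Toeplitz determinants, so the real task is to show that $\liminf_{n\to\infty}$ of the ratio is at least $1$; in other words, the $k$ vacuum events on the well-separated arcs $I(y_j, F_n(x_j))$ are asymptotically independent.

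To formalize this, I would recast everything via Fredholm determinants of the CUE correlation kernel $K_n(\theta,\phi) = \frac{1}{2\pi}\sum_{\ell=0}^{n-1} e^{i\ell(\theta-\phi)}$. Denoting $I_j := I(y_j, F_n(x_j))$, one has $\mathbb{P}(\xi^{(n)}(I_{n,k})=0) = \det(I - K_n\chi_{I_{n,k}})$ and $\mathbb{P}(\xi^{(n)}(I_j)=0) = \det(I - K_n\chi_{I_j})$. Using the orthogonal decomposition $L^2(I_{n,k}) = \bigoplus_j L^2(I_j)$, write $K_n\chi_{I_{n,k}} = D + R$, where $D$ is the block-diagonal operator with diagonal blocks $K_n\chi_{I_j}$ and $R$ consists of the off-diagonal blocks. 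Provided $I-D$ is invertible, the ratio in \eqref{42} factors as $\det(I - (I-D)^{-1}R)$, and the goal becomes to prove this determinant tends to $1$.

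The off-diagonal operator $R$ is Hilbert-Schmidt small: since the $y_j$'s are fixed and distinct while each $F_n(x_j) = o(1)$, for large $n$ the arcs $I_i$ and $I_j$ ($i \neq j$) are separated by some $\delta > 0$ independent of $n$, so $|K_n(\theta,\phi)| \leq 1/(2\pi\sin(\delta/2)) = O(1)$ on $I_i \times I_j$, giving $\|R\|_{\mathrm{HS}}^2 = O(\sum_{i\neq j} F_n(x_i)F_n(x_j)) = O((\ln n)/n^2)$. What makes the estimate delicate is the operator norm of $(I-D)^{-1}$, equivalently the top eigenvalue of each $K_n\chi_{I_j}$: since Lemma \ref{obs} forces $\det(I - K_n\chi_{I_j}) \sim e^{c_0 - x_j}/(n\sqrt{2\ln n}) \to 0$, eigenvalues of $K_n\chi_{I_j}$ necessarily approach $1$, so $\|(I-D)^{-1}\|$ grows with $n$. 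The \emph{lowest eigenvalue estimate} in Lemma \ref{lem18} is precisely the quantitative statement that $\mu_{\max}(K_n\chi_{I_j}) \to 1$ slowly enough to be useful, while the \emph{comparison of Fredholm determinants} in Lemma \ref{lem15} provides control on the individual traces entering the Plemelj--Smithies expansion of $\det(I - (I-D)^{-1}R)$.

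The main obstacle is exactly this spectral gap issue: a naive bound of the form $|\det(I - (I-D)^{-1}R) - 1| \leq C\|(I-D)^{-1}\|_{\mathrm{op}}\,\|R\|_{\mathrm{HS}}$ is insufficient because $\|(I-D)^{-1}\|$ is not obviously $o(n/\sqrt{\ln n})$. The resolution is to forgo operator-norm bounds in favor of trace-based estimates on $\mathrm{tr}(((I-D)^{-1}R)^m)$ for $m\geq 2$ (the $m=1$ trace vanishes since $R$ has zero diagonal blocks), combining the spectral information of Lemma \ref{lem18} with the Fredholm comparison of Lemma \ref{lem15} so that the small Hilbert-Schmidt mass of $R$ on off-diagonal blocks dominates the polynomial growth in $n$ of $(I-D)^{-1}$. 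Once this is done, $\det(I - (I-D)^{-1}R) = 1 + o(1)$, yielding \eqref{42}.
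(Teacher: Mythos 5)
Your approach is the same as the paper's: split the operator into diagonal and off--diagonal blocks, observe that the first-order trace term vanishes because $(\mathrm{Id}-B)^{-1}$ is block-diagonal while $A-B$ has zero diagonal blocks, estimate $|A-B|_2^2 = O(\ln n/n^2)$, and control $\|(\mathrm{Id}-B)^{-1}\|$ via Lemma \ref{lem18}. Where you go astray is only in your final paragraph. You call the bound
$$|\det - 1| \lesssim \|(\mathrm{Id}-B)^{-1}\|^2\,|A-B|_2^2$$
insufficient, worrying that $\|(\mathrm{Id}-B)^{-1}\|$ grows too fast, and you propose instead to chase higher-order traces $\Tr\bigl(((\mathrm{Id}-B)^{-1}R)^m\bigr)$ for $m\ge 2$. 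That detour is unnecessary and not what the paper does. The point you are underestimating in Lemma \ref{lem18} is the exponential factor $e^{\Tr B_j}$: since $\Tr B_j = nF_n(x_j)/(2\pi) \ge \tfrac12(\ln n)^{1/2}$ for large $n$ while $\det(\mathrm{Id}-B_j) = D_n(F_n(x_j)/2) \sim e^{c_0-x_j}/(n(2\ln n)^{1/2})$, one gets
$$1-\lambda_1(B_j) \ge \det(\mathrm{Id}-B_j)\,e^{\Tr B_j - 1} \gtrsim \frac{e^{(\ln n)^{1/2}/2}}{n(\ln n)^{1/2}},$$
hence $\|(\mathrm{Id}-B)^{-1}\| = O\bigl(n(\ln n)^{1/2}e^{-(\ln n)^{1/2}/2}\bigr)$. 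This is \emph{not} merely polynomially large; the extra decay $e^{-(\ln n)^{1/2}/2}$ is what saves the argument. Multiplying with $|A-B|_2^2 = O(\ln n/n^2)$ gives $\|(\mathrm{Id}-B)^{-1}\|^2|A-B|_2^2 = O\bigl((\ln n)^2 e^{-(\ln n)^{1/2}}\bigr)\to 0$, so the elementary inequality from Lemma \ref{lem15} (combined with the fact that $\Tr((A-B)(\mathrm{Id}-B)^{-1}) = 0$, which you correctly identified) already yields $\det(\mathrm{Id}-A)/\det(\mathrm{Id}-B)\to 1$. You had every needed ingredient; you just did not finish the arithmetic that shows the operator-norm route closes.
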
  For a nuclear operator $T$ in the form of $$Tf(x)=\int K(x,y)f(y)dy,$$ the Hilbert-Schmidt norm is given by\begin{align*}&|T|_2^2=\int\int |K(x,y)|^2dxdy
	\end{align*}and the trace is given by $$\Tr T=\int K(x,x) dx.$$ For a bounded operator $T$ on $L^2$-space, the operator norm is given by $$\|T\|=\sup\Big\{\|Tf\|_{L^2}|\ \|f\|_{L^2}=1\Big \}.$$ Let's  recall that the probability that a determinantal
	point process $\xi$ with kernel $K$ has no point in a measurable subset $A$ is given by the
	Fredholm determinant \cite{AGZ}\begin{align*}&\mathbb{P}(\xi(A)=0)=\det (\text{Id} -K_A).
	\end{align*}
	In the case of CUE,  the point process of eigenvalues $ \xi^{(n)}$ is a determinantal
	point process with kernel \cite{AGZ}, \begin{align}\label{kern}K_n(x,y):&=K^{CUE(n)}(x,y)=\dfrac{1}{2\pi}\dfrac{\sin (n(x-y)/2)}{\sin ((x-y)/2)}\\\nonumber&=\dfrac{1}{2\pi}\sum\limits_{k=0}^{n-1}e^{(k-(n-1)/2)i(x-y)}.\end{align} 
	Therefore,  the probability that  $\xi^{(n)}$ has no point  in a measurable subset $I$ is $$\mathbb{P}(\xi^{(n)}(I)=0)=\det (\text{Id} -\chi_I P_n\chi_I), $$ where  $P_n$ is the orthogonal projection from $L^2([0,2\pi))$ to the finite dimensional space $V_n:=\text{span}\{e^{i(k-(n-1)/2)x}|0\leq k<n,k\in\mathbb{Z}\}$ with kernel $K_n(x,y)$ in \eqref{kern}, and $\chi_I$ is the characteristic function supported on $I$. Assume $n>N_1$ and denote \begin{equation}\label{defi}A=\chi_{I_{n,k}} P_n\chi_{I_{n,k}},\ B=\sum\limits_{j=1}^kB_j,\ B_j=\chi_{I(y_j, F_n(x_j))} P_n\chi_{I(y_j, F_n(x_j))},\end{equation}then we have $$\mathbb{P}(\xi^{(n)}(I_{n,k})=0)=\det (\text{Id} -A); $$ since the support $I(y_j, F_n(x_j))$'s are disjoint, we also have\begin{align*}&\prod_{j=1}^k\mathbb{P}(\xi^{(n)}(I(y_j, F_n(x_j)))=0)=\prod_{j=1}^k\det (\text{Id} -B_j)=\det (\text{Id} -B).
	\end{align*} Now \eqref{42} is equivalent to\begin{align}\label{47}&\lim_{n\to+\infty}\det (\text{Id} -A)/\det (\text{Id} -B)=1.
	\end{align}By \eqref{1} we have \begin{align*}&\det (\text{Id} -B)=\prod_{j=1}^k\mathbb{P}(\xi^{(n)}(I(y_j, F_n(x_j)))=0)=\prod_{j=1}^kD_n(F_n(x_j)/2)>0,
	\end{align*}and thus $\det (\text{Id} -A)/\det (\text{Id} -B) $ is well defined. Since $P_n$ is a finite rank orthogonal projection operator, we know that $A,\ B$ are both finite rank symmetric operators. As $ \langle Af,f\rangle=\langle P_n\chi_{I_{n,k}}f,\chi_{I_{n,k}}f\rangle=\|P_n\chi_{I_{n,k}}f\|_{L^2}^2$, we have $$0\leq\langle Af,f\rangle=\|P_n\chi_{I_{n,k}}f\|_{L^2}^2\leq \|\chi_{I_{n,k}}f\|_{L^2}^2\leq \|f\|_{L^2}^2, $$ here, we use the $L^2$ inner product $$ \langle f,g\rangle:=\int_0^{2\pi}f(x)\overline{g(x)}dx,\ \|f\|_{L^2}^2=\langle f,f\rangle.$$
	Similarly, we have $0\leq\langle B_jf,f\rangle\leq \|\chi_{I(y_j, F_n(x_j))}f\|_{L^2}^2, $ and if $n>N_1$, then\\ $I(y_j, F_n(x_j))$'s are disjoint and\begin{align*}&0\leq\sum_{j=1}^k\langle B_jf,f\rangle=\langle Bf,f\rangle\leq\sum_{j=1}^k \|\chi_{I(y_j, F_n(x_j))}f\|_{L^2}^2\leq \|f\|_{L^2}^2.
	\end{align*}Therefore,  we can conclude that $A,\ B,\ \text{Id} -A,\ \text{Id} -B $ are all semi-positive definite. As $\det (\text{Id} -B)>0, $ then $\text{Id} -B $ is further positive definite, so is its inverse $(\text{Id}-B)^{-1}. $ Such results are also true for the GUE case in \S\ref{gu7}. 
	
	We will need the following general comparison inequalities regarding the Fredholm determinants which will be used in both CUE and GUE cases.\begin{lem}\label{lem15}Assume $A,B$ are finite rank symmetric operators on a Hilbert space, $\text{Id} -B $ is positive definite and $\text{Id} -A $ is semi-positive definite,  then we have $$1-|A-B|_2^2\|(\text{Id}-B)^{-1}\|^2\leq\exp(\Tr(A-B)(\text{Id}-B)^{-1})\det(\text{Id} -A)/\det(\text{Id}-B)\leq1 $$and $$|\Tr((A-B)(\text{Id}-B)^{-1})|\leq|\Tr(A-B)|+|A-B|_2|B|_2\|(\text{Id}-B)^{-1}\|. $$\end{lem}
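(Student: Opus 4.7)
The plan is to reduce the problem to a statement about eigenvalues of a single self-adjoint operator by conjugating with $(\text{Id}-B)^{-1/2}$, which is well defined because $\text{Id}-B$ is positive definite. Setting $C=(\text{Id}-B)^{-1/2}(A-B)(\text{Id}-B)^{-1/2}$, a direct computation gives $\text{Id}-C=(\text{Id}-B)^{-1/2}(\text{Id}-A)(\text{Id}-B)^{-1/2}$, so the semi-positive definiteness of $\text{Id}-A$ passes to $\text{Id}-C$ and every eigenvalue $\lambda_i$ of $C$ satisfies $\lambda_i\leq 1$. Multiplicativity of Fredholm determinants (for finite-rank perturbations) and cyclicity of the trace give $\det(\text{Id}-A)/\det(\text{Id}-B)=\det(\text{Id}-C)=\prod_i(1-\lambda_i)$ and $\Tr((A-B)(\text{Id}-B)^{-1})=\Tr C=\sum_i\lambda_i$, so the first double inequality reduces to
$$1-|A-B|_2^2\|(\text{Id}-B)^{-1}\|^2 \leq \prod_i e^{\lambda_i}(1-\lambda_i)\leq 1.$$

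For the upper bound, the scalar function $g(\lambda)=e^\lambda(1-\lambda)$ has derivative $-\lambda e^\lambda$, so it attains its global maximum $g(0)=1$ at the origin and each factor in the product is bounded by $1$. For the lower bound I will combine the inequality $e^\lambda\geq 1+\lambda$ (which yields $e^\lambda(1-\lambda)\geq 1-\lambda^2$ whenever $\lambda\leq 1$) with a case split. If $|C|_2^2=\sum_i\lambda_i^2\geq 1$ the lower bound is automatic since every factor is nonnegative; otherwise $|\lambda_i|<1$ for all $i$, and the elementary product inequality $\prod_i(1-a_i)\geq 1-\sum_i a_i$ for $a_i\in[0,1]$ yields $\prod_i(1-\lambda_i^2)\geq 1-|C|_2^2$. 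Finally, the Hilbert-Schmidt estimate $|XYZ|_2\leq \|X\|\,|Y|_2\,\|Z\|$ together with $\|(\text{Id}-B)^{-1/2}\|^2=\|(\text{Id}-B)^{-1}\|$ gives $|C|_2\leq\|(\text{Id}-B)^{-1}\|\,|A-B|_2$, completing the first claim.

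For the second inequality, I will write $(\text{Id}-B)^{-1}=\text{Id}+B(\text{Id}-B)^{-1}$ and split
$$\Tr\bigl((A-B)(\text{Id}-B)^{-1}\bigr)=\Tr(A-B)+\Tr\bigl((A-B)B(\text{Id}-B)^{-1}\bigr).$$
The second trace is estimated using $|\Tr(XY)|\leq |X|_2|Y|_2$ together with $|BT|_2\leq |B|_2\|T\|$ to obtain the bound $|A-B|_2|B|_2\|(\text{Id}-B)^{-1}\|$, after which the triangle inequality delivers the stated estimate.

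The delicate step is the lower bound on $\prod_i e^{\lambda_i}(1-\lambda_i)$: semi-positive definiteness of $\text{Id}-A$ only yields $\lambda_i\leq 1$, with no matching lower bound, so the pointwise comparison $e^\lambda(1-\lambda)\geq 1-\lambda^2$ can produce useless negative terms whenever some $|\lambda_i|>1$. The dichotomy on $|C|_2^2$ is precisely the device needed to bypass this: it isolates the regime in which the product inequality is valid while keeping the bound trivially true in the remaining case.
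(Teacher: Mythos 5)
Your proposal is correct and follows essentially the same route as the paper: conjugation by $(\text{Id}-B)^{-1/2}$, reduction to the scalar inequality $0\leq e^{\lambda}(1-\lambda)\leq 1$ for eigenvalues $\lambda\leq 1$, the Weierstrass product bound, the Hilbert--Schmidt estimate for $|C|_2$, and the decomposition $(\text{Id}-B)^{-1}=\text{Id}+B(\text{Id}-B)^{-1}$ for the trace bound. The only cosmetic difference is that you make the dichotomy on $|C|_2^2$ explicit, whereas the paper packages the same case distinction into the positive-part notation by bounding $e^{\lambda}(1-\lambda)\geq(1-\lambda^2)_+$ and then $\prod_j(1-\lambda_j^2)_+\geq 1-\sum_j\lambda_j^2$.
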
In the proof we need to use the following formulas     \cite{GGK} \begin{itemize}
		\item If $A,B$ are finite rank operators, then $\det(\text{Id} -A)\det(\text{Id} -B)=\det((\text{Id} -A)(\text{Id} -B))$ and  $|\Tr AB|\leq |A|_2|B|_2 $.
		\item If $A$ is a finite rank operator, $B$ is a bounded operator, then $\Tr AB=\Tr BA$ and  $|AB|_2\leq|A|_2\|B\|$.
	\end{itemize} If $B$ is a finite rank symmetric operator and $\text{Id} -B $ is positive definite, let $\{e_k\}$ be eigenfunctions forming a complete orthonormal basis  with $B e_k=\lambda_k(B) e_k$, then $ \lambda_k(B)\in\mathbb{R},\ \lambda_k(B)<1$. Now we have\begin{align*}&\det (\text{Id} -B)=\prod(1-\lambda_k(B)),\ \Tr B=\sum\lambda_k(B).
	\end{align*}We can also define $(\text{Id} -B)^p $ for every $p\in\mathbb{R}$ as\begin{align*}&(\text{Id} -B)^pf=\sum(1-\lambda_k(B))^p\langle f,e_k\rangle e_k=f+\sum((1-\lambda_k(B))^p-1)\langle f,e_k\rangle e_k.
	\end{align*}Then $(\text{Id} -B)^p $ is also positive definite, $(\text{Id} -B)^p(\text{Id} -B)^q=(\text{Id} -B)^{p+q} $ and $\det(\text{Id} -B)^p=(\det(\text{Id} -B))^p.$ Moreover, for $p<0$, we have $\|(\text{Id}-B)^{p}\|=(1-\lambda_1(B))^{p} $ where $\lambda_1(B) $ is the largest eigenvalue of $B$.\begin{proof}Since $\text{Id} -B $ is positive definite, so is its inverse $(\text{Id}-B)^{-1}$ and $(\text{Id}-B)^{-1}$ has a positive square root $(\text{Id}-B)^{-1/2}$. Moreover, $\|(\text{Id}-B)^{-1/2}\|^2=\|(\text{Id}-B)^{-1}\|=(1-\lambda_1(B))^{-1}, $ where $\lambda_1(B) $ is the largest eigenvalue of $B$ and $\lambda_1(B)<1 $.  We also have $(\det(\text{Id}-B)^{-1/2})^2=\det(\text{Id}-B)^{-1}=(\det(\text{Id}-B))^{-1} $. Since $\text{Id} -A $ is semi-positive definite, so is $A_1:=(\text{Id} -B)^{-1/2}(\text{Id} -A)(\text{Id} -B)^{-1/2},$ and $\det A_1= \det(\text{Id} -A)/\det(\text{Id}-B).$ Let $B_1:=(\text{Id} -B)^{-1/2}(A-B)(\text{Id} -B)^{-1/2},$ then we have $A_1+B_1=\text{Id},$ $B_1$ is a finite rank symmetric operator, $\Tr B_1=\Tr(A-B)(\text{Id}-B)^{-1},$ and its eigenvalues $ \lambda_j(B_1)$ are real. Since $A_1=\text{Id}-B_1$ is semi-positive definite, we have $ \lambda_j(B_1)\leq 1$ and   $ \det A_1=\det (\text{Id}-B_1)=\prod_j(1-\lambda_j(B_1))$. Now we can rewrite \begin{align}\label{48}&\exp(\Tr(A-B)(\text{Id}-B)^{-1})\det(\text{Id} -A)/\det(\text{Id}-B)\\\nonumber=&\exp(\Tr B_1)\det A_1\\ \nonumber=&\exp(\sum_j\lambda_j(B_1))\prod_j(1-\lambda_j(B_1))=\prod_j(e^{\lambda_j(B_1)}(1-\lambda_j(B_1))).
		\end{align}Since $ e^{\lambda}(1-\lambda)\leq 1$ and $1+\lambda\leq e^{\lambda},$ we have $(1+\lambda)_+\leq e^{\lambda} $ and thus  $ 1\geq e^{\lambda}(1-\lambda)\geq (1+\lambda)_+(1-\lambda)=(1-\lambda^2)_+$ for $ \lambda\leq 1.$ Therefore, we have \begin{align}\label{49}&1\geq\prod_j(e^{\lambda_j(B_1)}(1-\lambda_j(B_1)))\geq\prod_j(1-\lambda_j(B_1)^2)_+\geq
			1-\sum_j\lambda_j(B_1)^2.\end{align}Moreover, we have \begin{align}\label{50}&\sum_j\lambda_j(B_1)^2=|B_1|_2^2=|(\text{Id} -B)^{-1/2}(A-B)(\text{Id} -B)^{-1/2}|_2^2\\ \nonumber\leq&\|(\text{Id} -B)^{-1/2}\|^2|A-B|_2^2\|(\text{Id} -B)^{-1/2}\|^2=\|(\text{Id} -B)^{-1}\|^2|A-B|_2^2.\end{align}Therefore, the first inequality follows if we combine \eqref{48}\eqref{49}\eqref{50}. We also have\begin{align*}&|\Tr((A-B)(\text{Id}-B)^{-1})|\\=&|\Tr((A-B)+(A-B)B(\text{Id}-B)^{-1})|\\ \leq& |\Tr(A-B)|+|\Tr((A-B)B(\text{Id}-B)^{-1})|\\ \leq& |\Tr(A-B)|+|A-B|_2|B(\text{Id}-B)^{-1}|_2\\ \leq& |\Tr(A-B)|+|A-B|_2|B|_2\|(\text{Id}-B)^{-1}\|,
		\end{align*}which is the second inequality. This completes the proof.\end{proof}Thanks to Lemma \ref{lem15} and the fact that $\lim\limits_{n\to+\infty}(\ln n)^{2}e^{-(\ln n)^{1/2}}=0$, for every positive integer $k$, $x_1,\cdots,x_k\in \mathbb{R}$ and distinct $y_1,\cdots,y_k\in [0,2\pi)$, if we can prove the following bound for $n>N_1$,\begin{align}\label{51}&\Tr((A-B)(\text{Id}-B)^{-1})=0,\\ \label{52}& |A-B|_2^2=O\left(\frac{\ln n}{n^2}\right),\ \|(\text{Id}-B)^{-1}\| =O(n(\ln n)^{\frac{1}{2}}e^{-(\ln n)^{\frac{1}{2}}/2}),
	\end{align}then \eqref{47}  will be proved, and thus \eqref{42}.
	
	By \eqref{defi}, we can write\begin{align*}&A-B=\sum_{i\neq j}\chi_{I(y_i, F_n(x_i))} P_n\chi_{I(y_j, F_n(x_j))}:=\sum_{i\neq j}\chi_{i} P_n\chi_{j},
	\end{align*}here, we denote $\chi_{j}=\chi_{I(y_j, F_n(x_j))} $. For $i\neq j$, we have $\Tr(\chi_{i} P_n\chi_{j}(\text{Id}-B)^{-1})=\\ \Tr( P_n\chi_{j}(\text{Id}-B)^{-1}\chi_{i}).$ Since $I(y_j, F_n(x_j))$'s are disjoint, we have $\chi_{j}B=B_j=B\chi_{j} $ and thus $(\text{Id}-B)\chi_{j}(\text{Id}-B)^{-1}\chi_{i}=\chi_{j}(\text{Id}-B)(\text{Id}-B)^{-1}\chi_{i}=\chi_{j}\chi_{i}=0. $ Since $(\text{Id}-B)$ is invertible, we further have $\chi_{j}(\text{Id}-B)^{-1}\chi_{i}=0 $, which implies $\Tr(\chi_{i} P_n\chi_{j}(\text{Id}-B)^{-1})=0$. And thus \eqref{51} follows.
	
	By definition of $N_1$ and $z_j,$ for $x\in I(y_i, F_n(x_i)),\ y\in I(y_j, F_n(x_j)),\ i\neq j,\ n>N_1$, we have \begin{align*}&\min(|x-y|,2\pi-|x-y|)\geq \min(|y_i-y_j|,2\pi-|y_i-y_j|)-\max(F_n(x_i),F_n(x_j))\\ &\geq \min\{z_{i+1}-z_i|1\leq i\leq k\}-\min\{z_{i+1}-z_i|1\leq i\leq k\}/2\\&=\min\{z_{i+1}-z_i|1\leq i\leq k\}/2:=a_0\in(0,2\pi),\end{align*} and $$|K_n(x,y)|=\left|\dfrac{1}{2\pi}\dfrac{1-e^{in(x-y)}}{1-e^{i(x-y)}}\right| \leq\dfrac{1}{\pi}\dfrac{1}{|1-e^{i(x-y)}|}\leq \dfrac{1}{\pi}\dfrac{1}{|1-e^{ia_0}|}=O(1),$$ using this and \eqref{11} we have\begin{align*}&|A-B|_2^2=\sum_{i\neq j}\int_{I(y_i, F_n(x_i))}dx\int_{I(y_j, F_n(x_j))}|K_n(x,y)|^2dy\\ =&\sum_{i\neq j}\int_{I(y_i, F_n(x_i))}dx\int_{I(y_j, F_n(x_j))}O(1)dy=\sum_{i\neq j}F_n(x_i)F_n(x_j)O(1)\\ =&\sum_{i\neq j}O\left(\frac{\ln n}{n^2}\right)O(1)=k(k-1)O\left(\frac{\ln n}{n^2}\right)=O\left(\frac{\ln n}{n^2}\right),
	\end{align*}which is the first inequality in \eqref{52}.
	It remains to estimate $\|(\text{Id}-B)^{-1}\|, $ we need the following eigenvalue esitmate.\begin{lem}\label{lem18}Let $B$ be a finite rank symmetric operator on a Hilbert space such that $\text{Id} -B $ is positive definite, let $\lambda_1(B) $ be the largest eigenvalue of $B$, then we have $ 1-\lambda_1(B)\geq \det(\text{Id} -B)e^{\Tr B-1}.$\end{lem}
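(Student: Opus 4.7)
The plan is to diagonalize $B$ and reduce the inequality to an elementary calculation with real numbers less than~$1$. Since $B$ is a finite rank symmetric operator, it has real eigenvalues with only finitely many nonzero; order them as $\lambda_1 \geq \lambda_2 \geq \cdots$, with the convention that only the nonzero ones matter (a zero eigenvalue contributes trivially to each side). The hypothesis that $\text{Id}-B$ is positive definite translates exactly to $\lambda_j < 1$ for every $j$, and the standard spectral formulas give $\det(\text{Id}-B) = \prod_j(1-\lambda_j)$ and $\Tr B = \sum_j \lambda_j$ as finite products and sums.

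The key step is to peel off the leading factor $(1-\lambda_1)$ and apply the elementary bound $1-x \leq e^{-x}$ (valid for all real $x$) to the remaining factors. This gives
\begin{equation*}
\det(\text{Id}-B)\,e^{\Tr B - 1} = (1-\lambda_1)\Bigl(\prod_{j\geq 2}(1-\lambda_j)\Bigr)e^{\sum_j \lambda_j - 1} \leq (1-\lambda_1)\,e^{-\sum_{j\geq 2}\lambda_j}\,e^{\sum_j \lambda_j - 1} = (1-\lambda_1)\,e^{\lambda_1 - 1}.
\end{equation*}
Because $\lambda_1 < 1$, the factor $e^{\lambda_1 - 1}$ is at most $1$, and the bound $(1-\lambda_1)e^{\lambda_1 - 1} \leq 1 - \lambda_1$ closes the argument.

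I do not foresee any genuine obstacle: the whole proof is one application of $1-x \leq e^{-x}$ to all but the top eigenvalue, followed by the trivial bound $e^{\lambda_1-1}\leq 1$. The only thing worth noting is that finite rank combined with $\lambda_j < 1$ makes both $\prod_j(1-\lambda_j)$ and $\sum_j \lambda_j$ well-defined finite quantities, so no convergence or domain issue arises; if one prefers, the argument can be carried out on the finite-dimensional invariant subspace containing the range of $B$, where $B$ acts as an ordinary symmetric matrix.
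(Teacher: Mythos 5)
Your proof is correct and is essentially identical to the paper's: both diagonalize $B$, isolate the factor $(1-\lambda_1)$, bound the remaining eigenvalue contributions by the elementary inequality $(1-\lambda)e^{\lambda}\leq 1$ (equivalently $1-\lambda\leq e^{-\lambda}$), and finish with $e^{\lambda_1-1}\leq 1$. The only difference is cosmetic bookkeeping in how the exponential terms are paired with the product terms.
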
\begin{proof}Let $\lambda_k(B) $ be the eigenvalues of $B$, then we have $\lambda_k(B)<1$ and\begin{align*}&\det(\text{Id} -B)e^{\Tr B-1}=\prod_k(1-\lambda_k(B))e^{\sum_k\lambda_k(B)-1}.\end{align*} Using the fact that $0<(1-\lambda)e^{\lambda}\leq 1$ for $\lambda<1$ again, we have\begin{align*}&\det(\text{Id} -B)e^{\Tr B-1}=e^{-1}\prod_k(1-\lambda_k(B))e^{\lambda_k(B)}\\=&(1-\lambda_1(B))e^{\lambda_1(B)-1}\prod_{k\neq 1}(1-\lambda_k(B))e^{\lambda_k(B)}\\ \leq &(1-\lambda_1(B))e^{\lambda_1(B)-1}\leq1-\lambda_1(B).\end{align*}This completes the proof.\end{proof}Recall the definitions of $B$ and $B_j$ in \eqref{defi}, assume $0\neq f\in L^2([0,2\pi))$ such that $ Bf=\lambda_1(B)f$ where 
	$\lambda_1(B)$ is the largest eigenvalue of $B$,  then we have\begin{align*}&\lambda_1(B)f=Bf=\sum\limits_{j=1}^kB_jf.\end{align*}
	
	For $n>N_1,\ i\neq j,$ by definition we have $ I(y_i, F_n(x_i))\cap I(y_j, F_n(x_j))=\emptyset$ and  then $\chi_{I(y_i, F_n(x_i))}B_j=0, $ thus we further have  \begin{align}\label{simd}&\lambda_1(B)\chi_{I(y_i, F_n(x_i))}f=\chi_{I(y_i, F_n(x_i))}Bf\\\nonumber &=\chi_{I(y_i, F_n(x_i))}B_if=B_if=B_i\chi_{I(y_i, F_n(x_i))}f, \end{align}
	i.e., $\chi_{I(y_i, F_n(x_i))}f$ is an eigenfunction of $B_i$ and its largest eigenvalue $\lambda_1(B_i)\geq \lambda_1(B)$.
	
	If $\chi_{I(y_i, F_n(x_i))}f\neq 0 $ for some $1\leq i\leq k,$ then by Lemma \ref{lem18} we have $1-\lambda_1(B)\geq1-\lambda_1(B_i) \geq \det(\text{Id} -B_i)e^{\Tr B_i-1}.$ Notice that $$\det(\text{Id} -B_i)=\mathbb{P}(\xi^{(n)}(I(y_i, F_n(x_i)))=0)=D_n(F_n(x_i)/2),$$ $$K_n(x,x)=n/(2\pi),$$ and
	$$\Tr B_i=\int_{I(y_i, F_n(x_i))}K_n(x,x)dx=nF_n(x_i)/(2\pi),$$ thus we have $$1-\lambda_1(B)\geq D_n(F_n(x_i)/2)e^{nF_n(x_i)/(2\pi)-1}.$$ By \eqref{11}\eqref{2} and $32>\pi^2$, there exists a constant $N_2>N_1$ such that $nF_n(x_i)>\pi(\ln n)^{\frac{1}{2}}$ and $n(4\ln n)^{\frac{1}{2}}D_n(F_n(x_i)/2)>e^{c_0-x_i}$ for  $1\leq i\leq k.$ Thus, we further have \begin{align*}&1-\lambda_1(B)\geq n^{-1}(4\ln n)^{-\frac{1}{2}}e^{c_0-x_i}e^{(\ln n)^{\frac{1}{2}}/2-1}.\end{align*} If $\chi_{I(y_i, F_n(x_i))}f= 0 $ for every $1\leq i\leq k,$ then we have $B_i f=0,$ and thus  $\lambda_1(B)f=0,$  $\lambda_1(B)=0,\ 1-\lambda_1(B)=1.$ In both cases for $n>N_2$ we always have $$1-\lambda_1(B)\geq \min(1,n^{-1}(4\ln n)^{-\frac{1}{2}}e^{c_0-\max\{x_j|1\leq j\leq k\}}e^{(\ln n)^{\frac{1}{2}}/2-1}),$$ therefore, \begin{align*}\|(\text{Id} -B)^{-1}\|&=(1-\lambda_1(B))^{-1}\leq 1+n(4\ln n)^{\frac{1}{2}}e^{\max\{x_j|1\leq j\leq k\}-c_0}e^{1-(\ln n)^{\frac{1}{2}}/2}\\ &\leq 1+O(n(\ln n)^{\frac{1}{2}}e^{-(\ln n)^{\frac{1}{2}}/2})=O(n(\ln n)^{\frac{1}{2}}e^{-(\ln n)^{\frac{1}{2}}/2}),\end{align*}which finishes the second inequality in \eqref{52}, and hence, we finish the proof of \eqref{42} in Lemma \ref{splt}.
	
	Now we can use \eqref{42} to prove the lower bound \eqref{14}. For $n>N_1$, by \eqref{19} we have \begin{align}\label{20}&\phi_{k,n}(y_1,\cdots,y_k)\geq (n/4)^k(2\ln n)^{\frac{k}{2}}\mathbb{P}(\xi^{(n)}(I_{n,k})=0)\\ \nonumber&-(n/4)^k(2\ln n)^{\frac{k}{2}}\sum_{j=1}^k\mathbb{P}(\xi^{(n)}(I_{n,k})=\xi^{(n)}(J_{n,k,j})=0).
	\end{align}Now we claim that $$(n/4)^k(2\ln n)^{\frac{k}{2}}\mathbb{P}(\xi^{(n)}(I_{n,k})=\xi^{(n)}(J_{n,k,j})=0)\to 0, \,\,\,n\to+\infty.$$ Let $x_0=\min\{x_j|1\leq j\leq k\},$ then we have $F(x_j)\geq F(x_0)$,\ $I_{n,k}\supseteq\cup_{j=1}^kI(y_j, F_n(x_0))\\=\cup_{j=1}^kI(z_j, F_n(x_0)) $. Therefore, we have  $I_{n,k}\cup J_{n,k,j}\supseteq J_{n,k,j}\cup \left(\cup_{i\neq j}I(z_i, F_n(x_0))\right),$ and the right hand side is a disjoint union for $n>N_1$. If $k=1$, then $ J_{n,k,j}=(z_1,z_1+2\pi)(\text{mod}\ 2\pi)$ and $\mathbb{P}(\xi^{(n)}(I_{n,k})=\xi^{(n)}(J_{n,k,j})=0)=\mathbb{P}(\xi^{(n)}((z_1,z_1+2\pi)(\text{mod}\ 2\pi))=0)=0. $ If $k>1,$ by \eqref{16} and \eqref{17} we have \begin{align*}0\leq&\mathbb{P}(\xi^{(n)}(I_{n,k})=\xi^{(n)}(J_{n,k,j})=0)=\mathbb{P}(\xi^{(n)}(I_{n,k}\cup J_{n,k,j})=0)\\ \leq&\mathbb{P}(\xi^{(n)}(J_{n,k,j}\cup \left(\cup_{i\neq j}I(z_i, F_n(x_0))\right))=0)\\ \leq&\mathbb{P}(\xi^{(n)}(J_{n,k,j})=0)\prod_{i\neq j}\mathbb{P}(\xi^{(n)}(I(z_i, F_n(x_0)))=0)\\=& D_n((z_{j+1}-z_j)/2)(D_n(F_n(x_0)/2))^{k-1}.
	\end{align*}Thus by \eqref{12} and \eqref{2}, we have\begin{align*}&0\leq \limsup_{n\to+\infty}(n/4)^k(2\ln n)^{\frac{k}{2}}\mathbb{P}(\xi^{(n)}(I_{n,k})=\xi^{(n)}(J_{n,k,j})=0)\\&\leq \lim_{n\to+\infty}(n/4)(2\ln n)^{\frac{1}{2}}D_n((z_{j+1}-z_j)/2)\left(\lim_{n\to+\infty}(n/4)(2\ln n)^{\frac{1}{2}}D_n(F_n(x_0)/2)\right)^{k-1}\\&=0\cdot\left(e^{c_0-x_0}/4\right)^{k-1}=0,\,\,\,\,  \forall\,\, 1\leq j\leq k,
	\end{align*}   which implies the claim. Therefore, combining the cases $ k=1$ and $k>1$, using \\ \eqref{2}\eqref{17}\eqref{42}\eqref{20}, we have \begin{align*}&\liminf_{n\to+\infty}\phi_{k,n}(y_1,\cdots,y_k)\\ \geq & \liminf_{n\to+\infty}(n/4)^k(2\ln n)^{\frac{k}{2}}\mathbb{P}(\xi^{(n)}(I_{n,k})=0)\\=&\liminf_{n\to+\infty}(n/4)^k(2\ln n)^{\frac{k}{2}}\prod_{j=1}^k\mathbb{P}(\xi^{(n)}(I(y_j, F_n(x_j)))=0)\\=&\liminf_{n\to+\infty}(n/4)^k(2\ln n)^{\frac{k}{2}}\prod_{j=1}^kD_n(F_n(x_j)/2)\\=&\prod_{j=1}^k\lim_{n\to+\infty}(n(2\ln n)^{\frac{1}{2}}D_n(F_n(x_j)/2)/4)=\prod_{j=1}^k(e^{c_0-x_j}/4),
	\end{align*} which is the lower bound \eqref{14}. Therefore, we finish the proof of Theorem \ref{thm1}.
	\section{The GUE case}\label{gu7}
	In this section, let's denote $\mathbb P^{CUE(n)}$ (or $\mathbb P^{GUE(n)}$)  as the probability taken with respect to the Haar measure of $U(n)$ (or GUE), when we drop the superscript,  the expectation $\mathbb E$ and the probability $\mathbb P$ are taken with respect to GUE.
	\subsection{Another rescaling limit}
	We first need another rescaling limit of $D_n(\alpha)$. Let's denote\begin{equation}\label{gx} G_n(x)=\frac{8x-5\ln(2\ln n)}{2n(2\ln n)^{\frac{1}{2}}}+\frac{(32\ln n)^{\frac{1}{2}}}{n}.\end{equation} Given a compact subinterval $I=[a,b]$ in $(-2,2)$, let's denote $S(I)=\inf_I\sqrt{4-x^2}$, then we have $$S(I)m_k^*=G_n(\tau_k^*),$$
	where $m_k^*$ and $\tau_k^*$ are as defined in Theorem \ref{thm2}.
	
	From the definition of $G_n(x) $ we have\begin{align}\label{30}y-x=(G_n(y)-G_n(x))(n/4)(2\ln n)^{\frac{1}{2}}, \end{align} and for every fixed $x,$ \begin{align}\label{31}\lim\limits_{n\to+\infty}\dfrac{n G_n(x)}{(32\ln n)^{\frac{1}{2}}}=1,\ \lim\limits_{n\to+\infty}n G_n(x)=+\infty,\ \lim\limits_{n\to+\infty}n^{\gamma} G_n(x)=0,\ \forall\ \gamma<1.\end{align}Now we need the following  rescaling limit which is similar to \eqref{2}. \begin{lem}\label{another}For fixed $x, z \in \mathbb R$, we have \begin{align}\label{21}\lim_{n\to+\infty}n(2\ln n)^{-\frac{1}{2}} D_n((1+z/\ln n)G_n(x)/2)=e^{c_0-x-2z}.
	\end{align}\end{lem}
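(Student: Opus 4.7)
The plan is to imitate the proof of Lemma \ref{obs}, with the extra factor $1+z/\ln n$ producing the additional $-2z$ in the exponent via the quadratic Taylor term.

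First I would set $\alpha_n=(1+z/\ln n)G_n(x)/2$ and check the basic regime. By \eqref{31}, $\alpha_n\to 0$ and $n\alpha_n\to+\infty$, so $s_0/n<\alpha_n<\pi-\varepsilon$ for large $n$, and $1/(n\sin(\alpha_n/2))\to 0$ by the same computation as in \eqref{dds}. Hence \eqref{1} yields
$$\ln D_n(\alpha_n)=n^2\ln\cos(\alpha_n/2)-\frac{1}{4}\ln(n\sin(\alpha_n/2))+c_0+o(1).$$

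The log-sine term is handled exactly as in \eqref{4}: since $n\alpha_n/2\sim nG_n(x)/4\sim (32\ln n)^{1/2}/4=(2\ln n)^{1/2}$, one obtains $\tfrac{1}{4}\ln(n\sin(\alpha_n/2))=\tfrac{1}{8}\ln(2\ln n)+o(1)$.

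The crucial step is the Taylor expansion $\ln\cos y=-y^2/2+O(y^4)$. The quartic remainder vanishes because $n^2\alpha_n^4=O((nG_n(x))^4/n^2)=O((\ln n)^2/n^2)\to 0$. For the quadratic term, I would substitute \eqref{gx} directly and expand, in parallel with the calculation preceding \eqref{5},
$$\frac{n^2 G_n^2(x)}{32}=\ln n+x-\frac{5}{8}\ln(2\ln n)+o(1).$$
The new feature is the factor $(1+z/\ln n)^2=1+2z/\ln n+O((\ln n)^{-2})$. Multiplying this against $n^2G_n^2(x)/32\sim\ln n$, the cross term $(2z/\ln n)\cdot n^2 G_n^2(x)/32$ contributes exactly $2z$, while the $O((\ln n)^{-2})$ correction contributes $o(1)$. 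This gives
$$n^2\ln\cos(\alpha_n/2)=-\ln n-x-2z+\frac{5}{8}\ln(2\ln n)+o(1).$$

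Combining the three asymptotics yields $\ln D_n(\alpha_n)=-\ln n-x-2z+\tfrac{1}{2}\ln(2\ln n)+c_0+o(1)$, which rearranges to $\ln(n(2\ln n)^{-1/2}D_n(\alpha_n))\to c_0-x-2z$, i.e.\ \eqref{21}. The main obstacle is just the bookkeeping at the cross term: a correction of only size $z/\ln n$ in the argument of $D_n$ produces an $O(1)$ shift in the exponent precisely because $n^2G_n^2(x)$ is of order $\ln n$. Once this is in place, every other step is parallel to Lemma \ref{obs}, the only other visible change being that the log-sine term now combines with $\tfrac{5}{8}\ln(2\ln n)$ (rather than $-\tfrac{3}{8}\ln(2\ln n)$ in the $F_n$ case) to produce the prefactor $(2\ln n)^{-1/2}$ rather than $(2\ln n)^{1/2}$ on the left-hand side of \eqref{21}.
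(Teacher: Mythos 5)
Your proposal is correct and follows essentially the same route as the paper's own proof: set $\alpha_n=(1+z/\ln n)G_n(x)/2$, apply \eqref{1}, handle the log-sine term as in \eqref{4}/\eqref{24}, and isolate the $2z$ from the cross term $(2z/\ln n)\cdot n^2G_n^2(x)/32\to 2z$ exactly as the paper does in the display preceding \eqref{25}. Your closing observation about the sign flip from $+\tfrac{3}{8}\ln(2\ln n)$ to $-\tfrac{5}{8}\ln(2\ln n)$ driving the change of normalization from $(2\ln n)^{1/2}$ to $(2\ln n)^{-1/2}$ is also accurate.
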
\begin{proof}Let $\alpha_n=(1+z/\ln n)G_n(x)/2,$ then by \eqref{31} we have $\alpha_n\to 0,\ n\alpha_n\to+\infty$ as $ n\to+\infty,$ thus $s_0/n<\alpha_n<\pi-\varepsilon $ for $n$ sufficiently large. Therefore,  \eqref{dds} holds for such $\alpha_n$, and  we still have
		\begin{align}\label{23}\lim\limits_{n\to+\infty}\left(\ln D_n(\alpha_n)-n^2\ln\cos\frac{\alpha_n}{2}+\frac{1}{4}\ln\left(n\sin\frac{\alpha_n}{2}\right)-c_0\right)=0.
		\end{align}
		By \eqref{31} we have\begin{align*}&\lim\limits_{n\to+\infty}\frac{(2\ln n)^{\frac{1}{2}}}{n\sin(\alpha_n/2)}=\lim\limits_{n\to+\infty}\frac{(2\ln n)^{\frac{1}{2}}}{n\alpha_n/2}\\&=\lim\limits_{n\to+\infty}\frac{(2\ln n)^{\frac{1}{2}}}{n(1+z/\ln n)G_n(x)/4}=\lim\limits_{n\to+\infty}\frac{(32\ln n)^{\frac{1}{2}}}{nG_n(x)}=1,\end{align*} and thus we have \begin{align}\label{24}\lim\limits_{n\to+\infty}\left(\frac{1}{8}\ln (2\ln n)-\frac{1}{4}\ln\left(n\sin\frac{\alpha_n}{2}\right)\right)=0.
		\end{align}By \eqref{31} and Taylor expansion of $\ln \cos y$ as $y\to 0$, we have \begin{align*}n^2\ln\cos\frac{\alpha_n}{2}+\frac{n^2\alpha_n^2}{8}=n^2O(\alpha_n^4)=n^2O(G_n^4(x))\to 0,\end{align*} and\begin{align*}\frac{n^2G_n^2(x)}{32}&=\frac{32\ln n}{32}+\frac{8x-5\ln(2\ln n)}{(2\ln n)^{\frac{1}{2}}}\frac{(32\ln n)^{\frac{1}{2}}}{32}+\frac{(8x-5\ln(2\ln n))^2}{32\cdot4\cdot(2\ln n)}\\&=\ln n+\frac{8x-5\ln(2\ln n)}{8}+o(1),
		\end{align*} and\begin{align*}&\frac{n^2\alpha_n^2}{8}-\frac{n^2G_n^2(x)}{32}=\frac{n^2(1+z/\ln n)^2G_n(x)^2}{32}-\frac{n^2G_n^2(x)}{32}\\=&\frac{(z/\ln n)(2+z/\ln n)n^2G_n(x)^2}{32}\\=&(z/\ln n)(2+z/\ln n)(\ln n+o(\ln n))\to2z\end{align*}as $n\to+\infty$,  which implies \begin{align}\label{25}\lim\limits_{n\to+\infty}\left(n^2\ln\cos\frac{\alpha_n}{2}+\ln n+x-\frac{5\ln(2\ln n)}{8}+2z\right)=0.
		\end{align}By \eqref{23}\eqref{24}\eqref{25} we have\begin{align*}\lim\limits_{n\to+\infty}\left(\ln D_n(\alpha_n)+\ln n+x+2z-\frac{\ln(2\ln n)}{2}-c_0\right)=0,\end{align*}and thus we have \begin{align*}\lim\limits_{n\to+\infty}\ln\left(n(2\ln n)^{-\frac{1}{2}} D_n(\alpha_n)\right)= c_0-x-2z.\end{align*}As $\alpha_n=(1+z/\ln n)G_n(x)/2$, the above limit is equivalent to \begin{align*}\lim_{n\to+\infty}n(2\ln n)^{-\frac{1}{2}} D_n((1+z/\ln n)G_n(x)/2)=e^{ c_0-x-2z},\end{align*}this completes the proof of \eqref{21}.\end{proof}
	\subsection{One integral lemma} In this subsection, we will prove one integral Lemma \ref{lem21} which will be used in the proof of Theorem \ref{thm2}. We first have the  bound, \begin{lem}\label{lem20}For every fixed $x\in\mathbb{R}$ and $A>1,$ there exists a constant $N_3>0$ depending only on $x,A$ such that for $n>N_3,\ w\in[1,A]$, we have $s_0/n<G_n(x)/2<AG_n(x)/2<\pi/2 $ and $ D_n(wG_n(x)/2)\leq e^{1-(w-1)\ln n}D_n(G_n(x)/2).$\end{lem}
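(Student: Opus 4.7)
The plan is to take logarithms and derive the estimate from a direct comparison via the uniform expansion \eqref{1}. The first statement $s_0/n<G_n(x)/2<AG_n(x)/2<\pi/2$ is immediate from \eqref{31}: since $nG_n(x)\to+\infty$ and $G_n(x)\to 0$, both endpoints lie in the admissible range of \eqref{1} for all $w\in[1,A]$ once $n$ is large, with $N_3$ depending only on $x$ and $A$.

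Set $\alpha_n=G_n(x)/2$ and apply \eqref{1} at $w\alpha_n$ and $\alpha_n$. The error terms are uniformly $o(1)$ for $w\in[1,A]$ because $1/(n\sin(w\alpha_n/2))\leq 1/(n\sin(\alpha_n/2))\to 0$ by \eqref{31}. Using $\ln\cos y=-y^2/2+O(y^4)$ and $\ln(\sin y/y)=O(y^2)$ on $(0,\pi/4)$, together with $n^2\alpha_n^4=O((\ln n)^2/n^2)\to 0$, the difference reduces to
\begin{align*}
\ln D_n(w\alpha_n)-\ln D_n(\alpha_n)=-\frac{n^2(w^2-1)\alpha_n^2}{8}-\frac{1}{4}\ln w+o(1)
\end{align*}
uniformly in $w\in[1,A]$. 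Next, the computation in the proof of Lemma \ref{another} gives
\begin{align*}
\frac{n^2\alpha_n^2}{8}=\ln n+\frac{8x-5\ln(2\ln n)}{8}+o(1),
\end{align*}
so substituting and using the identity $(w^2-1)-(w-1)=w(w-1)$ to regroup, adding $(w-1)\ln n$ to both sides yields
\begin{align*}
\ln D_n(w\alpha_n)-\ln D_n(\alpha_n)+(w-1)\ln n=(w-1)\left[-w\ln n+\frac{5(w+1)\ln(2\ln n)}{8}-(w+1)x\right]-\frac{1}{4}\ln w+o(1).
\end{align*}

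For $w\in[1,A]$ the bracketed factor is bounded above by $-\ln n+\frac{5(A+1)\ln(2\ln n)}{8}+(A+1)|x|$, which is negative once $n$ exceeds a threshold $N_3=N_3(x,A)$. Since $w\geq 1$ makes both $(w-1)\cdot[\text{bracket}]$ and $-\frac{1}{4}\ln w$ nonpositive, the right-hand side is at most $o(1)\leq 1$ for $n>N_3$, and exponentiating gives the desired inequality. The only real obstacle is bookkeeping the uniform error terms and performing the algebraic rearrangement that isolates the leading negative term $-w(w-1)\ln n$; the key structural fact is $w^2-1\geq w-1$ for $w\geq 1$, which provides the extra factor of $w$ that allows the single power $-\ln n$ on the right-hand side to absorb the competing $O(\ln\ln n)$ and $O(|x|)$ contributions.
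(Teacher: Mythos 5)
Your argument is correct and reaches the required inequality, but it takes a different route than the paper. You Taylor-expand $\ln\cos$ and $\ln\sin$ to second order with tracked $o(1)$ errors, substitute the asymptotics $n^2\alpha_n^2/8=\ln n+(8x-5\ln(2\ln n))/8+o(1)$, and then isolate the negative leading term via the identity $(w^2-1)-(w-1)=w(w-1)$; the conclusion follows because the bracketed factor is negative for $n$ large, uniformly over $w\in[1,A]$. The paper instead avoids any Taylor expansion: it writes $F(y)=\ln\cos(y/2)$, uses the derivative bound $F'(y)=-\tan(y/2)/2<-y/4\leq-\alpha_n/4$ on $[\alpha_n,A\alpha_n]$, and concludes via the mean value theorem that $n^2(F(w\alpha_n)-F(\alpha_n))\leq -(w-1)n^2\alpha_n^2/4$; it then drops the $-\tfrac14\ln(n\sin)$ difference entirely since $\sin$ is increasing, and uses only the one-sided fact $n^2\alpha_n^2>4\ln n$. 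The paper's version is shorter and exploits that the true leading coefficient is about twice what is needed, so a crude one-sided bound with slack suffices; your version is sharper and more mechanical, at the cost of having to verify uniformity in $w$ of several $o(1)$ terms (which you do correctly since everything is $O((\ln n)^2/n^2)$ or $O(\alpha_n^2)$ with $w$-dependence bounded by powers of $A$). Both are valid; the net effect is the same.
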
\begin{proof}Let $\alpha_n=G_n(x)/2$, then by \eqref{31} we have $\alpha_n\to 0,\ n\alpha_n\to+\infty$ as $ n\to+\infty,$ thus there exists a constant $N_{3,0}>0$ such that $s_0/n<\alpha_n<w\alpha_n\leq A\alpha_n<\pi/2 $ for $n>N_{3,0}$ and\begin{align*}\lim\limits_{n\to+\infty}\sup_{w\in[1,A]}\frac{1}{n\sin(w\alpha_n/2)}=
			\lim\limits_{n\to+\infty}\frac{1}{n\sin(\alpha_n/2)}=\lim\limits_{n\to+\infty}\frac{2}{n\alpha_n}=0.\end{align*}By \eqref{1} there exists a constant $N_{3,1}>N_{3,0}$ such that\begin{align*}\left|\ln D_n(w\alpha_n)-n^2\ln\cos\frac{w\alpha_n}{2}+\frac{1}{4}\ln\left(n\sin\frac{w\alpha_n}{2}\right)-c_0\right|<1/2
		\end{align*}for $n>N_{3,1},\ z\in[1,A],$ thus we have \begin{align*}&\ln (D_n(w\alpha_n)/D_n(\alpha_n))=\ln D_n(w\alpha_n)-\ln D_n(\alpha_n)\\ \leq& n^2\ln\cos\frac{w\alpha_n}{2}-\frac{1}{4}\ln\left(n\sin\frac{w\alpha_n}{2}\right) -n^2\ln\cos\frac{\alpha_n}{2}+\frac{1}{4}\ln\left(n\sin\frac{\alpha_n}{2}\right)+1 .\end{align*} Let's denote $F(y)=\ln\cos(y/2)$, 
		since $\sin\frac{w\alpha_n}{2}\geq \sin\frac{\alpha_n}{2}$, we  further  have\begin{align*}&\ln \frac{D_n(w\alpha_n)}{D_n(\alpha_n)}\leq n^2\ln\cos\frac{w\alpha_n}{2} -n^2\ln\cos\frac{\alpha_n}{2}+1=n^2(F(w\alpha_n)-F(\alpha_n))+1 .\end{align*} Since $F'(y)=-\tan(y/2)/2<-y/4\leq -\alpha_n/4$ for $ n>N_{3,1},\ y\in[\alpha_n,A\alpha_n]\subset(0,\pi),$ we have $F(w\alpha_n)-F(\alpha_n)\leq -(w\alpha_n-\alpha_n)\alpha_n/4 $ and thus \begin{align*}&\ln \frac{D_n(w\alpha_n)}{D_n(\alpha_n)}\leq -n^2(w\alpha_n-\alpha_n)\alpha_n/4+1 =-(w-1)\frac{n^2\alpha_n^2}{4}+1\end{align*}for $n>N_{3,1},\ w\in[1,A].$ By \eqref{31} we have\begin{align*}\frac{n^2\alpha_n^2}{4\ln n}=\frac{n^2G_n^2(x)}{16\ln n}\to 2\end{align*} as $n\to+\infty$, and there exists a constant $N_{3}>N_{3,1}$ such that $n^2\alpha_n^2>4\ln n $ for $n>N_3,$ which implies\begin{align*}&\ln (D_n(w\alpha_n)/D_n(\alpha_n))\leq -(w-1)\ln n+1.\end{align*}
		As $\alpha_n=G_n(x)/2$, for $n>N_3>N_{3,0}$ and $w\in[1,A]$, we have\begin{align*}&D_n(wG_n(x)/2)=D_n(w\alpha_n)=\exp(\ln (D_n(w\alpha_n)/D_n(\alpha_n)))D_n(\alpha_n)\\ \leq& e^{-(w-1)\ln n+1}D_n(\alpha_n)=e^{1-(w-1)\ln n}D_n(G_n(x)/2),\end{align*} this completes the proof.\end{proof}
	
	Using \eqref{21} and Lemma \ref{lem20}, we have the limit of the integral, \begin{lem}\label{lem21}For  $I=[a,b]\subset (-2,2)$,  let $S(I)=\inf_I\sqrt{4-y^2},$ then we have\begin{align*}\lim_{n\to+\infty}n(2\ln n)^{\frac{1}{2}}\int_ID_n(\sqrt{4-y^2}/S(I) \cdot G_n(x)/2)dy=M(I)e^{c_0-x},\end{align*}where $M(I)=(4-a^2)/|a|$ if $a+b<0,$ $M(I)=(4-b^2)/|b|$ if $a+b>0,$ and $M(I)=2(4-a^2)/|a|$ if $a+b=0.$\end{lem}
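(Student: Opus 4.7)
The strategy is to localize the integral to a neighborhood of size $O(1/\ln n)$ around the point(s) where $h(y):=\sqrt{4-y^2}/S(I)$ achieves its minimum value $1$, and then invoke Lemma \ref{another} together with the bound of Lemma \ref{lem20} to pass to the limit by dominated convergence. I would first treat the case $a+b<0$, where $S(I)=\sqrt{4-a^2}$ is attained only at $y=a$; one checks that $h(y)\geq 1$ throughout $[a,b]$ with $h(a)=1$ and $h'(a)=|a|/(4-a^2)>0$. Performing the change of variables $y=a+s/\ln n$, $dy=ds/\ln n$, and using $(2\ln n)^{1/2}/\ln n=2\cdot(2\ln n)^{-1/2}$, one rewrites the target integral as
\begin{align*}
n(2\ln n)^{1/2}\int_I D_n(h(y)G_n(x)/2)\,dy=2\int_0^{(b-a)\ln n}n(2\ln n)^{-1/2}D_n((1+z_n(s)/\ln n)G_n(x)/2)\,ds,
\end{align*}
where $z_n(s):=(h(a+s/\ln n)-1)\ln n$ satisfies $z_n(s)\to h'(a)s$ for each fixed $s>0$.

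For each fixed $s$, the inner expression converges to $e^{c_0-x-2h'(a)s}$ by Lemma \ref{another}, whose proof is robust enough to allow $z$ in its statement to be replaced by a sequence $z_n\to z_\infty$ (the only $z$-dependence in the Taylor estimates of \eqref{25} is smooth and confined to the term $(z/\ln n)(2+z/\ln n)(\ln n+o(\ln n))\to 2z_\infty$). For the dominated convergence step, note that $(h(y)-1)/(y-a)$ extends continuously and strictly positively to $[a,b]$, so it is bounded below by some constant $C_1>0$, giving $z_n(s)\geq C_1 s$. Taking $A=\sup_{y\in[a,b]}h(y)<+\infty$, Lemma \ref{lem20} applies and gives
\begin{align*}
D_n(h(y)G_n(x)/2)\leq e^{1-z_n(s)}D_n(G_n(x)/2),
\end{align*}
so that, combined with Lemma \ref{another} at $z=0$ (which bounds $n(2\ln n)^{-1/2}D_n(G_n(x)/2)$ for large $n$), the integrand is dominated by a constant multiple of $e^{-C_1 s}$, which is integrable on $[0,+\infty)$. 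Passing to the limit and computing the exponential integral yields
\begin{align*}
2\int_0^{+\infty}e^{c_0-x-2h'(a)s}\,ds=\frac{e^{c_0-x}}{h'(a)}=\frac{(4-a^2)e^{c_0-x}}{|a|}=M(I)e^{c_0-x}.
\end{align*}

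The case $a+b>0$ is entirely symmetric: using the substitution $y=b-s/\ln n$ localizes the integral near $y=b$, and the same analysis produces $M(I)=(4-b^2)/|b|$. For $a+b=0$ the minimum value $1$ is attained at both endpoints, so I would split $I=[a,0]\cup[0,b]$ and apply the one-sided analysis near each endpoint; since $(4-a^2)/|a|=(4-b^2)/|b|$ in this symmetric situation, the two contributions add to give $M(I)=2(4-a^2)/|a|$. The principal obstacle is establishing the dominated-convergence bound uniformly in $s$ on the full rescaled interval $[0,(b-a)\ln n]$, not merely in a small neighborhood of the minimizer; this is exactly the role of Lemma \ref{lem20}, which translates the geometric growth of $h-1$ away from the minimizer into exponential decay of $D_n(h(y)G_n(x)/2)$ in the excess parameter $(h(y)-1)\ln n$.
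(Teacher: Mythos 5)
Your proposal is correct, and it follows the same localize-and-rescale strategy as the paper's proof: both reduce the integral to a neighborhood of the minimizer of $h(y)=\sqrt{4-y^2}/S(I)$ that scales like $1/\ln n$, apply Lemma \ref{another} to identify the pointwise limit, and use Lemma \ref{lem20} to supply the exponentially decaying dominating function. The main technical difference is in the change of variables. The paper first substitutes $y=-\sqrt{4-z^2}$ (which makes $h$ \emph{linear} in the new variable) and then $z=(1+u/\ln n)S(I)$, so that Lemma \ref{another} can be cited literally at $z=u$ with $u$ fixed; it also splits off a subinterval $I_2=[b_0,b]$ away from the minimizer and shows that contribution vanishes separately. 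You instead use the single affine substitution $y=a+s/\ln n$, which is more direct but forces the argument $z_n(s)=(h(a+s/\ln n)-1)\ln n$ appearing in the kernel to depend on $n$; this requires noting that Lemma \ref{another}'s proof is stable under replacing a fixed $z$ by a sequence $z_n\to z_\infty$ with $z_n/\ln n\to 0$, a point you correctly flag and justify. Your uniform lower bound $(h(y)-1)/(y-a)\geq C_1>0$ on $[a,b]$ (valid because this ratio extends continuously and positively to the compact interval) makes the dominated-convergence argument work over the entire range in one pass, obviating the paper's $I_1/I_2$ split. Both routes land on $\int_0^\infty 2e^{c_0-x-2h'(a)s}\,ds=e^{c_0-x}/h'(a)=M(I)e^{c_0-x}$. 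Your version is marginally cleaner, at the modest cost of having to re-examine the interior of Lemma \ref{another} rather than invoking it as a black box.
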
\begin{proof}{\bf Case 1:} $a+b<0.$ In this case we have $a<0,\ S(I)=\sqrt{4-a^2}.$ Let $A=2/S(I)$, then we have $1\leq \sqrt{4-y^2}/S(I)\leq 2/S(I)=A $ for $y\in I.$ Let $N_3$ be determined in Lemma \ref{lem20} with $w=\sqrt{4-y^2}/S(I)\in [1, A]$, for $n>N_3,$  we have \begin{align*} D_n(\sqrt{4-y^2}/S(I) \cdot G_n(x)/2)\leq e^{1-(\sqrt{4-y^2}/S(I)-1)\ln n}D_n(G_n(x)/2). \end{align*}
		Let  $b_0=(a+b)/2$, then we have $a<b_0<\min(b,0)$ and we can write $I=I_1\cup I_2$ such that $I_1=[a,b_0]$, $I_2=[b_0,b].$ Now we have for $n$ large enough, \begin{align*}&n(2\ln n)^{\frac{1}{2}}\int_{I_2}D_n(\sqrt{4-y^2}/S(I) \cdot G_n(x)/2)dy\\ \leq&n(2\ln n)^{\frac{1}{2}} \int_{I_2}e^{1-(\sqrt{4-y^2}/S(I)-1)\ln n}D_n(G_n(x)/2)dy\\ \leq&n(2\ln n)^{\frac{1}{2}} \int_{I_2}e^{1-(S(I_2)/S(I)-1)\ln n}D_n(G_n(x)/2)dy\\=&n(2\ln n)^{\frac{1}{2}}(b-b_0)e^{1-(S(I_2)/S(I)-1)\ln n}D_n(G_n(x)/2),\end{align*} where  $S(I_2)=\min(\sqrt{4-b_0^2},\sqrt{4-b^2})>S(I)>0$. By \eqref{21},  we have \begin{align*}&\lim_{n\to+\infty}n(2\ln n)^{\frac{1}{2}}e^{1-(S(I_2)/S(I)-1)\ln n}D_n(G_n(x)/2)\\=&\lim_{n\to+\infty}(2\ln n)e^{1-(S(I_2)/S(I)-1)\ln n}\lim_{n\to+\infty}n(2\ln n)^{-\frac{1}{2}}D_n(G_n(x)/2)=0,\end{align*}which implies\begin{align}\label{26}\lim_{n\to+\infty}n(2\ln n)^{\frac{1}{2}}\int_{I_2}D_n(\sqrt{4-y^2}/S(I) \cdot G_n(x)/2)dy=0.\end{align}
		As to the integration in $I_1$, we change variable $y=-\sqrt{4-z^2}$ to obtain\begin{align*}&\int_{I_1}D_n(\sqrt{4-y^2}/S(I) \cdot G_n(x)/2)dy= \int_{a_1}^{b_1}D_n(z/S(I) \cdot G_n(x)/2)\frac{z}{\sqrt{4-z^2}}dz\\ =& S(I)(\ln n)^{-1}\int_{0}^{(b_1/a_1-1)\ln n}D_n((1+z/\ln n) G_n(x)/2)\frac{(1+z/\ln n)a_1}{\sqrt{4-(1+z/\ln n)^2a_1^2}}dz,\end{align*}here $a_1=\sqrt{4-a^2}=S(I),$ $b_1=\sqrt{4-b_0^2}>a_1,$ thus we have \begin{align*}&n(2\ln n)^{\frac{1}{2}}\int_{I_1}D_n(\sqrt{4-y^2}/S(I) \cdot G_n(x)/2)dy=n(2\ln n)^{-\frac{1}{2}}D_n(G_n(x)/2)\times\\ &2S(I)\int_{0}^{(b_1/a_1-1)\ln n}\frac{D_n((1+z/\ln n) G_n(x)/2)}{D_n(G_n(x)/2)}\frac{(1+z/\ln n)a_1}{\sqrt{4-(1+z/\ln n)^2a_1^2}}dz.\end{align*}Since $b_1/a_1=b_1/S(I)\leq 2/S(I)=A,$ by Lemma \ref{lem20} the integrand above has the uniform bound\begin{align*}&\sup_{z\in [0,(b_1/a_1-1)\ln n]}e^z\frac{D_n((1+z/\ln n) G_n(x)/2)}{D_n(G_n(x)/2)}\frac{(1+z/\ln n)a_1}{\sqrt{4-(1+z/\ln n)^2a_1^2}}\\=&\sup_{w\in[1,b_1/a_1]}e^{(w-1)\ln n}\frac{D_n(w G_n(x)/2)}{D_n(G_n(x)/2)}\frac{wa_1}{\sqrt{4-w^2a_1^2}}\\ \leq&\sup_{w\in[1,b_1/a_1]}e^{(w-1)\ln n}e^{1-(w-1)\ln n}\frac{b_1}{\sqrt{4-b_1^2}}=\frac{eb_1}{\sqrt{4-b_1^2}}\end{align*} for $n$ large enough. By \eqref{21}  (with $z=0$) we have \begin{align*}&\lim_{n\to+\infty}\frac{D_n((1+z/\ln n) G_n(x)/2)}{D_n(G_n(x)/2)}\frac{(1+z/\ln n)a_1}{\sqrt{4-(1+z/\ln n)^2a_1^2}}\\=&\lim_{n\to+\infty}\frac{D_n((1+z/\ln n) G_n(x)/2)}{D_n(G_n(x)/2)}\frac{a_1}{\sqrt{4-a_1^2}}=e^{-2z}\frac{a_1}{\sqrt{4-a_1^2}}.\end{align*}Therefore, we can apply the dominated convergence theorem to get \begin{align*}&\lim_{n\to+\infty}2S(I)\int_{0}^{(b_1/a_1-1)\ln n}\frac{D_n((1+z/\ln n) G_n(x)/2)}{D_n(G_n(x)/2)}\frac{(1+z/\ln n)a_1}{\sqrt{4-(1+z/\ln n)^2a_1^2}}dz\\=&2S(I)\int_{0}^{+\infty}e^{-2z}\frac{a_1}{\sqrt{4-a_1^2}}dz=
			\frac{S(I)a_1}{\sqrt{4-a_1^2}}=\frac{S(I)\sqrt{4-a^2}}{|a|},\end{align*} and by \eqref{21}  with $z=0$ again,  we have \begin{align*}&\lim_{n\to+\infty}n(2\ln n)^{-\frac{1}{2}}D_n(G_n(x)/2)=e^{c_0-x},\end{align*}which implies\begin{align}\label{27}\lim_{n\to+\infty}n(2\ln n)^{\frac{1}{2}}\int_{I_1}D_n(\sqrt{4-y^2}/S(I) \cdot G_n(x)/2)dy=e^{c_0-x}S(I)\frac{\sqrt{4-a^2}}{|a|},\end{align}which finishes the proof by the fact that $S(I)\frac{\sqrt{4-a^2}}{|a|}=({4-a^2})/{|a|}=M(I). $
		
		{\bf Case 2:} $a+b>0.$  By symmetry, we can consider $-I=[-b,-a]$ 
		and the result follows {\bf Case 1}.
		
		{\bf Case 3:} $a+b=0.$ We can write $I=I_1\cup I_2$ such that $I_1=[a,0]$, $I_2=[0,b]$, then we have $S(I)=S(I_1)=S(I_2),\ M(I)=M(I_1)+M(I_2)$, and by the results of {\bf Case 1}, {\bf Case 2} we have\begin{align*}&n(2\ln n)^{\frac{1}{2}}\int_ID_n(\sqrt{4-y^2}/S(I) \cdot G_n(x)/2)dy\\=&\sum_{j=1}^2n(2\ln n)^{\frac{1}{2}}\int_{I_j}D_n(\sqrt{4-y^2}/S(I_j) \cdot G_n(x)/2)dy\\ \to& M(I_1)e^{c_0-x}+M(I_2)e^{c_0-x}=M(I)e^{c_0-x},\,\,\,n\to+\infty, \end{align*} this completes the proof.\end{proof}
	
	\subsection{The strategy to prove Theorem \ref{thm2}}
	The strategy to prove Theorem \ref{thm2} is similar to that of Theorem \ref{thm1}, but we will still give all the detailed definitions and computations. Now we consider the point process of eigenvalues of GUE,  $$ \xi^{(n)}=\sum_{i=1}^n\delta_{\lambda_i}.$$ 
	By definition of $M_0(I)$ in Theorem \ref{thm2} and $M(I)$ in Lemma \ref{lem21}, we have $M_0(I)=\ln(M(I)S(I)/4) .$ Take $c_2=c_0+M_0(I),\ f(x)=e^{c_2-x}=M(I)S(I)e^{c_0-x}/4,$ then we have $-f'(x)=f''(x)=e^{c_2-x}. $ By Lemma \ref{lem1}, for every positive integer $k$ and $x_1,\cdots,x_k\in \mathbb{R}$, for $\tau_{j}^*$ defined in Theorem \ref{thm2}, if we can prove the following convergence\begin{align}\label{61}&\lim_{n\to+\infty}\mathbb{E}\sum_{i_1,\cdots,i_{k}\ \text{all distinct}}\prod_{j=1}^k( {\tau}^*_{i_j}-x_j)_+
		=(M(I)S(I))^k\prod_{j=1}^k\left(e^{c_0-x_j}/4\right),
	\end{align}then Theorem \ref{thm2} will be proved.
	
	For $\lambda_1 < \cdots < \lambda_n  $, denote $J_k(a):=\{x\in \mathbb{R}|[x,x+a]\subset(\lambda_k,\lambda_{k+1})\}$ for $a>0,\ 1\leq k< n,$ then we have $J_k(a)=(\lambda_k,\lambda_{k+1}-a)$ for $\lambda_{k+1}-\lambda_k>a $ and $J_k(a)=\emptyset$ for $\lambda_{k+1}-\lambda_k\leq a, $ thus $J_k(a)$ is an interval of size $(\lambda_{k+1}-\lambda_k-a)_+,$ and $J_k(a)\subset(\lambda_k,\lambda_{k+1}) $ and $J_k(a)\cap J_l(a)=\emptyset $ for $k\neq l.$ Now let $ \Lambda(I)=\{i|\lambda_{i},\lambda_{i+1}\in I\}$,\begin{align*}&\Sigma_k(a_1,\cdots,a_k):=\bigcup_{i_1,\cdots,i_{k}\in\Lambda(I)\ \text{all distinct}}\prod_{j=1}^kJ_{i_j}(a_j)\subset (a,b)^k,
	\end{align*}then the right hand side is a disjoint union and\begin{align*}|\Sigma_k(a_1,\cdots,a_k)|&=\sum_{i_1,\cdots,i_{k}\in\Lambda(I)\ \text{all distinct}}\prod_{j=1}^k(\lambda_{i_j+1}-\lambda_{i_j}-a_j)_+\\ &=\sum_{i_1,\cdots,i_{k}\in\Lambda(I)\ \text{all distinct}}\prod_{j=1}^k( {m}^*_{i_j}-a_j)_+.
	\end{align*} Let $A=2/S(I)>1,$ thanks to Lemma \ref{lem20}, for every fixed $x_1,\cdots,x_k\in \mathbb{R}$ there exists $N_3>0$ such that $0<2s_0/n<G_n(x_j)<AG_n(x_j)<\pi$ for $n>N_3,\ 1\leq j\leq k.$ Now we always assume $n>N_3.$ By \eqref{30} and the fact that $S(I){m}_k^*=G_n( {\tau}^*_k)$, we have $ {\tau}^*_k-x=(G_n( {\tau}^*_k)-G_n(x))(n/4)(2\ln n)^{\frac{1}{2}}=(S(I) {m}^*_k-G_n(x))(n/4)(2\ln n)^{\frac{1}{2}}$, and \begin{align*}&\sum_{i_1,\cdots,i_{k}\in\Lambda(I)\ \text{all distinct}}\prod_{j=1}^k( {\tau}^*_{i_j}-x_j)_+
		\\=&(nS(I)/4)^k(2\ln n)^{\frac{k}{2}}\sum_{i_1,\cdots,i_{k}\in\Lambda(I)\ \text{all distinct}}\prod_{j=1}^k( {m}^*_{i_j}-G_n(x_j)/S(I))_+\\=&(nS(I)/4)^k(2\ln n)^{\frac{k}{2}}|\Sigma_k(G_n(x_1)/S(I),\cdots,G_n(x_k)/S(I))|.
	\end{align*}For fixed $x_1,\cdots,x_k\in \mathbb{R}$ and $y_1,\cdots,y_k\in I$, let \begin{align*} &\phi_{k,n}(y_1,\cdots,y_k)=n^k(2\ln n)^{\frac{k}{2}}\times \\&  \mathbb{P}((y_1,\cdots,y_k)\in\Sigma_k(G_n(x_1)/S(I),\cdots,G_n(x_k)/S(I))),\end{align*}  then\begin{align*}&\mathbb{E}\sum_{i_1,\cdots,i_{k}\ \text{all distinct}}\prod_{j=1}^k({\tau}^*_{i_j}-x_j)_+
		\\=&\mathbb{E}(nS(I)/4)^k(2\ln n)^{\frac{k}{2}}|\Sigma_k(G_n(x_1)/S(I),\cdots,G_n(x_k)/S(I))|\\=&(S(I)/4)^k\int_{I^k}\phi_{k,n}(y_1,\cdots,y_k)dy_1\cdots dy_k.
	\end{align*}Now we prove the following upper bound and lower bound separately
	\begin{align}\label{33}&\limsup_{n\to+\infty}\int_{I^k}\phi_{k,n}(y_1,\cdots,y_k)dy_1\cdots dy_k\leq (M(I))^k\prod_{j=1}^k\left(e^{c_0-x_j}\right),
	\end{align}  \begin{align}\label{34}&\liminf_{n\to+\infty}\int_{I^k}\phi_{k,n}(y_1,\cdots,y_k)dy_1\cdots dy_k\geq (M(I))^k\prod_{j=1}^k\left(e^{c_0-x_j}\right),
	\end{align}in fact \eqref{33} and \eqref{34} imply \eqref{61}, and thus Theorem \ref{thm2} follows.
	\subsection{The proof of Theorem \ref{thm2}}
	We first need  the following  equivalent condition  for a point in $\Sigma_k(a_1,\cdots,a_k)$, the proof is similar  to that of  Lemma \ref{lem2} and we omit it here.  \begin{lem}\label{lem22}For $(y_1,\cdots,y_k)\in(a,b)^k, $ the condition $(y_1,\cdots,y_k)\in\Sigma_k(a_1,\cdots,a_k) $ is equivalent to the following conditions: (i) $[y_l,y_l+a_l]\cap [y_j,y_j+a_j]=\emptyset $ for $1\leq l<j\leq k,$ and (ii) $ \lambda_l\not\in [y_l,y_l+a_l],$ for $1\leq j\leq k,\ 1\leq l\leq n$, and (iii) $\{ \lambda_1,\cdots ,\lambda_n \}\cap[y_p,y_q]\neq\emptyset,$ for every $p,q\in\{0,\cdots,k+1\},$ such that $y_p<y_q,$ here we denote $y_0=a,y_{k+1}=b.$\end{lem}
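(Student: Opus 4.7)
The plan is to mirror the proof of Lemma~\ref{lem2}, which is significantly simpler here because the eigenvalues live on $\mathbb{R}$ rather than on a circle, so the modular case analysis disappears entirely. I treat the two implications separately.

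For the forward direction, I assume $(y_1,\ldots,y_k) \in \Sigma_k(a_1,\ldots,a_k)$ and extract distinct $i_1,\ldots,i_k \in \Lambda(I)$ with $[y_j,y_j+a_j] \subset (\lambda_{i_j},\lambda_{i_j+1})$ for each $j$. Then (i) is immediate since distinct consecutive-eigenvalue gaps are pairwise disjoint, and (ii) holds because each open gap contains no eigenvalue. For (iii), given $p,q \in \{0,\ldots,k+1\}$ with $y_p < y_q$, in the interior case $1 \le p,q \le k$ the inequalities $y_p < y_q$ together with $i_p \ne i_q$ force $i_p + 1 \le i_q$, so $y_p < \lambda_{i_p+1} \le \lambda_{i_q} < y_q$ exhibits the required eigenvalue in $[y_p,y_q]$. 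The boundary cases $p=0$ and $q=k+1$ are handled with the same candidate, together with $\lambda_{i_q} \ge a = y_0$ and $\lambda_{i_p+1} \le b = y_{k+1}$ respectively, which are exactly the membership conditions coming from $i_q, i_p \in \Lambda(I)$.

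For the converse, suppose (i)--(iii) hold. Applying (iii) with $(p,q) = (0,j)$ produces an eigenvalue in $[a,y_j]$, and with $(p,q) = (j,k+1)$ produces one in $[y_j,b]$; combined with (ii) the latter lies in $(y_j+a_j,b]$. Hence there is a unique index $i_j$ with $\lambda_{i_j} \le y_j < \lambda_{i_j+1}$, and (ii) sharpens this to $\lambda_{i_j} < y_j$ and $\lambda_{i_j+1} > y_j+a_j$, yielding $[y_j,y_j+a_j] \subset (\lambda_{i_j},\lambda_{i_j+1})$. The two eigenvalues produced above also force $\lambda_{i_j} \ge a$ and $\lambda_{i_j+1} \le b$, so $i_j \in \Lambda(I)$. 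Distinctness of the $i_j$'s is then precisely the contrapositive of (iii): if $i_p = i_q$ with $y_p < y_q$ then $(y_p,y_q) \subset (\lambda_{i_p},\lambda_{i_p+1})$ contains no eigenvalue, and (ii) prevents $y_p$ or $y_q$ themselves from being eigenvalues, contradicting (iii).

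The only step requiring care, though still routine, is the interplay between (ii) and (iii) needed to guarantee $i_j \in \Lambda(I)$: condition (ii) alone would merely locate $y_j$ in some gap $(\lambda_{i_j},\lambda_{i_j+1})$ possibly with $\lambda_{i_j} < a$ or $\lambda_{i_j+1} > b$, and the boundary choices $y_0 = a$, $y_{k+1} = b$ in (iii) are tailored exactly to rule this out. Apart from this bookkeeping, the argument is a direct translation of the CUE proof with the $i=n$ wrap-around branches simply removed.
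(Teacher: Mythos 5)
Your proof is correct and is exactly the reconstruction the paper has in mind: the paper itself omits the argument, saying only that it is ``similar to that of Lemma~\ref{lem2}, and we omit it here,'' and your write-up is a faithful, complete version of that omitted adaptation, with the modular wrap-around branches of the CUE argument replaced by the two boundary indices $y_0=a$, $y_{k+1}=b$ in condition~(iii). The one place worth being explicit that you handled correctly but tersely is the deduction that $y_j+a_j<b$: condition~(iii) with $(p,q)=(j,k+1)$ supplies an eigenvalue in $[y_j,b]$, and if $y_j+a_j\ge b$ that interval would be contained in $[y_j,y_j+a_j]$, contradicting~(ii); this is exactly what lets you assert the eigenvalue actually lies in $(y_j+a_j,b]$ and hence that $\lambda_{i_j+1}\le b$, giving $i_j\in\Lambda(I)$. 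Everything else — forward direction via distinct gaps, converse via uniqueness of the bracketing gap and distinctness via the contrapositive of~(iii) — matches the Lemma~\ref{lem2} template precisely.
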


	\subsubsection{Upper bound}
	Now for fixed $x_1,\cdots,x_k\in \mathbb{R}$, as $n$ large enough, let \begin{align}\label{andd}A_n:=&\{(y_1,\cdots,y_k)\in(a,b)^k|[y_i,y_i+G_n(x_i)/S(I)]\\ \nonumber & \cap [y_j,y_j+G_n(x_j)/S(I)]=\emptyset,\forall\ 1\leq i<j\leq k\},\end{align} then for $ (y_1,\cdots,y_k)\in (a,b)^k\setminus A_n,$ by Lemma \ref{lem22} we have $\phi_{k,n}(y_1,\cdots,y_k)=0. $ If $ (y_1,\cdots,y_k)\in A_n,$ then all $y_k$'s are distinct, let $y_0=a,\ y_{k+1}=b,$ and\begin{align}\label{58}I_{n,k}=\cup_{j=1}^k[y_j,y_j+G_n(x_j)/S(I)],\ J_{n,k,j}=[z_j,z_{j+1}],\ 0\leq j\leq k,\end{align} here $z_j\ (0\leq j\leq k+1)$ is the increasing rearrangement of $y_j\ (0\leq j\leq k+1),$ then $I_{n,k}$ is a disjoint union and by Lemma \ref{lem22} we have\begin{align}\label{59}&\phi_{k,n}(y_1,\cdots,y_k)=n^k(2\ln n)^{\frac{k}{2}}\times\\&\nonumber \mathbb{P}(\xi^{(n)}(I_{n,k})=0,\ \xi^{(n)}(J_{n,k,j})>0,\ \forall\ 0\leq j\leq k).
	\end{align}By Lemma \ref{lem3} and \eqref{16} we have, \begin{align*}&\phi_{k,n}(y_1,\cdots,y_k)\leq n^k(2\ln n)^{\frac{k}{2}}\mathbb{P}(\xi^{(n)}(I_{n,k})=0)\\ &\leq n^k(2\ln n)^{\frac{k}{2}}\prod_{j=1}^k\mathbb{P}(\xi^{(n)}([y_j,y_j+G_n(x_j)/S(I)])=0).
	\end{align*}and this inequality is clearly true for $(y_1,\cdots,y_k)\not\in A_n .$ Therefore, we have \begin{align*}&\int_{I^k}\phi_{k,n}(y_1,\cdots,y_k)dy_1\cdots dy_k\\ \leq&\int_{I^k}n^k(2\ln n)^{\frac{k}{2}}\prod_{j=1}^k\mathbb{P}(\xi^{(n)}([y_j,y_j+G_n(x_j)/S(I)])=0)dy_1\cdots dy_k\\ =&\prod_{j=1}^k\left[n(2\ln n)^{\frac{1}{2}}\int_{I}\mathbb{P}(\xi^{(n)}([y_j,y_j+G_n(x_j)/S(I)])=0)dy_j\right].
	\end{align*}Thus, \eqref{33} follows if we can prove the following inequality
	\begin{align}\label{22}&\limsup_{n\to+\infty}n(2\ln n)^{\frac{1}{2}}\int_{I}\mathbb{P}(\xi^{(n)}([y,y+G_n(x)/S(I)])=0)dy\leq M(I)e^{c_0-x},
	\end{align}and by Lemma \ref{lem21}, we only need to prove \begin{align}\label{32}&\limsup_{n\to+\infty}n(2\ln n)^{\frac{1}{2}}\sup_{y\in I}\Big(\mathbb{P}(\xi^{(n)}([y,y+G_n(x)/S(I)])=0)\\ \nonumber&-D_n(\sqrt{4-y^2}/S(I) \cdot G_n(x)/2)\Big)\leq 0.
	\end{align}
	Let $\{h_n\}$ be
	the Hermite polynomials, which are the successive monic orthogonal
	polynomials with respect to the Gaussian weight $e^{-x^2/2}dx.$ Following \cite{AGZ}, we introduce the functions $$\psi_k(x)=\frac{e^{-x^2/4}}{\sqrt{\sqrt{2\pi}k!}}h_k(x).$$ Then the set of points $\{\lambda_1,\cdots,\lambda_n\}$ with respect to the joint density \eqref{6} is a determinantal point
	process with the kernel given by \cite{AGZ}\begin{align}\label{29}&K^{GUE(n)}(x,y)=\sqrt{n}\frac{\psi_{n}(x\sqrt{n})\psi_{n-1}(y\sqrt{n})-
			\psi_{n-1}(x\sqrt{n})\psi_{n}(y\sqrt{n})}{x-y}.\end{align}The probability that  $\xi^{(n)}$ has no point in a measurable subset $J$ is $$\mathbb{P}(\xi^{(n)}(J)=0):=\mathbb{P}^{GUE(n)}(\lambda_i\not\in J,1\leq i\leq n)=\det (\text{Id} -\chi_J P_{GUE(n)}\chi_J), $$ where  $P_{GUE(n)}$ is the orthogonal projection from $L^2(\mathbb{R})$ to   $W_n:=\text{span}\{x^{k}e^{-nx^2/4}\\|0\leq k<n,k\in\mathbb{Z}\}$ with kernel $K^{GUE(n)}(x,y)$. 
	
	We will need the following inequality regarding the difference of the gap probabilities between CUE and GUE, 
	\begin{lem}\label{lem23}Let $ \varepsilon_0\in(0,1),\ C_0>c_*>0,\ \rho_{sc}(x)=\sqrt{(4-x^2)_+}/(2\pi).$ Then uniformly for $x\in(-2+\varepsilon_0,2-\varepsilon_0),\ c_*(\ln n)^{\frac{1}{2}}/n<\delta_n<\min(C_0(\ln n)^{\frac{1}{2}}/n,1/2),$\begin{align*}&\mathbb{P}^{GUE(n)}(\lambda_i\not\in [x,x+\delta_n/\rho_{sc}(x)],1\leq i\leq n)\\&-\mathbb{P}^{CUE(n)}(\theta_i\not\in [0,2\pi\delta_n],1\leq i\leq n)\\&\leq O((n\ln n)^{-1}).
	\end{align*}\end{lem}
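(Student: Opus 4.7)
The plan is to realize both hole probabilities as Fredholm determinants on the same Hilbert space and then compare them via Lemma~\ref{lem15}. Let $J:=[x, x+\delta_n/\rho_{sc}(x)]$ and $I:=[0, 2\pi\delta_n]$. The natural affine isometry $U:L^2(I)\to L^2(J)$, $(Uf)(y):=\sqrt{2\pi\rho_{sc}(x)}\,f(2\pi\rho_{sc}(x)(y-x))$, transports the CUE projection $\chi_I P_n\chi_I$ to an operator $\tilde B$ on $L^2(J)$ with integral kernel
\[
\tilde K(y,z)=\rho_{sc}(x)\,\frac{\sin(n\pi\rho_{sc}(x)(y-z))}{\sin(\pi\rho_{sc}(x)(y-z))},
\]
so that $\det(\text{Id}-\tilde B)=D_n(\pi\delta_n)=\mathbb P^{CUE(n)}(\theta_i\notin I)$. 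On the GUE side, $A:=\chi_J K^{GUE(n)}\chi_J$ satisfies $\det(\text{Id}-A)=\mathbb P^{GUE(n)}(\lambda_i\notin J)$, and the claim becomes a bound on $\det(\text{Id}-A)-\det(\text{Id}-\tilde B)$.

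Lemma~\ref{lem15} reduces the task to bounding three quantities: the trace $|\Tr((A-\tilde B)(\text{Id}-\tilde B)^{-1})|$, the Hilbert-Schmidt norm $|A-\tilde B|_2$, and the resolvent norm $\|(\text{Id}-\tilde B)^{-1}\|$. The resolvent norm is controlled by Lemma~\ref{lem18} applied to $\tilde B$ (the same Fredholm machinery used in the proof of Theorem~\ref{thm1}), combined with $\Tr\tilde B=n\delta_n\ge c_*(\ln n)^{1/2}$ and the sharp asymptotic \eqref{1} for $D_n(\pi\delta_n)$. For the kernel comparison I would invoke the uniform bulk Plancherel-Rotach asymptotic of Deift et al. for the Hermite wave functions: in the bulk one has $K^{GUE(n)}(y,z)=\sin(n\Phi(y)-n\Phi(z))/(\pi(y-z))+O(1)$ where $\Phi'(\cdot)=\pi\rho_{sc}(\cdot)$. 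Taylor expansion of $\Phi$ and of $\rho_{sc}$ around $x$ shows that on the window $J$ of radius $O((\ln n)^{1/2}/n)$ the phases agree with those of $\tilde K$ up to $O(n|y-z|^2)=O((\ln n)/n)$, and the amplitudes agree up to $O((\ln n)^{1/2}/n)$; integrating over $J\times J$ then gives a small $|A-\tilde B|_2$, while the diagonal formula $K^{GUE(n)}(y,y)=n\rho_{sc}(y)+O(1)$ combined with Taylor expansion of $\rho_{sc}$ delivers a small $|\Tr(A-\tilde B)|$.

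The main obstacle is extracting sharp enough error terms from the Plancherel-Rotach asymptotic uniformly on the whole window $J$: in microscopic coordinates this is a slowly growing interval of length $O((\ln n)^{1/2})$, so standard pointwise local universality does not directly suffice, and the rapid oscillations of the sine-kernel phase must be tracked carefully so that cancellations survive after squaring and integrating. Once the three estimates are combined, plugging them into Lemma~\ref{lem15} together with $\det(\text{Id}-\tilde B)\le 1$ and the elementary fact $(\ln n)^{O(1)}e^{-(\ln n)^{1/2}}\to 0$ yields $|\det(\text{Id}-A)-\det(\text{Id}-\tilde B)|=O((n\ln n)^{-1})$, which is stronger than (and implies) the stated one-sided inequality.
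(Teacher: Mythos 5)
Your overall framework---rescale both kernels to the same interval, and then compare the Fredholm determinants via Lemma~\ref{lem15} and Lemma~\ref{lem18}---matches the paper's strategy in spirit, and the trace/Hilbert-Schmidt/resolvent estimates you intend to derive from Plancherel-Rotach asymptotics are exactly the ones the paper imports (from the proof of Lemma 3.5 in \cite{BB}). But there is a genuine gap: the resolvent bound does not hold uniformly over the whole stated range of $\delta_n$, and you cannot simply ``plug the three estimates into Lemma~\ref{lem15}.'' Lemma~\ref{lem18} gives $\|(\text{Id}-\tilde B)^{-1}\|\leq e^{1-\Tr\tilde B}(\det(\text{Id}-\tilde B))^{-1}$, and by \eqref{1} one has $\det(\text{Id}-\tilde B)=D_n(\pi\delta_n)\approx e^{-n^2\pi^2\delta_n^2/8}$ while $e^{-\Tr\tilde B}\approx e^{-n\delta_n}$. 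Writing $\delta_n=c(\ln n)^{1/2}/n$, the resulting bound is $\|(\text{Id}-\tilde B)^{-1}\|=O\bigl(n^{c^2\pi^2/8}e^{-c(\ln n)^{1/2}}\bigr)$, and since $|A-\tilde B|_2=O((\ln n)^{3/2}/n)$, the product $|A-\tilde B|_2^2\|(\text{Id}-\tilde B)^{-1}\|^2$ is bounded only when $c^2\pi^2/8<1$, i.e.\ $c<2\sqrt 2/\pi$. Your argument therefore breaks for $C_0>2\sqrt 2/\pi$, which the hypotheses permit.

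The paper avoids this by introducing a data-driven cutoff: choose $\alpha_n$ so that $D_n(\alpha_n)=(n\ln n)^{-1}$ and split into two cases. When $\pi\delta_n\le\alpha_n$, one has $\det(\text{Id}-\tilde B)\ge(n\ln n)^{-1}$, so the resolvent norm is $O(n\ln n\cdot e^{\Tr\tilde B})=O(n\ln n\,e^{-c_*(\ln n)^{1/2}})$, and the Fredholm comparison closes. When $\pi\delta_n>\alpha_n$, one abandons the Fredholm comparison entirely and uses monotonicity of the GUE hole probability together with the already-proved estimate at $\delta_n'=\alpha_n/\pi$: the GUE probability is then bounded by $D_n(\alpha_n)+O((n\ln n)^{-1})=O((n\ln n)^{-1})$, which gives the one-sided inequality for free since probabilities are nonnegative. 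Relatedly, you claim a two-sided bound $|\det(\text{Id}-A)-\det(\text{Id}-\tilde B)|=O((n\ln n)^{-1})$, but the paper only establishes (and the lemma only asserts) the one-sided inequality, and the large-$\delta_n$ regime is handled precisely because the one-sided bound is trivial there; without the case split you have no access to a bound of either sign in that regime.
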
\begin{proof}Let $A,\ B$ be integral operators with
		respective kernels\begin{align*}&A(u,v)=-\frac{1}{n\rho_{sc}(x)}K_{(0,n\delta_n)}^{GUE(n)}\left(x+\frac{u}{n\rho_{sc}(x)},
			x+\frac{v}{n\rho_{sc}(x)}\right)
		\end{align*}and\begin{align*}&B(u,v)=-\frac{2\pi}{n}K_{(0,n\delta_n)}^{CUE(n)}\left(\frac{2\pi}{n}u,\frac{2\pi}{n}v\right).
		\end{align*}From the proof of Lemma 3.5 in \cite{BB}, we know that\begin{align}\label{35}&|A-B|_2=O((\ln n)^{3/2}/n),\ |A|_2^2=O((\ln n)^{2/3}),\ |B|_2^2=O((\ln n)^{2/3}),\\& \label{36}\Tr A=-n\delta_n+O((\ln n)^{3/2}/n),\ \Tr B=-n\delta_n+O((\ln n)^{3/2}/n).
		\end{align} We also have\begin{align}\label{37}&\det(\text{Id}+A)=\mathbb{P}^{GUE(n)}(\lambda_i\not\in [x,x+\delta_n/\rho_{sc}(x)],1\leq i\leq n)
		\end{align}and\begin{align}\label{38}&\det(\text{Id}+B)=\mathbb{P}^{CUE(n)}(\theta_i\not\in [0,2\pi\delta_n],1\leq i\leq n)=D_n(\pi\delta_n).
		\end{align}Since $D_n(\alpha)$ is a continuous function for $ \alpha\in[0,\pi]$, $D_n(0)=1$ and $D_n(\pi)=0,$ for $n\geq 2$ there exists $\alpha_n\in(0,\pi)$ such that $D_n(\alpha_n)=(n\ln n)^{-1}.$ Now we discuss the case $\pi\delta_n\leq \alpha_n$ and the case $\pi\delta_n\geq \alpha_n$ separately.
		
		If $\pi\delta_n\leq \alpha_n$,  recall the general comparison inequalities in Lemma \ref{lem15}, we have \begin{align}\label{39}\exp(\Tr(B-A)(\text{Id}+B)^{-1})\det(\text{Id} +A)/\det(\text{Id}+B)\leq1, \end{align} and \begin{align}\label{40} |\Tr((B-A)(\text{Id}+B)^{-1})|\leq|\Tr(A-B)|+|A-B|_2|B|_2\|(\text{Id}+B)^{-1}\|. \end{align} By Lemma \ref{lem18} we have\begin{align}\label{53}&\|(\text{Id}+B)^{-1}\|=(1-\lambda_1(-B))^{-1}\leq e^{1+\Tr B}(\det(\text{Id}+B))^{-1}.
		\end{align}Since $D_n(\alpha)$ is decreasing and $\pi\delta_n\leq \alpha_n$, by \eqref{38} we have\begin{align}\label{54}&\det(\text{Id}+B)=D_n(\pi\delta_n)\geq D_n(\alpha_n)=(n\ln n)^{-1}.
		\end{align}By \eqref{35}\eqref{36}\eqref{40}\eqref{53} and the fact that $c_*(\ln n)^{\frac{1}{2}}/n<\delta_n $, we have\begin{align}\label{55}&|\Tr((B-A)(\text{Id}+B)^{-1})|\\ \nonumber\leq& O((\ln n)^{3/2}/n)+O((\ln n)^{3/2+1/3}/n)e^{1+\Tr B}(\det(\text{Id}+B))^{-1}\end{align} and we also have \begin{align}\label{56}&\det(\text{Id}+B)\leq e^{\Tr B}=e^{-n\delta_n+O((\ln n)^{3/2}/n)}=e^{O(1)-c_{*}(\ln n)^{1/2}}=O((\ln n)^{-3}).\end{align}By \eqref{54}\eqref{55}\eqref{56}, we have\begin{align}\label{ttd}&|\Tr((B-A)(\text{Id}+B)^{-1})|\\ \nonumber \leq& O((\ln n)^{3/2}/n)+O((\ln n)^{3/2+1/3-3}/n)(\det(\text{Id}+B))^{-1}\\ \nonumber \leq& O((\ln n)^{2}/n)+O((\ln n)^{-7/6}/n)(n \ln n)=O(1),
		\end{align}and thus we have \begin{align*}|\exp(-\Tr(B-A)(\text{Id}+B)^{-1})-1|=O(|\Tr((B-A)(\text{Id}+B)^{-1})|), \end{align*}and we further have (using \eqref{39}\eqref{56}\eqref{ttd})\begin{align*}&\det(\text{Id} +A)-\det(\text{Id}+B)\\ \leq&\exp(-\Tr(B-A)(\text{Id}+B)^{-1})\det(\text{Id}+B)-\det(\text{Id}+B)\\ \leq&O(|\Tr(B-A)(\text{Id}+B)^{-1}|)\det(\text{Id}+B)\\ \leq& O((\ln n)^{3/2}/n)\det(\text{Id}+B)+O((\ln n)^{3/2+1/3-3}/n)\\ \leq& O((\ln n)^{2}/n)O((\ln n)^{-3})+O((\ln n)^{-1}/n)=O((n\ln n)^{-1}).\end{align*}Now the result follows from the identities \eqref{37} and \eqref{38}.
		
		If $\pi\delta_n\geq \alpha_n$, then we have (taking $\delta_n'=\alpha_n/\pi\leq \delta_n$)\begin{align*}&\mathbb{P}^{GUE(n)}(\lambda_i\not\in [x,x+\delta_n/\rho_{sc}(x)],1\leq i\leq n)\\ \leq& \mathbb{P}^{GUE(n)}(\lambda_i\not\in [x,x+\delta_n'/\rho_{sc}(x)],1\leq i\leq n)\\ \leq&\mathbb{P}^{CUE(n)}(\theta_i\not\in [0,2\pi\delta_n'],1\leq i\leq n)+O((n\ln n)^{-1})= O((n\ln n)^{-1}),
		\end{align*}and the result is also true, here we used the fact that \begin{align*}&\mathbb{P}^{CUE(n)}(\theta_i\not\in [0,2\pi\delta_n'],1\leq i\leq n)=D_n(\pi\delta_n')=D_n(\alpha_n)=(n\ln n)^{-1}.
		\end{align*}This completes the proof.\end{proof}Now we prove \eqref{32}. For $y\in I,\ x\in \mathbb{R},$ take $ \delta_n=[\sqrt{4-y^2}/S(I)] \cdot [G_n(x)/(2\pi)]$, then we have $ \delta_n/\rho_{sc}(y)=2\pi\delta_n/\sqrt{4-y^2}=G_n (x)/S(I).$ By \eqref{31}, there exists a constant $N_4>0$ depending only on $x$ such that $4(\ln n)^{\frac{1}{2}}/n<G_n (x)<8(\ln n)^{\frac{1}{2}}/n <\pi S(I)/2 $ for $n>N_4.$ Then we have 
	$(2/\pi)(\ln n)^{\frac{1}{2}}/n<G_n(x)/(2\pi)\leq [\sqrt{4-y^2}/S(I)] \cdot [G_n(x)/(2\pi)]=\delta_n\leq [2/S(I)] \cdot [G_n(x)/(2\pi)]< (\pi S(I))^{-1} \cdot 8(\ln n)^{\frac{1}{2}}/n<1/2 $ for $y\in I,\ n>N_4,$
	thus by Lemma \ref{lem23} we deduce that \begin{align*}&\mathbb{P}(\xi^{(n)}([y,y+G_n(x)/S(I)])=0)-D_n(\sqrt{4-y^2}/S(I) \cdot G_n(x)/2)\\=&\mathbb{P}(\xi^{(n)}([y,y+\delta_n/\rho_{sc}(y)])=0)-D_n(\pi\delta_n)\\=&\mathbb{P}^{GUE(n)}(\lambda_i\not\in [y,y+\delta_n/\rho_{sc}(y)],1\leq i\leq n)\\&-\mathbb{P}^{CUE(n)}(\theta_i\not\in [0,2\pi\delta_n],1\leq i\leq n)\leq O((n\ln n)^{-1}),
	\end{align*}and the estimate is uniform for  $y\in I,\ n>N_4.$ Thus we have \begin{align*}&n(2\ln n)^{\frac{1}{2}}\sup_{y\in I}\Big(\mathbb{P}(\xi^{(n)}([y,y+G_n(x)/S(I)])=0)-D_n(\sqrt{4-y^2}/S(I) \cdot G_n(x)/2)\Big)\\ &\leq n(2\ln n)^{\frac{1}{2}} O((n\ln n)^{-1})=O((\ln n)^{-1/2})\to 0,\,\,\,n\to+\infty,
	\end{align*}  and thus \eqref{32} is true, so is  \eqref{22} and hence the upper bound \eqref{33}.
	\subsubsection{Lower bound}
	For the lower bound \eqref{34}, we discuss the 3 cases separately.
	
	{\bf Case 1:} $a+b<0.$ Let $b_0=(a+b)/2<0,\ I_1=(a,b_0)\subset I,\ a_{*}=\sqrt{4-a^2}=S(I),$ $b_*=\sqrt{4-b_0^2}>a_*.$  We change variables $y_j=-\sqrt{4-v_j^2},\ 0<v_j=(1+u_j/\ln n)a_*$ to obtain\begin{align*}&\int_{I^k}\phi_{k,n}(y_1,\cdots,y_k)dy_1\cdots dy_k\geq \int_{I_1^k}\phi_{k,n}(y_1,\cdots,y_k)dy_1\cdots dy_k\\ =& \int_{(a_*,b_*)^k}\phi_{k,n}\left(-\sqrt{4-v_1^2},\cdots,-\sqrt{4-v_k^2}\right)\prod_{j=1}^k\frac{v_j}{\sqrt{4-v_1^2}}dv_1\cdots dv_k\\ =& a_*^k(\ln n)^{-k}\int_{(0,(b_*/a_*-1)\ln n)^k}\phi_{k,n}\bigg(-\sqrt{4-(1+u_1/\ln n)^2a_*^2},\cdots,\\&-\sqrt{4-(1+u_k/\ln n)^2a_*^2}\bigg)\prod_{j=1}^k\frac{(1+u_j/\ln n)a_*}{\sqrt{4-(1+u_j/\ln n)^2a_*^2}}du_1\cdots du_k.\end{align*}Denote $l_n=(b_*/a_*-1)\ln n$ and \begin{align}\label{57}&\gamma_n(u)=-\sqrt{4-(1+u/\ln n)^2S(I)^2},\ \ \beta_n(u)=\frac{(1+u/\ln n)S(I)}{\sqrt{4-(1+u/\ln n)^2S(I)^2}},\end{align} then $ \gamma_n$ maps $(0,l_n) $ to $I_1\subset (a,b)$ and \begin{align}\label{62}&\int_{I^k}\phi_{k,n}(y_1,\cdots,y_k)dy_1\cdots dy_k \\ \nonumber\geq& S(I)^k(\ln n)^{-k}\int_{(0,l_n)^k}\phi_{k,n}\big(\gamma_n(u_1),\cdots,\gamma_n(u_k)\big)\prod_{j=1}^k\beta_n(u_j)du_1\cdots du_k.\end{align}
	
	{\bf Case 2:} $a+b>0.$ Let $b_0=(a+b)/2>0,\ I_1=(b_0,b)\subset I,\ a_*=\sqrt{4-b^2}=S(I),$ $b_*=\sqrt{4-b_0^2}>a_*,$ $l_n=(b_*/a_*-1)\ln n$ and \begin{align}\label{63}&\gamma_n(u)=\sqrt{4-(1-u/\ln n)^2S(I)^2},\ \ \beta_n(u)=\frac{(1-u/\ln n)S(I)}{\sqrt{4-(1-u/\ln n)^2S(I)^2}}.\end{align}Similar to {\bf Case 1} we have $ \gamma_n:(-l_n,0)\to I_1\subset (a,b)$ and\begin{align}\label{64}&\int_{I^k}\phi_{k,n}(y_1,\cdots,y_k)dy_1\cdots dy_k\geq \int_{I_1^k}\phi_{k,n}(y_1,\cdots,y_k)dy_1\cdots dy_k \\ \nonumber=& S(I)^k(\ln n)^{-k}\int_{(-l_n,0)^k}\phi_{k,n}\big(\gamma_n(u_1),\cdots,\gamma_n(u_k)\big)\prod_{j=1}^k\beta_n(u_j)du_1\cdots du_k.\end{align}
	
	{\bf Case 3:} $a+b=0.$ Let $a_0=a/2<0,\ b_0=b/2=-a_0>0,\ I_1=(a,a_0)\cup(b_0,b)\subset I,\ a_*=\sqrt{4-a^2}=\sqrt{4-b^2}=S(I),$ $b_*=\sqrt{4-a_0^2}=\sqrt{4-b_0^2}>a_*,$ $l_n=(b_*/a_*-1)\ln n$ and functions $\gamma_n(u),\beta_n(u) $ be defined as \eqref{57} for $u>0$ and as \eqref{63} for $u<0$. Similar to {\bf Case 1} we have $ \gamma_n:(-l_n,l_n)\setminus\{0\}\to I_1\subset (a,b)$ and\begin{align}\label{65}&\int_{I^k}\phi_{k,n}(y_1,\cdots,y_k)dy_1\cdots dy_k\geq \int_{I_1^k}\phi_{k,n}(y_1,\cdots,y_k)dy_1\cdots dy_k \\ \nonumber=& S(I)^k(\ln n)^{-k}\int_{(-l_n,l_n)^k}\phi_{k,n}\big(\gamma_n(u_1),\cdots,\gamma_n(u_k)\big)\prod_{j=1}^k\beta_n(u_j)du_1\cdots du_k.\end{align}
	Now the lower bound \eqref{34} is the consequence of the following  \begin{lem}\label{lem24}For fixed $I=[a,b]\subset(-2,2),\ k\in\mathbb{Z},\ k>0,$ $x_1,\cdots,x_k\in\mathbb{R},$ let $\gamma_n(u)$ be defined as \eqref{57} for $u>0$ and as \eqref{63} for $u<0$. Assume that (i) $a+b<0,$ $u_1,\cdots,u_k\in(0,+\infty)$ all distinct, or (ii) $a+b>0,$ $u_1,\cdots,u_k\in(-\infty,0)$ all distinct, or (iii) $a+b=0,$ $u_1,\cdots,u_k\in\mathbb{R}\setminus\{0\}$ and $|u_j|$'s are all distinct, then we have\begin{align*}&\liminf_{n\to+\infty}(\ln n)^{-k}\phi_{k,n}\big(\gamma_n(u_1),\cdots,\gamma_n(u_k)\big)\geq 2^ke^{\sum_{j=1}^k(c_0-x_j-2|u_j|)}.
	\end{align*}\end{lem}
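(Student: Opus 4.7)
The plan is to adapt the CUE lower bound argument (Section 2.4.3) to GUE, transferring each individual GUE hole probability to a CUE one via Lemma \ref{lem23} and then invoking the rescaling limit Lemma \ref{another}. Write $y_j = \gamma_n(u_j)$; since the $|u_j|$'s are distinct, $y_j - y_l = \Theta(1/\ln n)$ for $j \neq l$ (by Taylor expansion of $\gamma_n$), so for $n$ large enough the intervals $[y_j, y_j + G_n(x_j)/S(I)]$ (of length $O(\sqrt{\ln n}/n) = o(1/\ln n)$) are pairwise disjoint. Hence by Lemma \ref{lem22} and \eqref{59},
\[\phi_{k,n}(y_1,\ldots,y_k) = n^k(2\ln n)^{k/2}\,\mathbb{P}\bigl(\xi^{(n)}(I_{n,k}) = 0,\ \xi^{(n)}(J_{n,k,j}) > 0,\ 0\leq j\leq k\bigr),\]
and I bound this below by $\mathbb{P}(\xi^{(n)}(I_{n,k}) = 0) - \sum_{j=0}^{k}\mathbb{P}(\xi^{(n)}(I_{n,k}\cup J_{n,k,j}) = 0)$.

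For the main term I establish a GUE analogue of Lemma \ref{splt}, namely
\[\lim_{n\to+\infty}\mathbb{P}(\xi^{(n)}(I_{n,k}) = 0)\Big/\prod_{j=1}^{k}\mathbb{P}\bigl(\xi^{(n)}([y_j,y_j+G_n(x_j)/S(I)]) = 0\bigr) = 1,\]
by applying Lemma \ref{lem15} with $A=\chi_{I_{n,k}}P_{GUE(n)}\chi_{I_{n,k}}$ and $B = \sum_j \chi_{[y_j,\,\cdot]}P_{GUE(n)}\chi_{[y_j,\,\cdot]}$: verify $\Tr((A-B)(\text{Id}-B)^{-1}) = 0$ via the same $\chi_i\chi_j = 0$ commutation argument used in the CUE proof; bound $|A-B|_2^2 = O((\ln n)^3/n^2)$ by using the off-diagonal estimate $|K^{GUE(n)}(x,y)| = O(1/|x-y|)$ on pairs separated by $\Omega(1/\ln n)$ in the bulk (together with $G_n(x_j)/S(I) = O(\sqrt{\ln n}/n)$); and control $\|(\text{Id}-B)^{-1}\|$ via the announced lowest-eigenvalue estimate in Lemma \ref{lem25}. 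Each factor $\mathbb{P}(\xi^{(n)}([y_j,y_j+G_n(x_j)/S(I)])=0)$ is then replaced, using Lemma \ref{lem23} with $\delta_n = [\sqrt{4-y_j^2}/S(I)]\,G_n(x_j)/(2\pi)$ so that $\delta_n/\rho_{sc}(y_j) = G_n(x_j)/S(I)$, by $D_n(\sqrt{4-y_j^2}/S(I)\cdot G_n(x_j)/2) + O(1/(n\ln n))$. The definitions \eqref{57}, \eqref{63} of $\gamma_n$ force $\sqrt{4-y_j^2}/S(I) = 1 + |u_j|/\ln n$ in each of the three cases, so Lemma \ref{another} with $z = |u_j|$ gives $n(2\ln n)^{-1/2}D_n((1+|u_j|/\ln n)G_n(x_j)/2) \to e^{c_0-x_j-2|u_j|}$. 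Taking the product of $k$ such factors and multiplying by $n^k(2\ln n)^{k/2}(\ln n)^{-k}$ yields the claimed limit $2^k e^{\sum_j (c_0 - x_j - 2|u_j|)}$.

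The error contributions are controlled by the GUE negative correlation Lemma \ref{lem3}: $\mathbb{P}(\xi^{(n)}(I_{n,k}\cup J_{n,k,j})=0) \leq \mathbb{P}(\xi^{(n)}(I_{n,k})=0)\cdot\mathbb{P}(\xi^{(n)}(J_{n,k,j})=0)$. Each $J_{n,k,j}$ is either a macroscopic subinterval of $(-2,2)$ (for the leftmost or rightmost gap produced by the clustering of the $y_j$'s near a single endpoint), in which case the hole probability is exponentially small in $n^2$ by large-deviation bounds for the spectrum; or a bulk interval of length $\Omega(1/\ln n)$, for which Lemma \ref{lem23} together with the asymptotic \eqref{1} gives $\mathbb{P}(\xi^{(n)}(J_{n,k,j})=0) = \exp(-\Omega(n^2/(\ln n)^2))$. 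In either case the product with the polynomial factor $n^k(2\ln n)^{k/2}(\ln n)^{-k}$ tends to $0$, so the lower bound for $\phi_{k,n}$ is not disturbed.

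The principal obstacle is the GUE splitting. In the CUE setting (Lemma \ref{splt}), the flat diagonal $K_n(x,x) = n/(2\pi)$ directly feeds Lemma \ref{lem18} to give $\|(\text{Id}-B)^{-1}\| = O(n(\ln n)^{1/2}e^{-(\ln n)^{1/2}/2})$; the position-dependent GUE density $\rho_{sc}(y_j)$ makes this estimate more delicate, and the argument has to route through the comparison of GUE and CUE Fredholm determinants encoded in Lemma \ref{lem25}. Once such a sub-polynomial bound on $\|(\text{Id}-B)^{-1}\|^2$ is in hand, it pairs with the slightly weaker $|A-B|_2^2 = O((\ln n)^3/n^2)$ to give $|A-B|_2^2\,\|(\text{Id}-B)^{-1}\|^2 \to 0$, completing the application of Lemma \ref{lem15}.
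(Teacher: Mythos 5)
Your overall plan — reduce $\phi_{k,n}$ via \eqref{59} to a hole probability for a union of $k$ short intervals plus exponentially small error terms, and then compare to CUE asymptotics via Lemma~\ref{another} — matches the paper's. But your route through a ``GUE analogue of Lemma~\ref{splt}'' followed by a per-interval GUE$\to$CUE replacement via Lemma~\ref{lem23} has a directional flaw. Lemma~\ref{lem23} is one-sided: it gives $\mathbb{P}^{GUE}-\mathbb{P}^{CUE}\leq O((n\ln n)^{-1})$, i.e.\ only an upper bound on the GUE hole probability. That is exactly what is wanted for the upper bound \eqref{33}, but Lemma~\ref{lem24} is a \emph{lower} bound on $\phi_{k,n}$, and after your GUE--GUE splitting you would need each factor $\mathbb{P}^{GUE}([y_j,y_j+G_n(x_j)/S(I)])=0)$ bounded from \emph{below} by $D_n(\cdots)$ up to $o(1)$ relative error. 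Lemma~\ref{lem23} does not supply that. The tool that does is precisely Lemma~\ref{lem25}, whose statement is not a lowest-eigenvalue estimate (as you describe it) but the hole-probability lower bound $\mathbb{P}(\xi^{(n)}(\cup_j[y_j,y_j+a_j])=0)\geq(1-(\ln n)^{-1})\prod_j D_n(a_j\sqrt{4-y_j^2}/2)$. The paper applies Lemma~\ref{lem25} \emph{directly}: it is already the one-step GUE-versus-CUE Fredholm determinant comparison (with $B$ built from rescaled CUE kernels, so $\det(\mathrm{Id}-B)=\prod_j D_n(\cdots)$ is under control), obviating both your intermediate GUE--GUE splitting and the per-factor use of Lemma~\ref{lem23}. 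As written, your route to $\|(\mathrm{Id}-B^{GUE})^{-1}\|$ is also unresolved: Lemma~\ref{lem18} would require a lower bound on $\det(\mathrm{Id}-B_j^{GUE})=\mathbb{P}^{GUE}(\text{single interval hole})$, which again is Lemma~\ref{lem25}'s $k=1$ case, so you are effectively circling back to that lemma without invoking it cleanly.

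Your treatment of the error terms also has a gap: you propose to apply Lemma~\ref{lem23} to the full $J_{n,k,j}$, but the bulk ones have length $\Theta(1/\ln n)$, which corresponds to $\delta_n=\Theta(1/\ln n)$ and violates the hypothesis $\delta_n<C_0(\ln n)^{1/2}/n$ of Lemma~\ref{lem23}. The paper circumvents this by never estimating the hole probability of $J_{n,k,j}$ itself; instead it picks a short subinterval $[z_j',z_j'+d_0]\subset J_{n,k,j}$ with $d_0=G_n(-C_0)/S(I)\sim\sqrt{\ln n}/n$ and, together with similarly short subintervals of $I_{n,k}$, applies the GUE negative-correlation inequality \eqref{16} to get $\mathbb{P}(\xi^{(n)}(I_{n,k}\cup J_{n,k,j})=0)\leq p_{n,k}^{k+1}$ with $p_{n,k}=\sup_{z\in I}\mathbb{P}(\xi^{(n)}([z,z+d_0])=0)=O(n^{-1}(\ln n)^{1/2})$; the prefactor $(2n)^k(2\ln n)^{-k/2}$ then leaves $O(n^{-1}(\ln n)^{1/2})\to0$. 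You would need to replace your ``exponentially small'' claims with this kind of polynomial bound, since after the GUE transfer the best you get from the available lemmas at that length scale is $O((n\ln n)^{-1})$, not $\exp(-\Omega(n^2/(\ln n)^2))$.
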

	
	Lemma \ref{lem24} will imply the lower bound \eqref{34} as follows.
	
	For the case $a+b<0,$ denote $I_0=(0,+\infty),$ then we have $\int_{I_0}2e^{-2|u|}du=1,\ S(I)=\sqrt{4-a^2}$ and $S(I)^2/{\sqrt{4-S(I)^2}} =(4-a^2)/|a| =M(I)
	.$
	
	Since $l_n\to+\infty,\ \beta_n(u_j)\to{S(I)}/{\sqrt{4-S(I)^2}}$ as $n\to+\infty,$ by \eqref{62}, Lemma \ref{lem24} and Fatou's Lemma, we have\begin{align*}&\liminf_{n\to+\infty}\int_{I^k}\phi_{k,n}(y_1,\cdots,y_k)dy_1\cdots dy_k\\ \geq& S(I)^k\int_{I_0^k}\liminf_{n\to+\infty}\left[(\ln n)^{-k}\phi_{k,n}\big(\gamma_n(u_1),\cdots,\gamma_n(u_k)\big)\prod_{j=1}^k\beta_n(u_j)\right]du_1\cdots du_k\\ \geq& S(I)^k\int_{I_0^k}2^ke^{\sum_{j=1}^k(c_0-x_j-2|u_j|)}\left({S(I)}/{\sqrt{4-S(I)^2}}\right)^kdu_1\cdots du_k\\ =& \left({S(I)}^2/{\sqrt{4-S(I)^2}}\right)^k\prod_{j=1}^k\left(e^{c_0-x_j}\right)\int_{I_0^k}\prod_{j=1}^k\left(2e^{-2|u_j|}\right)
		du_1\cdots du_k 
		\\ =& \left(S(I)^2/{\sqrt{4-S(I)^2}} \right)^k\prod_{j=1}^k\left(e^{c_0-x_j}\right)
		=(M(I))^k\prod_{j=1}^k\left(e^{c_0-x_j}\right).\end{align*} For the cases when $a+b>0$ and $a+b=0$, the proof follows similarly. This completes the proof of the lower bound \eqref{34}, and hence Theorem \ref{thm2}.

	All of the rest effort is to prove Lemma \ref{lem24}. We first need a   lower bound of $\mathbb{P}(\xi^{(n)}(J)=0) $ when $J$ is a finite union of intervals.\begin{lem}\label{lem25}Let $ \varepsilon_0\in(0,1),\ C_0>0,\ k\in\mathbb{Z^+},\ I=[a,b]=[y_0,y_{k+1}]\subset(-2,2).$ Assume $y_1,\cdots,y_k\in I,\ a_1,\cdots,a_k\in (G_n(-C_0)/S(I),G_n(C_0)/S(I))\cap(0,\varepsilon_0(2\ln n)^{-1}),$ $|y_i-y_j|\geq\varepsilon_0(\ln n)^{-1}$ for every $0\leq i<j\leq k+1,$ $\sqrt{4-y_i^2}/S(I)\leq 1+C_0(\ln n)^{-1}$ for every $1\leq i\leq k.$ Then there exists a constant $N_5>0$ depending only on $\varepsilon_0,C_0,k,I$ such that for $n>N_5$ we have\begin{align*}&\mathbb{P}(\xi^{(n)}(\cup_{j=1}^k[y_j,y_j+a_j])=0)\geq(1-(\ln n)^{-1})\prod_{j=1}^kD_n\left(a_j\sqrt{4-y_j^2}/2\right).
	\end{align*}\end{lem}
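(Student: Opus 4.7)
The plan is to mirror the CUE argument used in Lemma~\ref{splt}. Write the GUE hole probability as a Fredholm determinant $\det(\text{Id}-A)$ with $A=\chi_{J_n}P_{GUE(n)}\chi_{J_n}$ for $J_n=\bigcup_{j=1}^{k}I_j$, $I_j=[y_j,y_j+a_j]$, and introduce a block-diagonal comparison operator $B=\sum_{j=1}^{k}B_j$ on $L^2(J_n)$ whose block $B_j$ lives on $L^2(I_j)$ with $\det(\text{Id}-B_j)=D_n(a_j\sqrt{4-y_j^2}/2)$. Concretely, with $\delta_j=a_j\rho_{sc}(y_j)=a_j\sqrt{4-y_j^2}/(2\pi)$, let $U_j:L^2([0,2\pi\delta_j])\to L^2(I_j)$ be the unitary dilation $(U_jf)(y)=\sqrt{n\rho_{sc}(y_j)}\,f(n\rho_{sc}(y_j)(y-y_j))$ and set $B_j=U_j(\chi_{[0,2\pi\delta_j]}P_n\chi_{[0,2\pi\delta_j]})U_j^{*}$; this is exactly the CUE half of the pair from the proof of Lemma~\ref{lem23}. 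Since the $I_j$ are pairwise disjoint, $B$ is block-diagonal and $\det(\text{Id}-B)=\prod_j D_n(a_j\sqrt{4-y_j^2}/2)$, which is the product appearing on the right-hand side of the claim.

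The first inequality in Lemma~\ref{lem15} reduces the target bound to showing that both $|A-B|_2^{2}\|(\text{Id}-B)^{-1}\|^{2}$ and $|\Tr((A-B)(\text{Id}-B)^{-1})|$ are $o((\ln n)^{-1})$. Writing $A-B=\sum_j(\chi_{I_j}P_{GUE(n)}\chi_{I_j}-B_j)+\sum_{i\neq j}\chi_{I_i}P_{GUE(n)}\chi_{I_j}$: on each diagonal block, equations \eqref{35}--\eqref{36} in the proof of Lemma~\ref{lem23} give $|\chi_{I_j}P_{GUE(n)}\chi_{I_j}-B_j|_2=O((\ln n)^{3/2}/n)$ and $|\Tr(\chi_{I_j}P_{GUE(n)}\chi_{I_j}-B_j)|=O((\ln n)^{3/2}/n)$. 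For the off-diagonal cross terms, the Christoffel--Darboux identity \eqref{29} together with the bulk bound $|\psi_m(\xi)|=O(n^{-1/4})$ for $|m-n|\leq 1$ gives $|K^{GUE(n)}(x,y)|=O(1/|x-y|)$; since $|x-y|\geq|y_i-y_j|-\max(a_i,a_j)\geq \varepsilon_0/(2\ln n)$ for $x\in I_i,\ y\in I_j,\ i\neq j$ (by the separation and size hypotheses), we have $|K^{GUE(n)}(x,y)|=O(\ln n)$, and combined with $|I_j|=O((\ln n)^{1/2}/n)$ this yields $|\chi_{I_i}P_{GUE(n)}\chi_{I_j}|_2^{2}=O((\ln n)^{3}/n^{2})$. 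Summing over the $O(k^2)$ pairs, $|A-B|_2^{2}=O((\ln n)^{3}/n^{2})$.

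Because $B$ is block-diagonal, $\|(\text{Id}-B)^{-1}\|=\max_j\|(\text{Id}-B_j)^{-1}\|$, and Lemma~\ref{lem18} gives $\|(\text{Id}-B_j)^{-1}\|\leq e^{1-\Tr B_j}\det(\text{Id}-B_j)^{-1}$. The rescaling ensures $\Tr B_j=n\delta_j+O((\ln n)^{3/2}/n)=\Theta((\ln n)^{1/2})$, using the hypothesis $\sqrt{4-y_j^2}/S(I)\leq 1+C_0/\ln n$ and $a_j>G_n(-C_0)/S(I)$ to secure a uniform lower bound of the same order, while \eqref{1} applied at $\alpha_j=a_j\sqrt{4-y_j^2}/2=\Theta((\ln n)^{1/2}/n)$ (in the spirit of Lemma~\ref{obs}) gives $\det(\text{Id}-B_j)^{-1}=O(n(\ln n)^{1/2})$. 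Hence $\|(\text{Id}-B)^{-1}\|=O(n(\ln n)^{1/2}e^{-c(\ln n)^{1/2}})$ for some $c>0$ depending only on $\varepsilon_0,C_0,I$, so $|A-B|_2^{2}\|(\text{Id}-B)^{-1}\|^{2}=O((\ln n)^{4}e^{-2c(\ln n)^{1/2}})$. The second inequality in Lemma~\ref{lem15} together with $|B|_2^{2}\leq\Tr B=O((\ln n)^{1/2})$ then gives $|\Tr((A-B)(\text{Id}-B)^{-1})|=O((\ln n)^{3/2}/n)+O((\ln n)^{9/4}e^{-c(\ln n)^{1/2}})$, and both quantities are $o((\ln n)^{-1})$, which completes the proof.

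The main obstacle I expect is arranging the cancellation inside $\|(\text{Id}-B)^{-1}\|$ between the polynomial factor $\det(\text{Id}-B_j)^{-1}=O(n(\ln n)^{1/2})$ and the stretched-exponential factor $e^{-\Tr B_j}=e^{-\Theta((\ln n)^{1/2})}$; without this cancellation the Hilbert--Schmidt perturbation in Lemma~\ref{lem15} would be too crude. It is precisely this cancellation that forces the hypothesis pinning each $a_j$ to the scale $\Theta((\ln n)^{1/2}/n)$ of the largest gap, while the separation hypothesis $|y_i-y_j|\geq\varepsilon_0(\ln n)^{-1}$ is what makes the off-diagonal cross terms negligible.
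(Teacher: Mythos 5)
Your proposal is correct and follows essentially the same route as the paper: the same block comparison operator $B=\sum_j B_j$ with $\det(\operatorname{Id}-B_j)=D_n(a_j\sqrt{4-y_j^2}/2)$, the same use of Lemma~\ref{lem15} reduced to estimating $|A-B|_2$, $\Tr(A-B)$ and $\|(\operatorname{Id}-B)^{-1}\|$, the same off-diagonal kernel bound via the separation hypothesis, and the same resolvent bound via Lemma~\ref{lem18}. The only stylistic differences are that you import the diagonal-block estimates by citing \eqref{35}--\eqref{36} from the proof of Lemma~\ref{lem23} (rather than re-deriving them from Lemma 3.4 of \cite{BB} as the paper does) and that you bound $|B|_2^2\leq\Tr B$ using $0\leq B\leq\operatorname{Id}$ instead of integrating the kernel directly; both give the same orders.
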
\begin{proof} We use $f=O(g)$ to denote $|f|\leq Cg$ for a constant $C$ depending only on $\varepsilon_0,C_0,k,I.$ As $|y_i-y_j|\geq\varepsilon_0(\ln n)^{-1}>\varepsilon_0(2\ln n)^{-1}$ for $i\neq j$, if $1\leq j\leq k,$ then $ y_0\leq y_j<y_j+a_j<y_j+\varepsilon_0(2\ln n)^{-1}<y_j+|y_{k+1}-y_j|= y_{k+1}$, and thus $[y_j,y_j+a_j]\subset[y_0,y_{k+1}]=I. $ If $1\leq i<j\leq k,$ then by assumption $a_i,a_j\in(0,\varepsilon_0(2\ln n)^{-1})\subset(0,|y_i-y_j|), $ and thus $[y_i,y_i+a_i]\cap[y_j,y_j+a_j]=\emptyset. $
		Therefore, we have $J:=\cup_{j=1}^k[y_j,y_j+a_j]$ is a disjoint union and $J\subset I. $ Let's denote $$A=\chi_J P_{GUE(n)}\chi_J,\,\,\,\,A_{i,j}=\chi_{ [y_i,y_i+a_i]} P_{GUE(n)}\chi_{[y_j,y_j+a_j]},$$ then we have $$A=\sum\limits_{i=1}^k\sum\limits_{j=1}^k A_{i,j}$$  and
		\begin{align}\label{66}&\mathbb{P}(\xi^{(n)}(\cup_{j=1}^k[y_j,y_j+a_j])=0)=\mathbb{P}(\xi^{(n)}(J)=0)=\det(\text{Id}-A).
		\end{align}Let  $B_j$ be the integral operator with
		kernel\begin{align*}&B_j (u,v)={2\pi}\rho_{sc}(y_j)K_{(y_j,y_j+a_j)}^{CUE(n)}\left({2\pi}\rho_{sc}(y_j)u,{2\pi}\rho_{sc}(y_j)v\right),
		\end{align*} where $K^{CUE(n)}(x,y)$ is the kernel defined in \eqref{kern}.
		Let's denote \begin{align*}&B=\sum\limits_{j=1}^kB_j.
		\end{align*}As $ 0<a_j\sqrt{4-y_j^2}/2\leq a_j<\varepsilon_0(2\ln n)^{-1}<1$, we have \begin{align}\label{68}\det(\text{Id}-B_j)&=\mathbb{P}^{CUE(n)}(\theta_i\not\in [0,2\pi\rho_{sc}(y_j)a_j],1\leq i\leq n)\\ \nonumber&=D_n(\pi\rho_{sc}(y_j)a_j)=D_n\left(a_j\sqrt{4-y_j^2}/2\right),
		\end{align}and \begin{align}\label{69}\det(\text{Id}-B)=\prod_{j=1}^k\det(\text{Id}-B_j)=\prod_{j=1}^kD_n\left(a_j\sqrt{4-y_j^2}/2\right).
		\end{align}Now we need to compare the Fredholm determinants, the key point is to estimate $|A-B|_2,\ \Tr(A-B),\ \|(\text{Id}-B)^{-1}\|.$ Comparing the support of the kernels, we have\begin{align}\label{70}&|A-B|_2^2=\sum\limits_{j=1}^k|A_{j,j}-B_j|_2^2+\sum\limits_{i\neq j}|A_{i,j}|_2^2,\ \Tr(A-B)=\sum\limits_{j=1}^k\Tr(A_{j,j}-B_j).
		\end{align}For $x\in [y_i, y_i+a_i]\subset I,\ y\in [y_j, y_j+a_j]\subset I,\ i\neq j,\ (1\leq i,j\leq k)$, we have\begin{align*}&|x-y|\geq |y_i-y_j|-\max(a_i,a_j)\geq \varepsilon_0(\ln n)^{-1}-\varepsilon_0(2\ln n)^{-1}=\varepsilon_0(2\ln n)^{-1}.
		\end{align*}From the Plancherel-Rotach asymptotics for the Hermite polynomials
		(Theorem 8.22.9 in \cite{Sz}) for any nonnegative integer $j,\ \psi_{n-j}(\sqrt{n}x)$ is $O(n^{-1/4}),$ uniformly in $x\in I.$ Consequently, if $|x-y| \geq \varepsilon_0(2\ln n)^{-1},\ x,y\in I,$ from \eqref{29}, we have  \begin{align*}&|K^{GUE(n)}(x,y)|=\sqrt{n}\frac{O(n^{-1/4})O(n^{-1/4})}{|x-y|}=\frac{O(1)}{|x-y|}\leq \frac{O(1)}{\varepsilon_0(2\ln n)^{-1}}=O(\ln n).\end{align*}Using this and \eqref{31}, for $i\neq j$ we have (recall that $0<a_i<G_n(C_0)/S(I)$)\begin{align}\label{71}|A_{i,j}|_2^2=&\int_{[y_i, y_i+a_i]}dx\int_{[y_j, y_j+a_j]}|K^{GUE(n)}(x,y)|^2dy\\ \nonumber =&\int_{[y_i, y_i+a_i]}dx\int_{[y_j, y_j+a_j]}O((\ln n)^2)dy=a_ia_jO((\ln n)^2)\\ \nonumber\leq&(G_n(C_0)/S(I))^2O((\ln n)^2)=O\left(\frac{\ln n}{n^2}\right)O((\ln n)^2)=O\left(\frac{(\ln n)^3}{n^2}\right).
		\end{align}Since $a_j=O(\varepsilon_0(2\ln n)^{-1})=o(1),\ 0<S(I)\leq \sqrt{4-y_j^2}={2\pi}\rho_{sc}(y_j)\leq 2,$ and the kernel of $A_{j,j}$ is $A_{j,j} (u,v)=K_{(y_j,y_j+a_j)}^{GUE(n)}(u,v),
		$ by Lemma 3.4 in \cite{BB} we have\begin{align*}&\frac{1}{n\rho_{sc}(x)}K^{GUE(n)}(x,y)-\frac{\sin(n\pi \rho_{sc}(x)(x-y) )}{n\pi \rho_{sc}(x)(x-y)}=O\left(\frac{1}{n}\right)+O\left(a_j\right)+O\left(na_j^2\right),\\
			&\frac{2\pi}{n}K^{CUE(n)}({2\pi}\rho_{sc}(y_j)x,{2\pi}\rho_{sc}(y_j)y)-\frac{\sin(n\pi \rho_{sc}(y_j)(x-y) )}{n\pi \rho_{sc}(y_j)(x-y)}=O\left(\frac{a_j}{n}\right),\end{align*}uniformly for $x,y\in[y_j,y_j+a_j].$ Thus the difference between the two kernels $A_{j,j}$ and $B_j$ is $O(1+n^2a_j^2),$ integrating on a domain $[y_j,y_j+a_j]^2 $ of area $a_j^2,$ we have \begin{align*}&|A_{j,j}-B_j|_2^2=O((1+n^2a_j^2)^2)a_j^2=O(a_j^2+n^4a_j^6);\end{align*} and integrating on the diagonal $\{x=y\in[y_j,y_j+a_j]\} $ yields\begin{align*}&|\Tr(A_{j,j}-B_j)|=O((1+n^2a_j^2))a_j=O((a_j^2+n^4a_j^6)^{1/2}).\end{align*}Using $0<a_j<G_n(C_0)/S(I)$ and  \eqref{31}, we have \begin{align}\label{76}&a_j^2\leq (G_n(C_0)/S(I))^2=O\left(\frac{\ln n}{n^2}\right),\end{align}thus\begin{align}\label{72}&|A_{j,j}-B_j|_2^2=O(a_j^2+n^4a_j^6)=O\left(\frac{\ln n}{n^2}+\frac{(\ln n)^3}{n^2}\right)=O\left(\frac{(\ln n)^3}{n^2}\right),\end{align}and\begin{align}\label{73}&|\Tr(A_{j,j}-B_j)|=O((a_j^2+n^4a_j^6)^{1/2})=O\left(\frac{(\ln n)^{3/2}}{n}\right).\end{align}Using \eqref{70}\eqref{71}\eqref{72}\eqref{73}, we conclude that\begin{align}\label{74}&|A-B|_2^2=O\left(\frac{(\ln n)^3}{n^2}\right),\ |\Tr(A-B)|=O\left(\frac{(\ln n)^{3/2}}{n}\right).
		\end{align}Recall the formula  \eqref{kern}, we have $K^{CUE(n)}(x,x)=\dfrac{n}{2\pi}$ and $$|K^{CUE(n)}(x,y)|=O\left(\dfrac{n}{1+n|x-y|}\right),\,\,\, |x-y|\leq 2.$$ Therefore, by definition of $B_j$, we have $$B_j(u,u)={2\pi}\rho_{sc}(y_j)\dfrac{n}{2\pi}={n\rho_{sc}(y_j)}, \,\, u\in(y_j,y_j+a_j)$$ and\begin{align}\label{75}&\Tr B_j=\int_{y_j}^{y_j+a_j} B_j(u,u)du={na_j\rho_{sc}(y_j)}=na_i\sqrt{4-y_i^2}/(2\pi);
		\end{align}since $0<2\pi \rho_{sc}(y_j)a_j=\sqrt{4-y_j^2}a_j\leq2a_j<2$ and $0<S(I)\leq \sqrt{4-y_j^2}={2\pi}\rho_{sc}(y_j)\leq 2$, thus we have the off-diagonal estimate $$|B_j(u,v)|=O\left(\dfrac{n}{1+n|u-v|}\right),\,\, u,v\in(y_j,y_j+a_j).$$Therefore, we have  \begin{align*}&|B_j|_2^2=\int_{y_j}^{y_j+a_j}\int_{y_j}^{y_j+a_j} |B_j(u,v)|^2dudv\\=&\int_{y_j}^{y_j+a_j}\int_{y_j}^{y_j+a_j} O\left(\dfrac{n^2}{(1+n|u-v|)^2}\right)dudv\\=&\int_{y_j}^{y_j+a_j}O\left(\int_{\mathbb{R}} \dfrac{n^2}{(1+n|u-v|)^2}du\right)dv\\=&\int_{y_j}^{y_j+a_j}O\left(n\right)dv=O(na_j)=O\left((\ln n)^{1/2}\right),
		\end{align*}here we used \eqref{76}. Therefore,  we have \begin{align}\label{77}&|B|_2^2=\sum\limits_{j=1}^k|B_j|_2^2=O\left((\ln n)^{1/2}\right).
		\end{align} Now we estimate $\|(\text{Id}-B)^{-1}\|.$ We have $\|(\text{Id}-B)^{-1}\|=(1-\lambda_1(B))^{-1} $ where $\lambda_1(B) $ is the largest eigenvalue of $B$. Similar to the CUE case as in \eqref{simd}, we know that $\lambda_1(B)\leq\lambda_1(B_i)$ for some $1\leq i\leq k$ or $\lambda_1(B)=0.$ For every $1\leq i\leq k$, by Lemma \ref{lem18}, \eqref{68} and \eqref{75},  we have $$1-\lambda_1(B_i) \geq \det(\text{Id} -B_i)e^{\Tr B_i-1}=D_n\left(a_i\sqrt{4-y_i^2}/2\right)e^{na_i\sqrt{4-y_i^2}/(2\pi)-1}.$$
		By \eqref{31}\eqref{21} and $32>\pi^2$,  there exists a constant $N_{5,0}>0$ such that $$\pi(\ln n)^{\frac{1}{2}}< nG_n(-C_0),$$
		and
		$$n(\ln n)^{-\frac{1}{2}} D_n((1+C_0/\ln n)G_n(C_0)/2)>e^{c_0-3C_0}$$ and  $G_n(C_0)<1,\ C_0<\ln n $ for $n>N_{5,0}.$ By assumption, $a_i<G_n(C_0)/S(I)$ and\\ $\sqrt{4-y_i^2}/S(I)\leq 1+C_0(\ln n)^{-1}$, we have $a_i\sqrt{4-y_i^2}<(1+C_0/\ln n)G_n(C_0). $ Since\\ $a_i>G_n(-C_0)/S(I),$ $\sqrt{4-y_i^2}/S(I)\geq 1$  for $y_i\in I$, we have $a_i\sqrt{4-y_i^2}>G_n(-C_0). $ Thus if $n>N_{5,0},\ 1\leq i\leq k$, we have \begin{align*}1-\lambda_1(B_i) &\geq D_n\left(a_i\sqrt{4-y_i^2}/2\right)e^{na_i\sqrt{4-y_i^2}/(2\pi)-1}\\ &\geq D_n\left((1+C_0/\ln n)G_n(C_0)/2\right)e^{nG_n(-C_0)/(2\pi)-1}\\&\geq n^{-1}(\ln n)^{\frac{1}{2}}e^{c_0-3C_0}e^{(\ln n)^{\frac{1}{2}}/2-1}.\end{align*}Now we always assume $n>N_{5,0},$ then similar to the CUE case, we have 
\begin{align}\label{78}&\|(\text{Id}-B)^{-1}\|=(1-\lambda_1(B))^{-1}\leq\max_{1\leq i\leq k}(1-\lambda_1(B_i))^{-1}+1 \\ \nonumber\leq& n(\ln n)^{-\frac{1}{2}}e^{3C_0-c_0-(\ln n)^{\frac{1}{2}}/2+1}+1=O\left(n(\ln n)^{-\frac{1}{2}}e^{-(\ln n)^{\frac{1}{2}}/2}\right).\end{align}By Lemma \ref{lem15} and \eqref{74}\eqref{77}\eqref{78}, we conclude that\begin{align*}b_2:=&|\Tr((A-B)(\text{Id}-B)^{-1})|\leq|\Tr(A-B)|+|A-B|_2|B|_2\|(\text{Id}-B)^{-1}\|\\ \leq& O\left(\frac{(\ln n)^{3/2}}{n}\right)+O\left(\frac{(\ln n)^{3/2+1/4}}{n}\right)O\left(n(\ln n)^{-\frac{1}{2}}e^{-(\ln n)^{\frac{1}{2}}/2}\right)\\ =& O\left(\frac{(\ln n)^{3/2}}{n}\right)+O\left({(\ln n)^{5/4}}e^{-(\ln n)^{\frac{1}{2}}/2}\right)=O\left({(\ln n)^{-2}}\right),\end{align*}that\begin{align*}b_3:=&|B-A|_2^2\|(\text{Id}-B)^{-1}\|^2 \leq O\left(\frac{(\ln n)^{3}}{n^2}\right)O\left(n^2(\ln n)^{-1}e^{-(\ln n)^{\frac{1}{2}}}\right)\\ =& O\left({(\ln n)^{2}}e^{-(\ln n)^{\frac{1}{2}}}\right)=O\left({(\ln n)^{-2}}\right).\end{align*}Therefore, by Lemma \ref{lem15} again, we have \begin{align*}1-b_3=&1-|B-A|_2^2\|(\text{Id}-B)^{-1}\|^2\\ \leq&\exp(\Tr(A-B)(\text{Id}-B)^{-1})\det(\text{Id} -A)/\det(\text{Id}-B)\\ \leq& e^{b_2}\det(\text{Id} -A)/\det(\text{Id}-B).\end{align*}Thus there exists a constant $N_{5}>N_{5,0}$ such that $b_2<(2\ln n)^{-1}<1,\ b_3<(2\ln n)^{-1}<1$ for $N>N_5$ and\begin{align}\label{79} \det(\text{Id} -A)/\det(\text{Id}-B)&\geq e^{-b_2}(1-b_3)\geq (1-b_2)(1-b_3)\\ \nonumber &\geq (1-(2\ln n)^{-1})^2\geq 1-(\ln n)^{-1},\ \forall\ n>N_5.\end{align}Now the result follows from \eqref{66}\eqref{69} and \eqref{79}.\end{proof}
	
	Now we prove Lemma \ref{lem24}.\begin{proof}Let $u_0=0$ and \begin{align}\label{80}C_0=\max\limits_{1\leq j\leq k}(|x_j|+|u_j|),\ \varepsilon_1=\min\limits_{0\leq i<j\leq k}\big||u_i|-|u_j|\big|,\ \varepsilon_0=\varepsilon_1S(I)^2/(2+4\varepsilon_1).\end{align} Using $l_n=(b_*/a_*-1)\ln n\to+\infty$ and \eqref{31}, there exists a constant $N_{6,0}>2$ such that $l_n>C_0$ and $0<4(\ln n)^{\frac{1}{2}}/n< G_n(-C_0)\leq G_n(C_0)<8(\ln n)^{\frac{1}{2}}/n $ for $n>N_{6,0}$. Let's denote \begin{align}\label{81}y_j=\gamma_n(u_j),\ a_j=G_n(x_j)/S(I),\ \ \forall\ n>N_{6,0}.\end{align} Then we have $y_j\in (a,b)$ for $1\leq j\leq k,\ n>N_{6,0}$ (See the range of $\gamma_n $ in {\bf Case 1}-{\bf Case 3}). Now we need to check all  assumptions in Lemma \ref{lem25}.
		
		(a) Since $u_j\neq 0$, we have $C_0>0$. Since $|u_1|,\cdots,|u_k|$ are nonzero and all distinct in all the 3 cases, we have $ \varepsilon_1>0.$ Using this and $0<S(I)\leq 2, $ we have $$0<\varepsilon_0=\varepsilon_1S(I)^2/(2+4\varepsilon_1)\leq 4\varepsilon_1/(2+4\varepsilon_1)<1 .$$
		
		(b) By \eqref{57}\eqref{63}, we have $(\gamma_n(u))^2=4-(1+|u|/\ln n)^2S(I)^2 $. Thus by \eqref{81}, we have $y_j^2=(\gamma_n(u_j))^2=4-(1+|u_j|/\ln n)^2S(I)^2$ and \begin{align}\label{82}\sqrt{4-y_j^2}=(1+|u_j|/\ln n)S(I),\ \sqrt{4-y_j^2}a_j=(1+|u_j|/\ln n)G_n(x_j).\end{align} For $1\leq i<j\leq k,\ n>N_{6,0}, $ we have $y_i,y_j\in(a,b)\subset(-2,2),\ |y_i+y_j|<4$ and\begin{align*}&4|y_i-y_j|\geq |y_i+y_j|\cdot|y_i-y_j|=\left|y_i^2-y_j^2\right|=\left|(\gamma_n(u_i))^2-(\gamma_n(u_j))^2\right|\\=&
			\left|(1+|u_j|/\ln n)^2-(1+|u_i|/\ln n)^2\right|S(I)^2\\=&
			\big||u_j|-|u_i|\big|/\ln n \cdot (2+|u_j|/\ln n+|u_i|/\ln n)S(I)^2\\ \geq &
			\big||u_j|-|u_i|\big|/\ln n\cdot2S(I)^2\geq \varepsilon_1/\ln n\cdot2S(I)^2,\end{align*}thus we have $$|y_i-y_j|\geq \varepsilon_1S(I)^2/(2\ln n)=\varepsilon_0 (1+2\varepsilon_1)(\ln n)^{-1}\geq \varepsilon_0 (\ln n)^{-1}.$$
		Actually, the similar arguments apply to the end points $y_0$ and $y_{k+1}$ and we finally have $|y_i-y_j|\geq \varepsilon_0 (\ln n)^{-1}$ for
		$0\leq i<j\leq k+1$.
		
		(c) For every $1\leq i\leq k$ and $n>N_{6,0}, $ we have $|u_i|\leq C_0$, and by \eqref{82}, we have\begin{align*}&\sqrt{4-y_i^2}/S(I)=(1+|u_i|/\ln n)S(I)/S(I)=1+|u_i|/\ln n\leq 1+C_0/\ln n.\end{align*}
		
		
		(d) For every $1\leq j\leq k$ and $n>N_{6,0}, $ since $a_j=G_n(x_j)/S(I),\ S(I)>0,\ |x_j|<|x_j|+|u_j|\leq C_0$ and $G_n$ is increasing, we have $0<4(\ln n)^{\frac{1}{2}}/n< G_n(-C_0)< G_n(x_j)=a_jS(I)< G_n(C_0)<8(\ln n)^{\frac{1}{2}}/n $ and thus $a_j\in(G_n(-C_0)/S(I),G_n(C_0)/\\S(I)) \cap(0,8(\ln n)^{\frac{1}{2}}/(nS(I))).$ Since $\varepsilon_0>0,\ S(I)>0,$ there exists a constant $N_{6,1}>N_{6,0}$ such that $16(\ln n)^{\frac{3}{2}}/n<\varepsilon_0S(I) $ for $n>N_{6,1}.$ Thus $8(\ln n)^{\frac{1}{2}}/(nS(I))<\varepsilon_0(2\ln n)^{-1} $ and we have $a_j\in(0,\varepsilon_0(2\ln n)^{-1})$ for $1\leq j\leq k$ and $n>N_{6,1}. $
		
		From the statements (a)-(d), we know that $\varepsilon_0,\ C_0 $ defined in \eqref{80} and \eqref{81} satisfy all the assumptions in Lemma \ref{lem25} for $n>N_{6,1}$. Thus $[y_i,y_i+G_n(x_i)/S(I)]\cap [y_j,y_j+G_n(x_j)/S(I)]=[y_i,y_i+a_i]\cap[y_j,y_j+a_j]=\emptyset$ for every $1\leq j\leq k$ and $n>N_{6,1},$ then $(y_1,\cdots,y_n)\in A_n$ (recall \eqref{andd}) for $n>N_{6,1}$ and we can use the notation \eqref{58} and formula \eqref{59} in this case. For $n>N_{6,1}$, by \eqref{59} we have \begin{align}\label{90}&(\ln n)^{-k}\phi_{k,n}(y_1,\cdots,y_k)\geq (2n)^k(2\ln n)^{-\frac{k}{2}}\mathbb{P}(\xi^{(n)}(I_{n,k})=0)\\ \nonumber&-(2n)^k(2\ln n)^{-\frac{k}{2}}\sum_{j=0}^k\mathbb{P}(\xi^{(n)}(I_{n,k})=\xi^{(n)}(J_{n,k,j})=0).
		\end{align} As in the CUE case, we claim that $$(2n)^k(2\ln n)^{-\frac{k}{2}}\mathbb{P}(\xi^{(n)}(I_{n,k})=\xi^{(n)}(J_{n,k,j})=0) \to 0.$$  Since $a_j=G_n(x_j)/S(I)$, by \eqref{58} we have $I_{n,k}=\cup_{j=1}^k[y_j,y_j+a_j]$. Let\\ $d_0:=G_n(-C_0)/S(I),$ then we have $0<d_0<a_j<\varepsilon_0(2\ln n)^{-1}$ for $1\leq j\leq k$ and $n>N_{6,1}. $ Let $z_j'=(z_j+z_{j+1})/2$ for $0\leq j\leq k$ where $z_j\ (0\leq j\leq k+1)$ is the increasing rearrangement of $y_j\ (0\leq j\leq k+1)$. Since $y_j\in I\ (0\leq j\leq k+1)$, we have $z_j\in I\ (0\leq j\leq k+1),\ z_j'\in I\ (0\leq j\leq k)$ and \begin{align*}&\min_{0\leq i\leq k+1}|z_j'-z_i|=z_{j+1}-z_j'=z_j'-z_j=(z_{j+1}-z_j)/2\\ &\geq\min_{0\leq i<l\leq k+1}|y_i-y_l|/2\geq \varepsilon_0(2\ln n)^{-1}>d_0\end{align*}for $0\leq j\leq k$ and $n>N_{6,1}. $ Thus $ [z_j',z_j'+d_0]\cap[z_i,z_i+d_0]=\emptyset$ and  $ [z_j',z_j'+d_0]\subset [z_j,z_{j+1}]=J_{n,k,j}$ for $0\leq j\leq k,\ 0\leq i\leq k$ and $n>N_{6,1}. $ Since $d_0<a_j$,  we have $I_{n,k}\supseteq\cup_{j=1}^k[y_j,y_j+d_0]=\cup_{j=1}^k[z_j,z_j+d_0] $, and $I_{n,k}\cup J_{n,k,j}\supseteq [z_j',z_j'+d_0]\cup \left(\cup_{i=1}^k[z_i,z_i+d_0]\right), $ and the right hand side is a disjoint union for $n>N_{6,1}$. By \eqref{16} we have \begin{align*}0\leq&\mathbb{P}(\xi^{(n)}(I_{n,k})=\xi^{(n)}(J_{n,k,j})=0)=\mathbb{P}(\xi^{(n)}(I_{n,k}\cup J_{n,k,j})=0)\\ \leq&\mathbb{P}([z_j',z_j'+d_0]\cup \left(\cup_{i=1}^k[z_i,z_i+d_0]\right))=0)\\ \leq&\mathbb{P}(\xi^{(n)}([z_j',z_j'+d_0])=0)\prod_{i=1}^k\mathbb{P}(\xi^{(n)}([z_i,z_i+d_0])=0)\leq p_{n,k}^{k+1},\end{align*}where \begin{align*}p_{n,k}:=\sup_{z\in I}\mathbb{P}(\xi^{(n)}([z,z+d_0])=0)=\sup_{z\in I}\mathbb{P}(\xi^{(n)}([z,z+G_n(-C_0)/S(I)])=0).
		\end{align*}By  \eqref{21} and \eqref{32}, there exists a constant $N_{6,2}>N_{6,1}$ such that\begin{align*}&n(2\ln n)^{\frac{1}{2}}\sup_{z\in I}\Big(\mathbb{P}(\xi^{(n)}([z,z+G_n(-C_0)/S(I)])=0)\\ &-D_n(\sqrt{4-z^2}/S(I) \cdot G_n(-C_0)/2)\Big)<1,
		\end{align*}and \begin{align*}&n(2\ln n)^{-\frac{1}{2}}D_n(  G_n(-C_0)/2)<e^{c_0+C_0+1}.
		\end{align*} Then we further have\begin{align*}&p_{n,k}=\sup_{z\in I}\mathbb{P}(\xi^{(n)}([z,z+G_n(-C_0)/S(I)])=0)\\ &\leq\sup_{z\in I}\Big(\mathbb{P}(\xi^{(n)}([z,z+G_n(-C_0)/S(I)])=0)-D_n(\sqrt{4-z^2}/S(I) \cdot G_n(-C_0)/2)\Big)\\&+\sup_{z\in I}D_n(\sqrt{4-z^2}/S(I) \cdot G_n(-C_0)/2)\\ &\leq n^{-1}(2\ln n)^{-\frac{1}{2}}+D_n( G_n(-C_0)/2)\leq n^{-1}(2\ln n)^{\frac{1}{2}}+n^{-1}(2\ln n)^{\frac{1}{2}}e^{c_0+C_0+1},
		\end{align*} where we used the fact that $D_n(\alpha)$ is decreasing.  Thus, for every $0\leq j\leq k$, we have,  \begin{align*}&\limsup_{n\to+\infty}(2n)^k(2\ln n)^{-\frac{k}{2}}\mathbb{P}(\xi^{(n)}(I_{n,k})=\xi^{(n)}(J_{n,k,j})=0)\\ \leq &\limsup_{n\to+\infty}(2n)^k(2\ln n)^{-\frac{k}{2}}p_{n,k}^{k+1}\\ \leq&\limsup_{n\to+\infty}(2n)^k(2\ln n)^{-\frac{k}{2}}\left(n^{-1}(2\ln n)^{\frac{1}{2}}+n^{-1}(2\ln n)^{\frac{1}{2}}e^{c_0+C_0+1}\right)^{k+1}\\ \leq&\limsup_{n\to+\infty}2^kn^{-1}(2\ln n)^{\frac{1}{2}}\left(1+e^{c_0+C_0+1}\right)^{k+1}=0,
		\end{align*} which completes the claim. 
		
		Now using \eqref{21}\eqref{82}\eqref{90} and Lemma \ref{lem25}, we have \begin{align*}&\liminf_{n\to+\infty}(\ln n)^{-k}\phi_{k,n}(y_1,\cdots,y_k)\\\geq& \liminf_{n\to+\infty}(2n)^k(2\ln n)^{-\frac{k}{2}}\mathbb{P}(\xi^{(n)}(I_{n,k})=0)\\=&\liminf_{n\to+\infty}(2n)^k(2\ln n)^{-\frac{k}{2}}\mathbb{P}(\xi^{(n)}(\cup_{j=1}^k[y_j,y_j+a_j])=0) \\ \geq &\liminf_{n\to+\infty}(2n)^k(2\ln n)^{-\frac{k}{2}}(1-(\ln n)^{-1})\prod_{j=1}^kD_n\left(a_j\sqrt{4-y_j^2}/2\right)\\=&\liminf_{n\to+\infty}(2n)^k(2\ln n)^{-\frac{k}{2}}\prod_{j=1}^k D_n((1+|u_j|/\ln n)G_n(x_j)/2))\\=&2^k\prod_{j=1}^k\left(\lim_{n\to+\infty}n(2\ln n)^{-\frac{1}{2}}D_n((1+|u_j|/\ln n)G_n(x_j)/2)\right)\\=&2^k\prod_{j=1}^k\left(e^{c_0-x_j-2|u_j|}\right)=2^ke^{\sum_{j=1}^k(c_0-x_j-2|u_j|)}.
		\end{align*}Now  Lemma \ref{lem24} follows from the definition of $y_j$ in \eqref{81}.\end{proof}

 \end{document}